\documentclass[11pt,reqno]{amsart}
\usepackage[a4paper, hmargin={2.7cm,2.7cm},vmargin={3.3cm,3.3cm}]{geometry}
\usepackage[english]{babel}
\usepackage{color}
\usepackage[noadjust]{cite}
\usepackage{amsmath}
\usepackage{amssymb}
\usepackage{amsfonts}
\usepackage{amsthm}
\usepackage{enumerate}
\usepackage{amsthm}
\usepackage{dsfont}
\usepackage{xcolor}

\usepackage[textwidth=20mm]{todonotes}
\usepackage[hypertexnames=false]{hyperref}
\usepackage{zref-clever}
\zcsetup{capfirst}
\usepackage{keytheorems}
\usepackage{commath}
\usepackage{esint}
\usepackage{etoolbox}

\usepackage{amssymb}   
\usepackage{amsthm}    
\usepackage{thmtools}  
\usepackage{mathtools} 
\usepackage{mathrsfs}  
\usepackage{commath}
\usepackage{bbm}
\usepackage[textwidth=20mm]{todonotes}

\numberwithin{equation}{section}

\newkeytheorem{theorem}[
  name=Theorem,
  parent=section,
  refname={theorem,theorems},
  Refname={Theorem,Theorems}
]

\newkeytheorem{lemma}[
  name=Lemma,
  sibling=theorem,
  refname={lemma,lemmaer},
  Refname={Lemma,Lemmaer}
]

\newkeytheorem{proposition}[
  name=Proposition,
  sibling=theorem,
  refname={proposition,propositions},
  Refname={Proposition,Propositions}
]

\newkeytheorem{corollary}[
  name=Corollary,
  sibling=theorem,
  refname={corollary,corollaries},
  Refname={Corollary,Corollaries}
]

\newkeytheorem{definition}[
  name=Definiton,
  style=definition,
  refname={definition,definitions},
  Refname={Definition,definitions}
]

\newkeytheorem{example}[
  name=Example,
  style=definition,
  refname={example,examples},
  Refname={Example,}
]

\usepackage[all]{xy}

\newcommand{\N}{\mathbb{N}}
\newcommand{\Z}{\mathbb{Z}}

\newcommand{\R}{\mathbb{R}}
\newcommand{\C}{\mathbb{C}}

\newcommand\numberthis{\addtocounter{equation}{1}\tag{\theequation}}

\newcommand{\Hrho}{\mathcal{H}}
\newcommand{\Hpi}{\mathcal{H}}

\DeclareMathOperator{\Pf}{Pf}

\DeclareMathOperator{\ind}{ind}

\DeclareMathOperator{\Span}{span}
\DeclareMathOperator*{\esssup}{ess\,sup}

\newcommand{\anal}{\mathsf{C}}
\newcommand{\synt}{\mathsf{D}}
\newcommand{\fram}{\mathsf{S}}
\newcommand{\gram}{\mathsf{G}}

\def\XXint#1#2#3{{\setbox0=\hbox{$#1{#2#3}{\int}$ }
\vcenter{\hbox{$#2#3$ }}\kern-.6\wd0}}

\title[A Balian-Low theorem for nilpotent Lie groups]{A Balian-Low theorem for nilpotent Lie groups}

\author{Ulrik Enstad}
\address{Department of Mathematics,
University of Oslo,
Moltke Moes vei 35,
0851 Oslo.}
\email{ubenstad@math.uio.no}

\author{Jordy Timo van Velthoven}
\address{Faculty of Mathematics,
University of Vienna, 
Oskar-Morgenstern-Platz 1,
1090 Vienna, Austria}
\email{jordy-timo.van-velthoven@univie.ac.at}

\subjclass[2020]{22E27, 42C15, 43A80, 46B15}

\keywords{Balian-Low theorem, frame, nilpotent Lie group, Riesz sequence, square-integrable representation}

\begin{document}

\maketitle

\begin{abstract}
We prove strict necessary density conditions for smooth coherent frames and Riesz sequences in the orbit of a connected, simply connected, nilpotent Lie group. This extends previous results for homogeneous groups to general nilpotent Lie groups. 
The main novelty in our approach is that we prove a stability result for coherent frames and Riesz sequences based on deforming their representation rather than their index set.
\end{abstract}

\section{Introduction}
Let $N$ be a connected, simply connected, nilpotent Lie group and let $(\rho, \Hrho)$ be an irreducible representation of $N$ that is square-integrable modulo the center $Z(N)$. For a vector $\eta \in \Hrho$ and a discrete set $\Lambda \subseteq N/Z(N)$, the associated \emph{coherent system} is given by the family of vectors
\[
\rho(\Lambda) \eta = \big( \rho(x_{\lambda}) \eta \big)_{ \lambda \in \Lambda },
\]
where $(x_{\lambda} )_{\lambda \in \Lambda}$ are representatives in $N$ for the cosets $\Lambda \subseteq N / Z(N)$.\footnote{The vectors defining the coherent system $\rho(\Lambda) \eta$ do depend on the choice of representatives, but  their frame or Riesz sequence property is independent of this choice.}
The construction of such coherent systems forming a frame, Riesz sequence or orthonormal basis has been studied in, e.g., \cite{bedos2022smooth, oussa2024orthonormal, grochenig2018orthonormal, oussa2019compactly, enstad2022sufficient, bekka2004square}. Moreover, various results relating the spanning and reproducing properties of coherent systems and the density of their index set have been obtained in, e.g., \cite{papageorgiou2025counting, bedos2022smooth, fuehr2017density, grochenig2020balian}. See also \cite{velthoven2024density} for a survey on various of the aforementioned results.

Among the current key results on coherent frames and Riesz sequences are the necessary density conditions in terms of the Beurling density of their index set. Explicitly, the following density theorem was obtained in \cite{fuehr2017density, caspers2023overcompleteness}; see  \zcref{sec:coherent} for the relevant definitions. 

\begin{theorem}\label{prop:necessary_density_conditions}
Let $N$ be a connected, simply connected, nonabelian, nilpotent Lie group and let $(\rho, \Hrho)$ be an irreducible representation of $N$ that is square-integrable modulo the center $Z(N)$ of formal degree $d_{\rho} > 0$. Let $\Lambda \subseteq N/Z(N)$ be countable and let $\eta \in \Hrho$ be such that $\langle \eta, \rho (\cdot) \eta \rangle \in W(L^2)$. 
The following assertions hold:
\begin{enumerate}[(i)]
    \item If $\rho (\Lambda)\eta$ is a frame for $\Hrho$, then $D^-(\Lambda) \geq d_{\rho}$.
    \item If $\rho (\Lambda)\eta$ is a Riesz sequence in $\Hrho$, then $D^+(\Lambda) \leq d_{\rho}$.
\end{enumerate}
In particular, if $\rho(\Lambda) \eta$ is a Riesz basis, then $D^-(\Lambda) = D^+(\Lambda) = d_{\rho}$.
\end{theorem}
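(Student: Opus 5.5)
The plan is the Ramanathan--Steger/Landau comparison argument on the homogeneous space $G := N/Z(N)$. This is a connected, simply connected nilpotent Lie group, unimodular, with polynomial volume growth, so it admits an exhausting family of balls (or Følner sets) $B_R(x)$ with $\mu(\partial_r B_R)/\mu(B_R) \to 0$ uniformly in the center $x$. Square-integrability modulo the center gives the orthogonality relations
\[
\int_{G} |\langle \xi, \rho(x)\eta\rangle|^2 \, d\mu(\dot x) = d_\rho^{-1}\|\xi\|^2\|\eta\|^2 .
\]
Consequently, for a compact set $K\subseteq G$ the localization operator
\[
T_K = d_\rho \int_K \langle \cdot, \rho(x)\eta\rangle\rho(x)\eta \, d\mu(\dot x), \qquad \|\eta\|=1,
\]
is trace class with $\operatorname{tr}(T_K) = d_\rho\,\mu(K)$ and $0\le T_K\le I$. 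I will also show $\operatorname{tr}(T_K - T_K^2)=o(\mu(K))$ along Følner balls. To do this, write
\[
\operatorname{tr}(T_K^2) = d_\rho^2\int_K\int_K |\langle\eta,\rho(x^{-1}y)\eta\rangle|^2 ,
\]
and use that the mass of $|\langle\eta,\rho(\cdot)\eta\rangle|^2$ outside a large compact set is small, together with the Følner property. It follows that the number of eigenvalues of $T_{B_R}$ exceeding $1-\delta$ equals $d_\rho\mu(B_R)+o(\mu(B_R))$.

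For (i), let $\Phi$ be the synthesis operator and $\{\tilde e_\lambda\}$ the canonical dual frame. Define $P_R$ as the projection onto $\operatorname{span}\{\tilde e_\lambda : \lambda\in\Lambda\cap B_{R+r}(x)\}$, so that $\operatorname{rank}P_R\le \#(\Lambda\cap B_{R+r}(x))$. The key estimate is that for $\xi$ in the span $V$ of eigenvectors of $T_{B_R(x)}$ with eigenvalue above $1-\delta$, the function $\langle\xi,\rho(\cdot)\eta\rangle$ is concentrated on $B_R(x)$. Since $\langle \eta,\rho(\cdot)\eta\rangle\in W(L^2)$, a local-maximum (amalgam) estimate then bounds the frame coefficients $\sum_{\lambda\notin B_{R+r}}|\langle\xi,\rho(x_\lambda)\eta\rangle|^2$ by $\epsilon(r,\delta)\|\xi\|^2$, with $\epsilon$ small for $r$ large and $\delta$ small. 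Expanding $\xi=\sum\langle\xi,\rho(x_\lambda)\eta\rangle\tilde e_\lambda$ gives $\|\xi-P_R\xi\|\le C\sqrt{\epsilon}\|\xi\|$, so $P_R$ is injective on $V$. Hence
\[
\#(\Lambda\cap B_{R+r}(x))\ge \dim V = d_\rho\mu(B_R)+o(\mu(B_R)).
\]
Dividing by $\mu(B_{R+r})\sim\mu(B_R)$, taking the infimum over $x$, and letting $R\to\infty$ yields $D^-(\Lambda)\ge d_\rho$.

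For (ii) the argument is dual. Take the biorthogonal system $\{g_\lambda\}$ in $\overline{\operatorname{span}}\,\rho(\Lambda)\eta$ and consider $W=\operatorname{span}\{g_\lambda:\lambda\in\Lambda\cap B_{R-r}(x)\}$, whose dimension is $\#(\Lambda\cap B_{R-r}(x))$. The Riesz property together with the $W(L^2)$ decay of $\langle \eta,\rho(\cdot)\eta\rangle$ shows that $\langle g,\rho(\cdot)\eta\rangle$ is concentrated on $B_R(x)$ for $g\in W$. This gives $\langle T_{B_R}g,g\rangle\ge(1-\epsilon)\|g\|^2$ on $W$, and hence $\dim W\le \#\{\text{eigenvalues}>1-\epsilon\} \le d_\rho\mu(B_R)+o(\mu(B_R))$ (counting eigenvalues via trace, since $T_K\le I$). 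This gives $D^+(\Lambda)\le d_\rho$. The Riesz basis statement then follows by combining (i), (ii), and $D^-\le D^+$.

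The main obstacle is the uniform eigenvalue-counting asymptotics for $T_{B_R(x)}$. These require a Følner/boundary estimate for balls in $N/Z(N)$ that holds uniformly in the center. On a nilpotent group, balls for a word or Carnot-type metric do have vanishing relative boundary (by Pansu/Guivarc'h volume asymptotics), and I will rely on this, using left invariance of the metric and Haar measure to remove the dependence on $x$. A second technical point is transferring the $W(L^2)$ amalgam control of the matrix coefficient to the sampled sums. This needs $\Lambda$ to be relatively separated, which follows in (ii) from the Bessel property. In (i) I will first reduce to a relatively separated subset that is still a frame, or else bound the upper density by using the Bessel bound.
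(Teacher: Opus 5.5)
The paper quotes this theorem from \cite{fuehr2017density, caspers2023overcompleteness} without proof, so there is no in-paper argument to compare with. Your Landau-type framework is the standard one, and your preliminary facts are correct: $0\le T_K\le I$, $\operatorname{tr}T_K=d_\rho\mu(K)$, $\operatorname{tr}(T_K-T_K^2)=o(\mu(K))$ along F\o lner balls, and the resulting eigenvalue count.

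\textbf{The gap.} It lies in the two key estimates, which are operator-norm statements on subspaces of dimension comparable to $\mu(B_R)$:
\begin{enumerate}[(i)]
    \item You need $\sum_{\lambda\notin B_{R+r}(x)}|\langle\xi,\rho(x_\lambda)\eta\rangle|^2\le\epsilon\|\xi\|^2$ for \emph{every} $\xi\in V$, with $\epsilon$ independent of $R$; otherwise injectivity of $P_R$ on $V$ is lost as $R\to\infty$.
    \item You need $\langle T_{B_R(x)}g,g\rangle\ge(1-\epsilon)\|g\|^2$ for \emph{every} $g\in W$.
\end{enumerate}
Neither follows from $C:=\langle\eta,\rho(\cdot)\eta\rangle\in W(L^2)$ by the local-maximum route you indicate. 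To pass from the $L^2$-mass of $F=\langle\xi,\rho(\cdot)\eta\rangle$ off $B_R(x)$ to the $\ell^2$-norm of its far samples via $|F|\le d_\rho|F|*|C|$, you need the operator with kernel $|C(y^{-1}\lambda)|\mathbf{1}_{\{d(y,\lambda)\ge r\}}$ to be small from $L^2(G)$ to $\ell^2(\Lambda)$, uniformly in $R$. Schur's test gives this when $C\in W(L^1)$, for instance for smooth $\eta$. With only $C\in W(L^2)$, Cauchy--Schwarz gives just a Hilbert--Schmidt bound, and that bound grows like $\mu(B_R)$.

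Part (ii) has a further problem. Concentration of $\langle g,\rho(\cdot)\eta\rangle$ for $g$ in the span of the biorthogonal vectors $g_\lambda$ requires localization of the dual system, that is, off-diagonal decay of the inverse Gramian. Nothing in your argument provides this.

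\textbf{The fix.} What $W(L^2)$ and relative separation do give is a bound for single translates,
$\epsilon(r):=\sup_{y\in G}\sum_{\lambda\notin B_r(y)}|C(y^{-1}\lambda)|^2\to0$.
The missing idea is to feed this into trace comparisons (Ramanathan--Steger) instead of uniform concentration; the eigenvalue asymptotics then become unnecessary. Set $T:=T_{B_R(x)}$.

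For (i), keep your $P_R$. Since $\operatorname{rank}P_R\le\#(\Lambda\cap B_{R+r}(x))$ and $T\le I$,
\[
\#(\Lambda\cap B_{R+r}(x))\ge\operatorname{tr}(P_RT)=d_\rho\mu(B_R)-d_\rho\int_{B_R(x)}\|(I-P_R)\rho(y)\eta\|^2\,d\mu(y)\ge d_\rho\mu(B_R)\bigl(1-A^{-1}\epsilon(r)\bigr).
\]
The last step holds because expanding $\rho(y)\eta=\sum_\lambda\langle\rho(y)\eta,\rho(x_\lambda)\eta\rangle\tilde e_\lambda$ and using the Bessel bound $A^{-1}$ of the dual frame ($A$ the lower frame bound) gives $\|(I-P_R)\rho(y)\eta\|^2\le A^{-1}\epsilon(r)$ for $y\in B_R(x)$.

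Alternatively, you can keep $V$. Let $Q$ be restriction to $\Lambda\setminus B_{R+r}(x)$ and discard the subspace on which $Q\Phi^*|_V$ has singular values $\ge\tau$. Since $\|Q\Phi^*T\|_{\mathrm{HS}}^2\le d_\rho\mu(B_R)\epsilon(r)$, that subspace has dimension $O(\epsilon(r)\mu(B_R)/\tau^2)$.

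For (ii), let $F=\Lambda\cap B_{R-r}(x)$ and let $P_F$ be the orthogonal projection onto $\operatorname{span}\{\rho(x_\lambda)\eta:\lambda\in F\}$. Let $(h_\lambda)_{\lambda\in F}$ be the biorthogonal basis of that span, so $\|h_\lambda\|\le A^{-1/2}$ with $A$ the lower Riesz bound. Then
\[
\#F=\operatorname{tr}P_F\le\operatorname{tr}T+\sum_{\lambda\in F}\langle(I-T)\rho(x_\lambda)\eta,h_\lambda\rangle\le d_\rho\mu(B_R)+\#F\,A^{-1/2}\Bigl(d_\rho\int_{G\setminus B_r(e)}|C|^2\,d\mu\Bigr)^{1/2},
\]
which uses only $C\in L^2$.

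Finally, no reduction is needed in (i). A frame is Bessel, so the argument you give in (ii) already shows that $\Lambda$ is relatively separated.
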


\zcref{prop:necessary_density_conditions} shows that the formal dimension $d_{\rho}$ is the critical density separating frames and Riesz sequences. The result forms an extension of the classical density theorem for Gabor systems \cite{heil2007history}, which correspond to the Heisenberg group. In the latter case, the density conditions in \zcref{prop:necessary_density_conditions} are additionally known to be \emph{strict} inequalities under additional smoothness conditions on $\eta$, i.e., when $\eta$ is a Schwartz function, cf. \cite{grochenig2015deformation, ascensi2014dilation}. These strict necessary density conditions provide a far reaching generalization of the classical Balian-Low theorem  \cite{battle1988heisenberg}, which asserts, in particular, that the system $\big \{ e^{2\pi i m \cdot } \eta (\cdot - n) : m,n \in \Z \big\}$ for a Schwartz function $\eta \in  \mathcal{S} (\R)$ cannot form an orthonormal basis for $L^2 (\mathbb{R})$.

Although a Balian-Low type theorem has been expected to hold for arbitrary connected, simply connected, nilpotent Lie groups (see, e.g., \cite{grochenig2018orthonormal, grochenig2020balian, velthoven2024density}), it has so far only been known for the subclass of so-called \emph{homogeneous groups}, i.e., nilpotent Lie groups that admit automorphism dilations. More precisely, it was shown in \cite[Theorem 1.1]{grochenig2020balian} that for homogeneous groups the necessary density inequalities in \zcref{prop:necessary_density_conditions} must be \emph{strict} whenever $\eta$ is a smooth vector for $\rho$; in notation, $\eta \in \Hrho^{\infty}$. Nevertheless, there exist numerous nonhomogeneous nilpotent Lie groups admitting square-integrable representations, see, e.g., \cite[Theorem 1.1]{burde2025characteristically} and \cite[Example 7.5]{beltita2015transference}.

The aim of the present paper is to prove a Balian-Low theorem for general nilpotent Lie groups. The following theorem is our main result.

\begin{theorem} \label{thm:main_intro}
With notation as in \zcref{prop:necessary_density_conditions}, the following assertions hold:
\begin{enumerate}[(i)]
    \item If $\rho(\Lambda) \eta$ is a frame for $\Hrho$ with $\eta \in \Hrho^{\infty}$, then $D^-(\Lambda) > d_{\rho}$.
    \item If $\rho(\Lambda) \eta$ is a Riesz sequence in $\Hrho$ with $\eta \in \Hrho^{\infty}$, then $D^+ (\Lambda) < d_{\rho}$.
\end{enumerate}
In particular, if $\rho(\Lambda) \eta$ is a Riesz basis for $\Hrho$, then $\eta \notin \Hrho^{\infty}$.
\end{theorem}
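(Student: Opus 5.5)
The plan follows the deformation strategy behind the Gabor case (Gröchenig–Ortega-Cerdà–Romero), but, as the abstract indicates, we deform the \emph{representation} rather than the index set. For homogeneous groups one shrinks $\Lambda$ by automorphic dilations $\delta_t$ and uses $D^{\pm}(\delta_t\Lambda) = t^{-Q} D^{\pm}(\Lambda)$. In general no such dilations exist. Instead we will build a continuous family of irreducible representations $(\rho_t)_{t}$ of $N$, each square-integrable modulo $Z(N)$, with $\rho_1 = \rho$ and formal degrees $d_{\rho_t}$ depending continuously and strictly monotonically on $t$.

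The natural choice comes from Kirillov theory. Square-integrability modulo the center means that $\rho$ corresponds to a linear functional $\ell \in \mathfrak{n}^*$ with $d_\rho = |\Pf(\ell)|$, where $\Pf$ is the Pfaffian polynomial of the form $B_\ell(X,Y)=\ell([X,Y])$ on $\mathfrak{n}/\mathfrak{z}$. Set $\rho_t = \pi_{t\ell}$. Since $t\ell$ restricted to $\mathfrak{z}$ is still nondegenerate for $B_{t\ell}$, each $\rho_t$ is square-integrable modulo $Z(N)$. Moreover $d_{\rho_t} = |t|^{m}d_\rho$ with $2m=\dim \mathfrak{n}/\mathfrak{z}$, which is strictly monotone in $t>0$. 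All $\rho_t$ can be realized on a common space $L^2(\R^m)$ via a fixed Malcev/jump-index polarization, so that $\rho_t(x)$ acts by explicit formulas polynomial in $t$ and $x$. The smooth vectors form the common space $\mathcal{S}(\R^m)$.

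The key step is a stability result. If $\rho(\Lambda)\eta$ is a frame (respectively a Riesz sequence) with $\eta \in \Hrho^\infty$, then $\rho_t(\Lambda)\eta$ remains a frame (respectively a Riesz sequence) for all $t$ in a neighbourhood of $1$. I would prove this by showing that the analysis operators $C_t\colon \Hrho\to\ell^2(\Lambda)$, $C_t f = (\langle f,\rho_t(x_\lambda)\eta\rangle)_\lambda$, satisfy $\|C_t - C_1\| \to 0$ as $t\to1$. Frame and Riesz bounds are stable under small operator-norm perturbations: lower bounds for $C_1$ and $C_1^*$ persist. To get the norm estimate, I would write
\[
C_t - C_1 = \int_1^t \partial_s C_s\,ds,
\]
where $\partial_s \rho_s(x)\eta = \rho_s(x)\,A_s(x)\eta$ for some differential operator $A_s(x)$ with coefficients polynomial in the coordinates of $x$. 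The difficulty is that this polynomial growth in $x$ is not uniform over $\Lambda$. It must be absorbed using the fact that $\Lambda$ is relatively separated (finite upper density is implied by the frame/Bessel property). It also requires that matrix coefficients $\langle \rho_s(x)A\eta, \rho_s(y) g\rangle$ have rapid off-diagonal decay for Schwartz $\eta$. Concretely, I would try to exhibit, for $s$ near $1$, an intertwining-type relation $\rho_s(x) = U_s\,\rho(\alpha_s(x))\,V_s(x)$, where $U_s$ is unitary and $\alpha_s$ is a homeomorphism of $N/Z(N)$. The factor $V_s(x)$ should be a local correction whose deviation from the identity is controlled on $\mathcal{S}$ by weights of the form $(1+|x|)^k(s-1)$. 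This step is the main obstacle. One then estimates the resulting operators by a Schur test using the $W(L^1)$-localization of smooth coefficients against the relatively separated set $\Lambda$. If the global polynomial weights cannot be controlled uniformly, a fallback is to apply the stability argument locally: Beurling density only depends on large balls, so it suffices to work with a localized comparison of the two systems.

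Granting stability, the conclusion is formal. Suppose $\rho(\Lambda)\eta$ is a frame with $\eta\in\Hrho^\infty$. Then for some $t>1$ close to $1$, $\rho_t(\Lambda)\eta$ is a frame with $\eta$ still smooth, so its matrix coefficients lie in $W(L^2)$. Applying \zcref{prop:necessary_density_conditions}(i) to $\rho_t$ gives $D^-(\Lambda)\ge d_{\rho_t} = t^m d_\rho > d_\rho$. Similarly, in the Riesz case we take $t<1$ and apply part (ii), which gives $D^+(\Lambda)\le t^m d_\rho<d_\rho$. A Riesz basis would force both strict inequalities, contradicting $D^-\le D^+$, so $\eta\notin\Hrho^\infty$.
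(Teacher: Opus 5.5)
Your outer argument matches the paper's: deform $\ell$ to $t\ell$, use $d_{\rho_t}=t^d d_\rho$, and apply \zcref{prop:necessary_density_conditions} to $\rho_t$. The gap is in the stability step: $\|C_t-C_1\|$ does not tend to $0$. Take the Heisenberg group in the common basis realization $(\rho_t(0,y,x)f)(u)=e^{2\pi i tuy}f(u+x)$. There one computes
\[
\bigl\|\rho_t(0,y,x)\eta-\rho(0,y,x)\eta\bigr\|^2
=2\|\eta\|^2-2\operatorname{Re}\Bigl(e^{-2\pi i(t-1)xy}\,\widehat{|\eta|^2}\bigl((1-t)y\bigr)\Bigr)
\longrightarrow 2\|\eta\|^2
\quad\text{as } |y|\to\infty,
\]
for every fixed $t\neq 1$. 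Testing the Bessel inequality of the difference system on one of its own vectors gives $\|C_t-C_1\|\geq\sup_\lambda\|\rho_t(x_\lambda)\eta-\rho(x_\lambda)\eta\|$. Any frame index set is relatively dense, hence unbounded in the $y$-direction, so $\|C_t-C_1\|\geq\sqrt2\,\|\eta\|$ for all $t\neq1$. Re-choosing the phases of the representatives $x_\lambda$ does not help, because $\langle\rho_t(x_\lambda)\eta,\rho(x_\lambda)\eta\rangle\to0$ along such $\lambda$.

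So the polynomial growth of your generator $A_s(x)$ in the absolute position $x$ is not a summability issue that relative separation could absorb: the two systems genuinely decorrelate at infinity. Your proposed factorization does not address this, since a correction $V_s(x)$ deviating from $I$ by $(1+|x|)^k|s-1|$ is again not uniformly small over $\Lambda$. The ``local'' fallback is not an argument.

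What is missing is a comparison in which $t$ enters only through \emph{relative} positions. The paper pairs $\rho_t(\Lambda)\eta$ with the deformed dual system $(\pi_t(\lambda)\phi_\lambda)_\lambda$, where $\phi_\lambda:=\pi(\lambda)^*\tilde\eta_\lambda$. It shows $\tilde{\fram}^t=\synt^t\tilde{\anal}^t\to I$ for frames, resp.\ $\tilde{\gram}^t=\tilde{\anal}^t\synt^t\to I$ for Riesz sequences, in operator norm; at $t=1$ these equal $I$ by the reconstruction formula, resp.\ biorthogonality. After conjugating with the isometry $V^t$, the kernel of $\tilde{\fram}^t-I$ involves only $C^t_{\eta,\phi_\lambda}(y^{-1}\lambda)$, $C^t_{\eta,\eta}(\lambda^{-1}x)$, $C^t_{\eta,\eta}(y^{-1}x)$ and the cocycle $\sigma_t(y^{-1}\lambda,\lambda^{-1}x)=e^{2\pi i t\omega(y^{-1}\lambda,\lambda^{-1}x)}$. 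The cocycle's deviation from $\sigma$ is at most $\varepsilon(t)\,w_{s'}(y^{-1}\lambda)\,w_{s'}(\lambda^{-1}x)$, and polynomial growth in these relative variables is absorbed by the Schwartz decay of the coefficients in a Schur test.

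This needs two ingredients absent from your plan:
\begin{enumerate}[(a)]
    \item continuity of $t\mapsto C^{\rho_t}_{f_1,f_2}$ in $\mathcal S(N/Z(N))$, uniformly for $f_1,f_2$ in bounded subsets of $\mathcal S(\R^d)$ (\zcref{thm:continuity-in-t}, proved by induction on $\dim N/Z(N)$ via Kirillov's lemma and induction in stages);
    \item boundedness of $(\phi_\lambda)_\lambda$ in $\Hrho^\infty$ (\zcref{lem:boundedness-frame-sequence}, via spectral invariance of the Jaffard algebra for the Moore--Penrose inverse of the Gramian).
\end{enumerate}
Norm-invertibility of $\tilde{\fram}^t$, resp.\ $\tilde{\gram}^t$, then gives the right inverse $\tilde{\anal}^t(\tilde{\fram}^t)^{-1}$, resp.\ left inverse $(\tilde{\gram}^t)^{-1}\tilde{\anal}^t$, of $\synt^t$. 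After that, your concluding density argument goes through as written.
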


\zcref{thm:main_intro} solves an open problem that was implicitly posed in \cite{grochenig2018orthonormal, grochenig2020balian} and that was explicitly formulated as \cite[Question 12]{velthoven2024density}. Moreover, it shows the optimality of the frame and Riesz sequence constructions in \cite{bownik2025frame, bownik2026riesz}, which prove that for every $\varepsilon > 0$ and $\eta \in \Hrho^{\infty} \setminus \{0\}$, there exists $\Lambda \subseteq N/Z(N)$ with density $D^+ (\Lambda) \leq (1+\varepsilon) d_{\rho}$ (resp. $D^- (\Lambda) \geq (1-\varepsilon) d_{\rho}$) such that $\rho(\Lambda) \eta$ is a frame (resp. Riesz sequence) for $\Hrho$.

Our overall proof approach for \zcref{thm:main_intro} is based on a classical line of arguments that argues through a deformation result for frames and Riesz sequences. Specifically, we will show that if $\rho(\Lambda) \eta$ forms a frame (resp. Riesz sequence) for $\Hrho$, then there exists some $t>1$ (resp. $t < 1$) such that an associated coherent system $\rho_t (\Lambda) \eta$ is also a frame (resp. Riesz sequence) for $\Hrho$ with formal dimension $d_{\rho_t} = t^d d_{\rho}$ for some $d \in \mathbb{N}$, which implies via \zcref{prop:necessary_density_conditions} that $D^- (\Lambda) \geq d_{\rho_t} > d_{\rho}$ (resp. $D^+ (\Lambda) < d_{\rho}$). A similar overall deformation argument underlies the Balian-Low type theorems \cite{grochenig2015deformation, ascensi2014dilation, grochenig2020balian} in the case there exist automorphism dilations $(\delta_t)_{t > 0}$ on $G$. In that case, it was shown in \cite{grochenig2015deformation, ascensi2014dilation, grochenig2020balian} that if $\rho(\Lambda) \eta$ is a frame (resp. Riesz sequence) for $\Hrho$, then so is $\rho(\delta_t (\Lambda)) \eta$ for some $t>1$ (resp. $t<1$). The key difference between the approach in the present paper and the papers \cite{grochenig2015deformation, ascensi2014dilation, grochenig2020balian} is, however, that we deform the representation from $\rho = \rho_1$ to some $\rho_t$ and leave the index set $\Lambda$ fixed, whereas the papers  \cite{grochenig2015deformation, ascensi2014dilation, grochenig2020balian} change the index set from $\Lambda = \delta_1(\Lambda)$ to $\delta_t (\Lambda)$ while leaving the representation $\rho$ fixed. Although both types of deformations are closely related in the setting of homogeneous groups, they differ significantly beyond that setting. Specifically, in our setting, $\rho_t$ is still a representation, whereas for a more general deformation $(D_t)_{t > 0}$, the maps $\rho \circ D_t$ are typically no longer representations.

For proving our deformation theorem for frames and Riesz sequences, we use the Kirillov theory for representations of nilpotent Lie groups  to deform the  representation $\rho$. Specifically, we use that any irreducible representation of a nilpotent Lie group is induced from a character associated to a functional on the Lie algebra. By scaling this functional by $t>0$, we therefore obtain a family $(\rho_t)_{t>0}$ of irreducible representations $\rho_t$ of $N$ that are square-integrable modulo $Z(N)$. For any square-integrable representation, it is well-known that the matrix coefficients associated to smooth vectors form Schwartz functions on the quotient group $N/Z(N)$, so that the family $(\rho_t)_{t>0}$ provides us with a family of Schwartz functions on $N/Z(N)$. 
Our main result on the deformation of representations (\zcref{thm:continuity-in-t}) is that these matrix coefficients are continuous in $t>0$ in the Schwartz space topology. Using this deformation theorem for matrix coefficients, we then show that certain frame operators (resp. Gramian matrices) associated to the systems $\rho_t (\Lambda)$ are continuous at $t=1$ in the operator norm topology (see \zcref{thm:common-deformation}), which implies the frame (resp. Riesz sequence) property of $\rho_t (\Lambda) \eta$ for all $t > 0$ sufficiently close to $1$.

The remainder of this paper is organized as follows. \zcref{sec:smooth0} provides basic background on nilpotent Lie groups and their representations. The deformation theorem for square-integrable representations is proven in \zcref{sec:deformation_representation}. Our main results on coherent systems, including \zcref{thm:main_intro}, are proven in \zcref{sec:coherent}. We prove a version of \zcref{thm:main_intro} for projective representations in \zcref{sec:projective}. Lastly, several auxiliary results are collected in the appendices.

\section{Smooth vectors for nilpotent Lie groups} \label{sec:smooth0}

Throughout this section, $N$ will denote a connected, simply connected, nilpotent Lie group with Lie algebra $\mathfrak{n}$ and exponential map $\exp_N : \mathfrak{n} \to N$. We let $\mu_N$ denote a left Haar measure on $N$. As a standard reference for representation theory of nilpotent Lie groups, we mention the book \cite{corwin1990representations}.

\subsection{Malcev bases} \label{sec:malcev}
Let $X_1, ..., X_n$ be a basis for $\mathfrak{n}$ and let $\mathfrak{n}_1 \subseteq \mathfrak{n}_2 \subseteq ... \subseteq \mathfrak{n}_k = \mathfrak{n}$ be subalgebras with $\dim \mathfrak{n}_j = n_j$. The basis $X_1, ..., X_n$ is called a \emph{weak Malcev basis} for $\mathfrak{n}$ passing through $\mathfrak{n}_1  \subseteq \cdots \subseteq \mathfrak{n}_k = \mathfrak{n}$ if it satisfies the conditions
\begin{enumerate}
    \item[(m1)] for each $1 \leq m \leq n$,  $\mathfrak{m}_m := \mathbb{R}$-$\Span \{ X_1, ..., X_m \}$ is a subalgebra of $\mathfrak{n}$,
    \item[(m2)] for each $1 \leq j \leq k$, $\mathfrak{m}_{n_j} = \mathfrak{n}_j$.
\end{enumerate}
In case the subalgebras $\mathfrak{n}_1 \subseteq \mathfrak{n}_2 \subseteq ... \subseteq \mathfrak{n}_k = \mathfrak{n}$ are ideals,  the basis $X_1, ..., X_n$ is said to be a \emph{strong Malcev basis} for $\mathfrak{n}$ passing through $\mathfrak{n}_1 \subseteq  ... \subseteq \mathfrak{n}_k = \mathfrak{n}$ if it satisfies (m2) and
\begin{enumerate}
    \item[(m3)] for each $1 \leq m \leq n$, $\mathfrak{m}_m := \mathbb{R}$-$\Span \{ X_1, ..., X_m \}$ is an ideal of $\mathfrak{n}$.
\end{enumerate}
In case we do not list any subalgebras (resp. ideals), we will simply refer to the basis as a \emph{weak} (resp. \emph{strong}) Malcev basis for $\mathfrak{n}$. See, e.g., \cite[Theorem 1.1.13]{corwin1990representations} for the existence of Malcev bases.

If $\mathfrak{m} \subseteq \mathfrak{n}$ is an $m$-dimensional subalgebra, $M := \exp_N (\mathfrak{m})$ and $X_1, ..., X_n$ is a weak Malcev basis for $\mathfrak{n}$ passing through $\mathfrak{m}$, then the map 
\[
\phi : \mathbb{R}^{n - m} \to N/M, \quad (t_{m+1}, ..., t_{n}) \mapsto \exp_N (t_n X_{n}) \cdots \exp_N (t_{m+1} X_{m+1}) M
\]
is a diffeomorphism. In particular, the map $s : N/M \to N$ given by
\[
s(\exp_N (t_n X_n) \cdots \exp_N (t_{m+1} X_{m+1}) M) = \exp_N (t_n X_n) \cdots \exp_N (t_{m+1} X_{m+1})
\]
is a smooth cross-section for the canonical quotient map $p : N \to N/M$, i.e., $p \circ s = \operatorname{id}_{N/M}$.
We sometimes refer to such a cross-section as a \emph{Malcev cross-section}.

\subsection{Induced and irreducible representations}
Let $\ell \in \mathfrak{n}^*$. A Lie subalgebra $\mathfrak m\subseteq\mathfrak n$ is said to be
\emph{subordinate} to $\ell$ if
$
    \ell([\mathfrak m,\mathfrak m])=0.
$
In this case,
\begin{align} \label{eq:character}
    \chi_\ell(\exp_N (X)) =e^{2\pi i\ell(X)},
    \qquad X\in\mathfrak m,
\end{align}
defines a unitary character of $M=\exp_N(\mathfrak m)$, and we write
$
    \rho(\ell,\mathfrak m)
    =
    \operatorname{Ind}_M^N(\chi_\ell) 
$ for the associated induced representation of $N$.

For $\ell\in\mathfrak n^*$, define its radical by
\[
    \mathfrak n(\ell)
    :=
    \big \{ X\in\mathfrak n:
      \ell([X,Y])=0
      \text{ for every }Y\in\mathfrak n \big\}.
\]
A subordinate subalgebra $\mathfrak m$ is called a \emph{polarization} for
$\ell$ if
\[
    \dim (\mathfrak m)
    =
    \frac{\dim (\mathfrak n ) +\dim (\mathfrak n(\ell))}{2}.
\]
If $\mathfrak m$ is a polarization for $\ell$, then
$\rho(\ell,\mathfrak m)$ is irreducible, and its unitary equivalence
class is independent of the choice of polarization. We will therefore often simply write $\rho_\ell = \rho(\ell,\mathfrak m)$. Note that in this case the central character of $\rho_{\ell}$, that is, the character $\chi$ satisfying $\rho_{\ell}|_{Z(N)} = \chi \cdot I_{\Hpi}$, is given by \eqref{eq:character}, because the center $\mathfrak{z}(\mathfrak{n})$ of $\mathfrak{n}$ is contained in the polarization $\mathfrak{m}$.

Conversely, by Kirillov's orbit method, every irreducible
unitary representation of $N$ is equivalent to an induced representation of the form $\rho_\ell$ for some
$\ell\in\mathfrak n^*$.

\subsection{Smooth vectors} \label{sec:smooth}
Let $(\rho, \Hrho)$ be an irreducible representation of $N$. A vector $\eta \in \Hrho$ is said to be \emph{smooth} if the orbit map $x \mapsto \rho(x) \eta$ is smooth on $N$. We denote the space of all smooth vectors by $\Hrho^{\infty}$. The Lie algebra $\mathfrak{n}$ acts on $\Hrho^{\infty}$ by means of the derived representation
\[
d\rho(X) \eta = \frac{d}{dt} \bigg|_{t=0} \rho(\exp_N (tX)) \eta, \quad X \in \mathfrak{n}, \; \eta \in \Hrho^{\infty}.
\]
Given a basis $X_1, ..., X_n$ of $\mathfrak{n}$, a family of semi-norms on 
$\Hrho^{\infty}$ can be defined as
\[
p_{\alpha} (\eta) := \big\| d\rho(X^{\alpha}) \eta \big\|_{\Hrho},
\quad \alpha \in \mathbb{N}_0^n.
\]
This gives $\Hrho^\infty$ the structure of a Fr\'echet space, and the group $N$ acts continuously on $\Hrho^{\infty}$.

For $\ell \in \mathfrak{n}^*$ and an irreducible representation $\rho_{\ell} := \ind^N_{\exp_N (\mathfrak{m})} (\chi_{\ell})$, one can use a weak Malcev basis of $\mathfrak{n}$ passing through the polarization $\mathfrak{m}$ to obtain a realization of $\rho_{\ell}$ in $L^2(\R^{n - m})$. Indeed, if $X_1, ..., X_n$ is such a weak Malcev basis, 
then the associated cross-section identifies $N/M$ with $\mathbb{R}^{n-m}$, and hence realizes the action of $\rho_{\ell}$ on $L^2 (\R^{n-m}) \cong  L^2(N/M)$. Following \cite[Section 4.1]{corwin1990representations}, we refer to such a realization of $\rho_{\ell}$ as a \emph{basis realization}. Note that a basis realization only depends on the choice of weak Malcev basis passing through $\mathfrak{m}$.

The following theorem is \cite[Corollary 4.1.2]{corwin1990representations}.

\begin{theorem}[\cite{corwin1990representations}] \label{thm:smooth_schwartz}
Let $\rho_{\ell}$ be an irreducible representation in the basis realization. Then its space of smooth vectors is given by $\mathcal{H}_{\rho_{\ell}}^{\infty} = \mathcal{S}(\R^{n-m})$ as Fr\'echet spaces. 
\end{theorem}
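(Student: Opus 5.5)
Our plan is to follow the standard proof in Corwin--Greenleaf. We first make the basis realization explicit. Write $k = n-m$ and let $s:\R^{k}\to N$, $s(t) = \exp_N(t_n X_n)\cdots\exp_N(t_{m+1}X_{m+1})$, be the Malcev cross-section. For $x\in N$ and $t\in\R^k$ we decompose uniquely $s(t)\,x = m(t,x)\, s(t\cdot x)$ with $m(t,x)\in M$. Then $\rho_\ell(x)f(t) = \chi_\ell(m(t,x))\,f(t\cdot x)$ for $f \in L^2(\R^k)$. By nilpotency and the Malcev property (m1), the map $(t,x)\mapsto t\cdot x$ is polynomial in Malcev coordinates and triangular: its $j$-th coordinate is $t_j + x_j + p_j$, where $p_j$ depends only on earlier variables. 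Likewise $\ell(\log m(t,x))$ is a real polynomial. Differentiating at $x = \exp_N(sX)$, $s=0$, gives the derived representation
\[
d\rho_\ell(X) = \sum_{j} a_j^X(t)\,\partial_{t_j} + 2\pi i\, b^X(t),
\]
with polynomial coefficients $a_j^X, b^X$.

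The inclusion $\mathcal S(\R^k)\subseteq \mathcal H^\infty$, with continuity, follows next. Each $d\rho_\ell(X^\alpha)$ is a polynomial-coefficient differential operator, so $p_\alpha(f)$ is dominated by a Schwartz seminorm. To show that the orbit map is smooth on $\mathcal S$, we would use the explicit formula. Translation and multiplication by $\chi_\ell(m(t,x))$ preserve $\mathcal S(\R^k)$ and depend smoothly on $x$ in the Schwartz topology, since the phases are polynomial in $(t,x)$. Alternatively, $\mathcal S$ is invariant and $d\rho_\ell(\mathfrak n)$ acts on it, so a Sobolev/G\aa rding-type argument applies.

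The main obstacle is the converse: $\mathcal H^\infty\subseteq\mathcal S(\R^k)$ with continuous seminorm estimates. The key fact we would establish is that the image $d\rho_\ell(\mathcal U(\mathfrak n))$ contains all operators $t^\beta\partial^\gamma$, namely that $d\rho_\ell$ maps $\mathcal U(\mathfrak n_\C)$ onto the Weyl algebra $A_k$ of polynomial-coefficient differential operators on $\R^k$. We would prove this by induction on $\dim\mathfrak n$, as in Kirillov's lemma. If the center of $\mathfrak n$ is more than $1$-dimensional and $\ell$ is nonzero on it, we factor through the kernel. Otherwise we pick $Z$ spanning the center, $Y$ with $[Y,\mathfrak n]\subseteq\R Z$, and $X$ with $[X,Y]=Z$, and use the codimension-one ideal $\mathfrak n_0 = \mathfrak z_{\mathfrak n}(Y)$. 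In a compatible realization, $d\rho_\ell(Y)$ is $2\pi i$ times multiplication by the coordinate transverse to $\mathfrak n_0$. The operator $d\rho_\ell(X)$ is essentially $\partial$ in that coordinate, and the induction hypothesis for $\mathfrak n_0$ yields the remaining generators. Changing between different weak Malcev bases through $\mathfrak m$ only composes with polynomial diffeomorphisms of $\R^k$ that have polynomial inverses, and these preserve both $\mathcal S$ and $A_k$. Hence the surjectivity holds for every basis realization.

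Given surjectivity, we conclude as follows. For $f\in\mathcal H^\infty$, each $t^\beta\partial^\gamma f$ (in the distributional sense) equals $d\rho_\ell(D)f$ for some $D\in\mathcal U(\mathfrak n)$, and so lies in $L^2$. Its norm is bounded by finitely many seminorms $p_\alpha(f)$. Since the family of $L^2$ norms of $t^\beta\partial^\gamma f$ defines the topology of $\mathcal S$ (by a Sobolev embedding), we get $f\in\mathcal S$ with continuous inclusion. Combining both inclusions gives equality as Fréchet spaces; the open mapping theorem can also be used for the last step once set equality is known.
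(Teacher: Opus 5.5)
The paper gives no proof of this statement; it quotes Corwin--Greenleaf, who derive Cor.~4.1.2 from the theorem that $d\rho_\ell$ maps $\mathcal U(\mathfrak n_{\mathbb C})$ onto the Weyl algebra. Your outline follows that route. The inclusion $\mathcal S\subseteq\mathcal H^\infty$ and the final Sobolev step are fine in outline.

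\textbf{The gap: your induction handles only a special polarization.} The inductive step works only when the \emph{given} polarization satisfies $\mathfrak m\subseteq\mathfrak n_0=\mathfrak z_{\mathfrak n}(Y)$. This is equivalent to $Y\in\mathfrak m$, since $\mathfrak m$ is maximal $B_\ell$-isotropic and $[Y,\mathfrak n]\subseteq\mathbb RZ$. Only then do you have $\operatorname{Ind}(\mathfrak m,\ell)=\operatorname{Ind}_{N_0}^N\operatorname{Ind}(\mathfrak m,\ell_0)$ with $\mathfrak m$ a polarization for $\ell_0$. Only then is $d\rho_\ell(Y)$ a multiplication operator.

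The statement, however, concerns an arbitrary polarization, and $\mathfrak m\not\subseteq\mathfrak z_{\mathfrak n}(Y)$ does occur. In the Heisenberg algebra with $[X,Y]=Z$ and $\mathfrak m=\mathbb RX\oplus\mathbb RZ$, $d\rho_\ell(Y)$ is a derivative and $d\rho_\ell(X)$ is a multiplication. Your remark about changing weak Malcev bases keeps $\mathfrak m$ fixed. Passing to a ``compatible realization'' built from a different polarization is not a polynomial change of variables; already for Heisenberg it is the Fourier transform. So this case is not covered. As written, your argument proves the theorem only for polarizations constructed recursively inside the successive Kirillov ideals.

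\textbf{How to close it.} You have two options.
First, show that the Kirillov element $Y$ can always be chosen inside $\mathfrak m$. In the Heisenberg example one just swaps $X$ and $Y$, but you give no general argument.
Second, treat $Y\notin\mathfrak m$ separately, as in the usual proof that $\operatorname{Ind}(\mathfrak m,\ell)$ does not depend on $\mathfrak m$:
\begin{enumerate}
\item $\mathfrak m_0=\mathfrak m\cap\mathfrak n_0$ has codimension one in $\mathfrak m$, and $\mathfrak m'=\mathfrak m_0+\mathbb RY\subseteq\mathfrak n_0$ is again a polarization.
\item Both $\operatorname{Ind}(\mathfrak m,\ell)$ and $\operatorname{Ind}(\mathfrak m',\ell)$ are induced from $H=\exp(\mathfrak m+\mathbb RY)$. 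There the two inducing representations factor through the three-dimensional Heisenberg group $H/\exp(\mathfrak m_0\cap\ker\ell)$.
\item Hence the intertwiner is a Fourier transform in one variable, up to polynomial phases and polynomial coordinate changes with polynomial inverses. It preserves $\mathcal S(\mathbb R^k)$ and conjugates $A_k$ onto $A_k$.
\item Therefore surjectivity for $\mathfrak m'$, which your argument covers, transfers to $\mathfrak m$.
\end{enumerate}

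\textbf{A smaller point.} ``The induction hypothesis yields the remaining generators'' hides an untwisting step. For $W\in\mathfrak n_0$, the operator $d\rho_\ell(W)$ acts fibrewise as $d\rho_{\ell_0}(\operatorname{Ad}(\exp uX)W)$, not as $1\otimes d\rho_{\ell_0}(W)$. One recovers $1\otimes d\rho_{\ell_0}(W)=\sum_j\frac{(-u)^j}{j!}\,d\rho_\ell(\operatorname{ad}(X)^jW)$, up to sign conventions. This uses that multiplication by $u$ is already in the image and that $\operatorname{ad}(X)$ is nilpotent.
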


Lastly, we mention the compatibility of induction in stages and basis realizations. Specifically, let  $\mathfrak n_0 \subseteq \mathfrak n$ be a Lie subalgebra and suppose
that $\mathfrak m \subseteq \mathfrak n_0$ is a polarization for both
$\ell$ and $\ell_0 := \ell|_{\mathfrak n_0}$. Set
$N_0 := \exp_N(\mathfrak n_0)$,
$k := \dim(\mathfrak n)-\dim(\mathfrak n_0)$, and
$d := \dim(\mathfrak n)-\dim(\mathfrak m)$.
A weak Malcev basis of $\mathfrak n_0$ passing through $\mathfrak m$
can be extended to a weak Malcev basis of $\mathfrak n$.
Using these bases, induction in stages gives
$\rho_\ell \cong \operatorname{ind}_{N_0}^{N}\rho_{\ell_0}$
in the realization on
$L^2\bigl(\mathbb R^k;L^2(\mathbb R^{d-k})\bigr)$.
In this realization, the smooth vectors are given by
\[
    \mathcal H_{\rho_\ell}^{\infty}
    = \mathcal S(\mathbb R^d)
    \cong \mathcal S\bigl(\mathbb R^k;\mathcal S(\mathbb R^{d-k})\bigr),
\]
where the last identification is given by $f \mapsto \big(u \mapsto f(u,\cdot) \big)$.

\subsection{Square-integrable representations}
An irreducible representation $(\rho, \Hrho)$ of $N$ is called \emph{square-integrable modulo the center $Z(N)$} if there exists nonzero $f \in \Hrho$ such that
\[
\int_{N/Z(N)} |\langle f, \rho(x) f \rangle |^2 \; d\mu_{N/Z(N)} (x) < \infty,
\]
where integration is with respect to a fixed left Haar measure on $N/Z(N)$. In this case, there exists a constant $d_{\rho} > 0$ depending on the normalization of the Haar measure on $N/Z(N)$ such that
\begin{align} \label{eq:ortho_rho}
\int_{N/Z(N)} |\langle f_1, \rho(x) f_2 \rangle |^2 \; d \mu_{N/Z(N)} (x) = d_{\rho}^{-1} \| f_1 \|^2 \| f_2 \|^2
\end{align}
for all $f_1, f_2 \in \Hrho$. The identity \eqref{eq:ortho_rho} is referred to as the \emph{orthogonality relations} for $\rho$ and $d_{\rho} > 0$ is called the \emph{formal degree} of $\rho$.

An irreducible representation $\rho_{\ell}$ is square-integrable modulo $Z(N)$ if and only if the radical of $\ell \in \mathfrak{n}^*$ satisfies
\[
\mathfrak{n}(\ell) = \mathfrak{z}(\mathfrak{n}),
\]
where $\mathfrak{z}(\mathfrak{n})$ denotes the Lie algebra of $Z(N)$, see, e.g., \cite[Corollary 4.5.4]{corwin1990representations}. In this case, since $\mathfrak{n}(\ell)$ has even codimension (cf. \cite[Lemma 1.3.2]{corwin1990representations}), we have that $\dim(N/Z(N)) = 2d$ for some $d \in \mathbb{N}_0$. 

The following lemma provides an explicit expression for the formal degree of a square-integrable representation, see, e.g., \cite[Theorem 4]{moore1974square} and \cite[Theorem 4.5.10]{corwin1990representations}.

\begin{lemma}[\cite{moore1974square}] \label{lem:pfaffian}
Let $(\rho,\Hrho)$ be an irreducible unitary representation of $N$
that is square-integrable modulo $Z(N)$, and let $\ell\in\mathfrak n^*$
be such that $\rho=\rho_\ell$. Write
$
    \dim\bigl(N/Z(N)\bigr)=2d
$ for $d \in \N_0$
and fix a strong Malcev basis
$
    Z_1,\ldots,Z_r,X_1,\ldots,X_{2d}
$
of $\mathfrak n$ such that
$
    \mathfrak z(\mathfrak n)
    =
    \mathbb{R}$-$\Span \{Z_1,\ldots,Z_r\}.
$
Let $\Pf(\ell)$ denote the Pfaffian of the
 matrix $(\ell([X_i,X_j]))_{i,j=1}^{2d}$, so that 
\[
    \operatorname{Pf}(\ell)^2
    =\det\big(\ell([X_i,X_j])\big)_{i,j=1}^{2d}.
\]
Then there exists a constant $c>0$ such that
\[
    d_\rho
    =
    c\lvert\Pf(\ell)\rvert.
\]
The constant $c$ depends only on the choice of Malcev basis
and the normalization of the left Haar measure $\mu_{N/Z(N)}$.
\end{lemma}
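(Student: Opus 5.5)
We first pass to a quotient in which the center is one-dimensional and the character is nontrivial, and then compare the orthogonality relations with the Plancherel formula. Set $\mathfrak z := \mathfrak z(\mathfrak n)$. Since $\mathfrak n(\ell)=\mathfrak z$, the form $B_\ell(X,Y):=\ell([X,Y])$ is nondegenerate on $\mathfrak n/\mathfrak z$. Hence the matrix $(\ell([X_i,X_j]))_{i,j}$ is invertible, and $\Pf(\ell)\neq 0$. Both sides of the claimed identity scale by the same factor under a change of Haar measure normalization, so it suffices to fix $\mu_{N/Z(N)}$ to be the pushforward of Lebesgue measure $dt_1\cdots dt_{2d}$ under the Malcev coordinates $(t_1,\dots,t_{2d})\mapsto \exp_N(t_{2d}X_{2d})\cdots\exp_N(t_1X_1)Z(N)$. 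This is a Haar measure, since the basis is strong Malcev.

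The main computation evaluates $\int_{N/Z(N)}|\langle f,\rho_\ell(x)f\rangle|^2\,dx$ for a convenient $f$. For this I would use the Kirillov character formula for $\rho_\ell$, or equivalently the Plancherel theorem on the orbit $\mathcal O_\ell=\mathrm{Ad}^*(N)\ell$.

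First, because $\mathfrak n(\ell)=\mathfrak z$, the orbit is the affine subspace $\mathcal O_\ell=\ell+\mathfrak z^\perp$. This holds because the orbit map $\mathfrak n/\mathfrak z\to \mathfrak z^\perp$, $X\mapsto \mathrm{ad}^*(X)\ell$, has full rank at every point, and the orbits of unipotent actions are closed.

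Second, the canonical measure on $\mathcal O_\ell$ is $|\Pf(B_\ell)|^{-1}$ times Lebesgue measure on $\mathfrak z^\perp$, where Lebesgue measure is taken dual to the basis $X_1,\dots,X_{2d}$. This is the standard normalization of the Liouville measure (cf.\ \cite[Section 4.5]{corwin1990representations}).

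Third, I apply the character formula for $\phi\in C_c^\infty(N)$:
\[
\operatorname{tr}\rho_\ell(\phi)=\int_{\mathcal O_\ell}\widehat{\phi\circ\exp_N}\,d\mu_{\mathcal O_\ell}.
\]
Take $\phi$ to have central character $\overline{\chi_\ell}$ and to be supported near $Z(N)$, i.e.\ $\phi$ lives on $N/Z(N)$. The orbit integral then reduces to a Fourier integral over $\mathfrak z^\perp\cong(\mathfrak n/\mathfrak z)^*$. Fourier inversion gives
\[
\operatorname{tr}\rho_\ell(\phi)=|\Pf(\ell)|^{-1}\,c_0\,\phi(e),
\]
with $c_0$ depending only on the basis, since the Jacobian of $\exp$ in these coordinates is $1$.

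Finally, I compare this with the orthogonality relations. For square-integrable $\rho$, the function $\psi(x)=d_\rho\,\overline{\langle f,\rho(x) f\rangle}$ with $\|f\|=1$ is a rank-one projection kernel: $\rho(\psi)$ is the orthogonal projection onto $\mathbb C f$. Hence $\operatorname{tr}\rho(\psi)=1$ while $\psi(e)=d_\rho$. Equating this with the trace computed above gives
\[
1=|\Pf(\ell)|^{-1}c_0\,d_\rho ,\qquad\text{so}\qquad d_\rho=c_0^{-1}|\Pf(\ell)|.
\]
One caveat is that $\psi$ is not compactly supported, so this step needs an approximation argument. Taking $f$ to be a smooth vector makes $\psi$ Schwartz modulo the center, and the trace formula then extends by continuity.

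The main obstacle is the second step: identifying the canonical orbit measure with $|\Pf|^{-1}$ times Lebesgue measure with exactly the right constant, together with justifying the character formula for functions that are Schwartz modulo the center. If this proves too cumbersome, I would instead argue by induction on $\dim\mathfrak n$. I would choose $Y\in\mathfrak z_2$ with $[Y,\mathfrak n]\subseteq\mathfrak z$, pass to the codimension-one ideal $\mathfrak n_0=\ker(\ell\circ\mathrm{ad}Y)$, and realize $\rho_\ell=\operatorname{ind}_{N_0}^N\rho_{\ell_0}$ in stages. Then I would track how both $d_\rho$ and $\Pf$ change by the factor $|\ell([Y,X])|$ for a complementary $X$, the Pfaffian factoring via a block-triangular form. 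The base case is the Heisenberg group, where both sides are computed directly from the Schrödinger representation.
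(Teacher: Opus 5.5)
The paper gives no proof of this lemma; it is quoted from \cite{moore1974square} and \cite[Theorem 4.5.10]{corwin1990representations}. Your main argument is the standard orbit-method derivation and is sound. The steps are: $\mathcal O_\ell=\ell+\mathfrak z^\perp$; its canonical measure is $|\Pf(\ell)|^{-1}$ times Lebesgue measure ($\Pf$ is constant on $\mathcal O_\ell$ because $\mathrm{Ad}(n)$ acts unipotently on $\mathfrak n/\mathfrak z$); Fourier inversion collapses the character formula to $\operatorname{tr}\rho(\phi)=c_0|\Pf(\ell)|^{-1}\phi(e)$; and the rank-one projection $\rho(\psi)$ then forces $d_\rho=c_0^{-1}|\Pf(\ell)|$.

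Two small repairs are needed. First, your $\psi$ has central character $\chi_\ell$, not the $\overline{\chi_\ell}$ your third step uses. With $\rho(\phi)=\int_{N/Z(N)}\phi(x)\rho(x)\,dx$, take $\psi(x)=d_\rho\langle f,\rho(x)f\rangle$ instead. Second, the approximation step can be avoided. Set $\phi_0(s(x)\exp_N(Z)):=\psi(s(x))e^{-2\pi i\ell(Z)}g(Z)$ with $g\in\mathcal S(\mathfrak z)$ and $\int_{\mathfrak z}g=1$. For $f\in\Hrho^\infty$ this $\phi_0$ lies in $\mathcal S(N)$ by \zcref{thm:schwartz_matrix}. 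It satisfies $\rho(\phi_0)=\rho(\psi)$ and $\int_{\mathfrak z}\phi_0(\exp_N(V+Z))e^{2\pi i\ell(Z)}\,dZ=\psi(\exp_N(V))$. So the Schwartz-class character formula applies verbatim, and it handles both the central character and the lack of compact support at once.

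In this paper's context, your inductive fallback is the more economical route, because it runs on the paper's own machinery. Work in the realization of Step 1 of the proof of \zcref{thm:continuity-in-t}, with Haar measures given by Lebesgue measure in the coordinates $(y,w,x)$ and $w$. Apply Plancherel in $y$ to \eqref{eq:coefficient-formula}, and use that $w\mapsto\gamma(w,u)$ is the unipotent linear map induced by $\mathrm{Ad}(\exp_N(uX_{2d}))$ on $\mathfrak n_0/(\mathbb RX_0\oplus\mathbb RX_1)$. This gives $\|C^{\rho_t}_{f_1,f_2}\|_{L^2}^2=t^{-1}d_{\rho_{0,t}}^{-1}\|f_1\|^2\|f_2\|^2$, that is, $d_{\rho_t}=t\,d_{\rho_{0,t}}$. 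Induction then yields $d_{\rho_t}=t^d d_\rho$ directly, which is the only consequence of this lemma the paper uses (\zcref{lem:dim-scaled-rep}). Your Pfaffian bookkeeping upgrades this to the full lemma after a change of basis on $\mathfrak n/\mathfrak z$: the row of $X_1$ has the single nonzero entry $\ell([X_1,X_{2d}])$.

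The character-formula route buys the explicit constant and the interpretation of $d_\rho$ as a Plancherel density. However, its two substantive inputs, Kirillov's character formula and the Pfaffian form of the canonical orbit measure, are exactly what one would otherwise be citing.
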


\subsection{Schwartz matrix coefficients}  \label{sec:schwartz}
Let
$X_1,\ldots,X_n$ be a weak Malcev basis for $\mathfrak n$  passing through
$
    \mathfrak z(\mathfrak n)
    =
    \mathbb{R}$-$\Span \{X_1,\ldots,X_r\}
$ and let
$
    \phi\colon\mathbb R^{n-r} \to N/Z(N)
$
be the associated coordinate diffeomorphism. 
We define the associated Schwartz space on $N / Z(N)$ by
\[
    \mathcal S(N/Z(N))
    :=
    \bigl\{
        F\in C^\infty(N/Z(N)):
        F\circ\phi\in\mathcal S(\mathbb R^{n-r})
    \bigr\}.
\]
This definition is independent of the weak Malcev basis passing
through $\mathfrak z(\mathfrak n)$. It also agrees with the usual
definition of the Schwartz space on the nilpotent Lie group
$N/Z(N)$ obtained through its exponential map. See, e.g.,
\cite[p.~347]{howe1977on} for an explanation of both facts.

We will also use a characterization of the Schwartz space on $N/Z(N)$ in terms of seminorms. For this, we denote by $d$ a left-invariant Carnot-Carath\'eodory metric on $N/Z(N)$.
For $s \in \R$, we define a weight $w_s : N/Z(N) \to [0, \infty)$  by
\begin{align} \label{eq:weight}
    w_s(x) = (1 + d(x, e))^s, \quad x \in N/Z(N).
\end{align}
The Schwartz space $\mathcal{S}(N/Z(N))$ may then be described as
\begin{equation}
    \mathcal{S}(N/Z(N)) = \bigcap_{s > 0} \bigg\{ f \in C^\infty(N/Z(N)) \: : \:  \big\| (Df) \cdot w_s \big\|_\infty < \infty \; \text{for all} \; D \in \mathfrak{D} \bigg\} , \label{eq:schwartz-intersection}
\end{equation}
where $\mathfrak{D}$ denotes the algebra of all left-invariant differential operators on $N/Z(N)$, see, e.g., \cite{schweitzer1993dense, ludwig1995schwartz, ludwig1988minimal}.

Let $(\rho,\Hrho)$ be an irreducible representation of $N$ that is square-integrable modulo
$Z(N)$. For $f_1, f_2 \in\Hrho^\infty$, define the associated matrix coefficient on $N/Z(N)$ by
\[
    C^\rho_{f_1,f_2}(x)
    :=
    \bigl\langle f_1,\rho(s(x)) f_2 \bigr\rangle,
    \qquad x\in N/Z(N),
\]
where 
$
    s\colon N/Z(N) \to N
$
is a smooth Malcev cross-section for the projection $p : N \to N/Z(N)$.

It is well-known that the matrix coefficient associated to two smooth vectors is a Schwartz function on $N/Z(N)$. Specifically, the following theorem is known, see, e.g., \cite[Theorem 4.5.11]{corwin1990representations} and \cite[Theorem 1.2]{corwin1996lp}, or related results in \cite{howe1977on, pedersen1994matrix}.

\begin{theorem} \label{thm:schwartz_matrix}
Let $(\rho, \Hrho)$ be an irreducible representation of $N$ that is square-integrable modulo $Z(N)$. For every $f_1,f_2 \in\Hrho^\infty$, one has
$
    C^\rho_{f_1, f_2}\in\mathcal S(N/Z(N)).
$ 
Moreover, the map
\[
  C^{\rho} :  \Hrho^\infty\times\Hrho^\infty
    \longrightarrow
    \mathcal S(N/Z(N)),
    \qquad
    (f_1,f_2)\longmapsto C^\rho_{f_1, f_2},
\]
is continuous.
\end{theorem}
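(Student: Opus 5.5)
The plan is to reduce the statement to an explicit computation in a basis realization and then use the Schwartz kernel structure of $\rho(x)$. By Kirillov theory we may assume $\rho = \rho_\ell$ in the basis realization on $L^2(\R^{n-m})$ determined by a weak Malcev basis through a polarization $\mathfrak m$. By \zcref{thm:smooth_schwartz}, we then have $\Hrho^\infty = \mathcal S(\R^{n-m})$ as Fr\'echet spaces. In these coordinates, $\rho(s(x))$ acts by
\[
(\rho(s(x))f)(u) = e^{2\pi i \ell(\psi(x,u))} f(\sigma(x,u)),
\]
where $\psi$ and $\sigma$ are polynomial maps coming from the Baker--Campbell--Hausdorff formula and the Malcev coordinates. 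In particular, $u\mapsto \sigma(x,u)$ is a polynomial diffeomorphism for each $x$, with polynomial inverse. Thus
\[
C^\rho_{f_1,f_2}(x)=\int_{\R^{n-m}} f_1(u)\,\overline{e^{2\pi i\ell(\psi(x,u))} f_2(\sigma(x,u))}\,du
\]
is an oscillatory integral with polynomial phase and Schwartz amplitudes.

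The next step is smoothness with polynomially bounded derivatives. Differentiating under the integral sign in $x$ produces polynomial factors in $(x,u)$ and derivatives of $f_2$. Every left-invariant operator $D\in\mathfrak D$ applied to $C^\rho_{f_1,f_2}$ equals $C^\rho_{f_1,\,d\rho(\tilde D)f_2}$ for a suitable element $\tilde D$ of the universal enveloping algebra. This reduces derivative bounds to the single estimate $\sup_x |C^\rho_{g_1,g_2}(x)|\,w_s(x)$ for smooth $g_1,g_2$, controlled by finitely many seminorms $p_\alpha$.

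The main obstacle is the decay in $x$, and this is where square-integrability, i.e. $\mathfrak n(\ell)=\mathfrak z(\mathfrak n)$, must enter. I would proceed by induction on $\dim N$ along the lines of Corwin--Greenleaf, splitting into two cases.

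\textbf{Case 1: $\dim Z(N)\ge 2$.} Pass to $N/\ker\chi_\ell$, which reduces to a group with one-dimensional center.

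\textbf{Case 2: one-dimensional center.} Use Kirillov's lemma: choose $Z$ spanning $\mathfrak z(\mathfrak n)$, $Y$ with $[X,Y]=Z$ for some $X$, and the codimension-one ideal $\mathfrak n_0$ centralizing $Y$. Then $\rho_\ell=\operatorname{ind}_{N_0}^N\rho_{\ell_0}$, realized on $L^2(\R;L^2(\R^{d-1}))$ as in the compatibility of induction in stages above. Writing $x=\exp(tX)x_0$, the matrix coefficient becomes
\[
\int_\R \langle f_1(u), \rho_{\ell_0}(\cdots) f_2(u+t)\rangle\,du ,
\]
and $d\rho(Y)$ acts as multiplication by $2\pi i\,\ell(Z)u$. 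The $t$-direction gives decay because $f_1(u)$ and $f_2(u+t)$ are both Schwartz in $u$. Decay in the remaining directions comes from the induction hypothesis applied to $\rho_{\ell_0}$, whose stabilizer is square-integrable modulo its own center. Separately, decay in the direction conjugate to $Y$ follows from integrating by parts in $u$ against the oscillating factor $e^{2\pi i\ell(Z)\cdot}$, which is nonzero because $\ell(Z)\neq0$.

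Carrying these polynomial-weight bounds through the coordinate change $\phi$ then shows $C^\rho_{f_1,f_2}\in\mathcal S(N/Z(N))$ by \eqref{eq:schwartz-intersection}.

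Continuity follows from the proof, since every bound is of the form $\|(DC^\rho_{f_1,f_2})w_s\|_\infty\le C\,p_\alpha(f_1)p_\beta(f_2)$ for finitely many seminorms. A bilinear map between Fr\'echet spaces satisfying such estimates is jointly continuous.
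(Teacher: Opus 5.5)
Your outline follows the same inductive scheme as the proof the paper relies on. The statement itself is only cited (Corwin--Greenleaf, Corwin), but the proof of \zcref{thm:continuity-in-t} is modeled on Corwin's argument. At fixed $t$ it is built from your ingredients: reduction to a one-dimensional center, Kirillov's lemma, $\rho_\ell\simeq\ind_{N_0}^N\rho_{\ell_0}$ on $L^2(\R;L^2(\R^{d-1}))$, the $Y$-coordinate entering as the modulation $e^{2\pi i\ell(Z)yu}$, translation in the $X$-coordinate, and the induction hypothesis for $\rho_{\ell_0}$. The architecture is sound, but several steps need repair as written.

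\textbf{The derivative identity.} As stated, $DC^\rho_{f_1,f_2}=C^\rho_{f_1,d\rho(\tilde D)f_2}$ is false. The map $\rho\circ s$ is only projective, so differentiating in $x$ also hits the central phase. For the Heisenberg law $(x,y,z)(x',y',z')=(x+x',y+y',z+z'+xy')$, with central character $e^{2\pi iz}$, $s(x,y)=(x,y,0)$ and $Y$ generating $h\mapsto(0,h,0)$, one finds $\partial_yC^\rho_{f_1,f_2}=C^\rho_{f_1,d\rho(Y)f_2}+2\pi i x\,C^\rho_{f_1,f_2}$. What is true is that $DC^\rho_{f_1,f_2}$ is a finite sum of polynomial multiples of coefficients $C^\rho_{f_1,d\rho(\tilde D_j)f_2}$. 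That suffices, because polynomials are dominated by some $w_r$.

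\textbf{The conjugation.} The element fed into $\rho_{\ell_0}$ is not $x_0$ but its conjugate by $\exp(cX)$, with $c$ depending on the integration variable. In the paper's coordinates, see \eqref{eq:explicit-induced-action}, its $N_0/Z(N_0)$-component is $\gamma(w,u)$. So the induction hypothesis only gives decay in $\gamma(w,u)$. You need that $\gamma(\cdot,u)$ has the polynomial inverse $\gamma(\cdot,-u)$, whence $1+|w|\lesssim(1+|\gamma(w,u)|)^k(1+|u|)^k$. This is used twice: to convert the decay into decay in $w$, and to absorb the polynomial factors in $(w,u)$ (from $\partial_u\alpha$ and $\partial_u\gamma$) that your integrations by parts in $u$ produce. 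Since you apply the hypothesis to the $u$-dependent vectors $f_1(u)$ and $f_2(u+t)$, the bilinear seminorm bounds have to be carried as part of the inductive statement rather than read off at the end.

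\textbf{Two smaller slips.} Integrating by parts in $u$ gives decay in the $Y$-coordinate itself, the variable Fourier dual to $u$. It does not give decay in the direction conjugate to $Y$; that is the $X$-direction, already handled by translation. In Case 1 you should divide by the connected subgroup $\exp(\ker\ell|_{\mathfrak z(\mathfrak n)})$. The quotient $N/\ker\chi_\ell$ has center $\mathbb T$ and is not simply connected, so the induction does not apply to it.

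The paper's (Corwin's) packaging avoids this bookkeeping. It regards $(v,r,s)\mapsto C^{\rho_{\ell_0}}_{f_1(r),f_2(s)}(v)$ as an element of $\mathcal S(\R^2;\mathcal S(\R^{2d-2}))=\mathcal S(\R^{2d})$. It then composes with the polynomial diffeomorphism $P(w,x,u)=(\gamma(w,u),u,x+u)$, where $x$ is the paper's $X$-coordinate, multiplies by $e^{-2\pi i\alpha}$, and takes the partial Fourier transform in $u$. Each of these maps is a continuous automorphism of $\mathcal S(\R^{2d})$. Hence joint decay, all derivatives and continuity come out simultaneously, and no reduction of derivatives to derived vectors is needed. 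Your direct estimates reach the same conclusion once repaired as above.
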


\section{Deformation of square-integrable representations} \label{sec:deformation_representation}
In this section, we continue the notation from \zcref{sec:smooth0} and let $(\rho, \Hrho)$ be an irreducible representation of $N$ that is square-integrable modulo the center $Z(N)$. Recall that then $\dim(N/Z(N)) = 2d$ for some $d \in \mathbb{N}_0$.

Let $\ell \in \mathfrak{n}^*$ be such that $\rho = \rho_{\ell}$. 
For $t > 0$, let $\rho_{t} := \rho_{t \ell}$ be the representation of $N$ associated to the functional $t \ell \in \mathfrak{n}^*$. Note first that
\[
\mathfrak{n}(t \ell) = \mathfrak{n}(\ell) = \mathfrak{z}(\mathfrak{n}), \quad t > 0.
\]
Moreover, if $\mathfrak{m}$ is a polarization for $\ell$, then it is also a polarization for $t \ell$ for any $t>0$. Indeed, $\ell([\mathfrak{m}, \mathfrak{m}]) = 0$ implies $(t\ell)([\mathfrak{m}, \mathfrak{m}]) = 0$ for $t > 0$, and the fact that $\mathfrak{n}(t\ell) = \mathfrak{z}(\mathfrak{n})$  yields that $\dim(\mathfrak{m}) = \frac{1}{2} ( \dim(\mathfrak{n}) + \dim(\mathfrak{n}(t \ell)))$ for $t > 0$, so that
\[ \dim(\mathfrak{n})-\dim(\mathfrak{m}) = \frac{1}{2} \dim(N/Z(N)) = d.\]
Thus, each $\rho_t$ is an irreducible representation of $N$ on $\Hpi$ that is square-integrable modulo $Z(N)$. Using \zcref{lem:pfaffian}, we have the following expression of the formal dimensions.

\begin{lemma}\label{lem:dim-scaled-rep}
The formal degrees $d_{\rho}$ and $d_{\rho_t}$ of $(\rho, \Hrho)$ and $(\rho_t, \mathcal{H}_{\rho_t})$, respectively, are related by
    \[ d_{\rho_t} = t^d d_{\rho}, \quad t > 0 . \]
\end{lemma}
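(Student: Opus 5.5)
The plan is to apply \zcref{lem:pfaffian} to both $\rho = \rho_\ell$ and $\rho_t = \rho_{t\ell}$, using the same strong Malcev basis $Z_1,\ldots,Z_r,X_1,\ldots,X_{2d}$ of $\mathfrak n$ with $\mathfrak z(\mathfrak n) = \mathbb{R}$-$\Span\{Z_1,\ldots,Z_r\}$. We also fix the same normalization of the Haar measure $\mu_{N/Z(N)}$ in both cases. This is legitimate because, as observed just before the statement, $\rho_t$ is an irreducible representation that is square-integrable modulo $Z(N)$. Its realizing functional is $t\ell$, and its radical is $\mathfrak n(t\ell) = \mathfrak z(\mathfrak n)$. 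The dimension count $\dim(N/Z(N)) = 2d$ is the same for both. Since the constant $c>0$ in \zcref{lem:pfaffian} depends only on the Malcev basis and the Haar normalization, the same $c$ serves for all $t>0$:
\[
d_{\rho_t} = c\,\lvert \Pf(t\ell)\rvert, \qquad d_\rho = c\,\lvert\Pf(\ell)\rvert .
\]

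Next, compare the two Pfaffians. The matrix attached to $t\ell$ is $\big((t\ell)([X_i,X_j])\big)_{i,j=1}^{2d} = t\,\big(\ell([X_i,X_j])\big)_{i,j=1}^{2d}$. The Pfaffian is a homogeneous polynomial of degree $d$ in the entries of a $2d\times 2d$ skew-symmetric matrix, so $\Pf(tA) = t^d \Pf(A)$. One can also argue through determinants: $\det(tA) = t^{2d}\det A$, and taking square roots with $t>0$ gives $\lvert\Pf(tA)\rvert = t^d\lvert\Pf(A)\rvert$. Either way,
\[
d_{\rho_t} = c\,t^d\lvert\Pf(\ell)\rvert = t^d d_\rho .
\]

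The only point needing care is the uniformity of the constant $c$. I would check that \zcref{lem:pfaffian} is applied with identical data, namely the same group $N$, the same basis, and the same measure on $N/Z(N)$. Then $c$ cannot depend on the functional. The basis condition does not involve $\ell$ at all, so this is immediate, and the rest is the homogeneity computation above.
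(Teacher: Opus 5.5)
Your proposal is correct and follows essentially the same argument as the paper. Both apply the Pfaffian formula for the formal degree to $\ell$ and $t\ell$ with a fixed strong Malcev basis and Haar measure, so that the constant $c$ is independent of $t$. Both then use the degree-$d$ homogeneity $\lvert\Pf(t\ell)\rvert = t^d\lvert\Pf(\ell)\rvert$ to conclude $d_{\rho_t} = t^d d_\rho$.
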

\begin{proof}
     With respect to the fixed Haar measure and the fixed
basis used in \zcref{lem:pfaffian}, we have
$
    d_{\rho_t}=c\lvert\Pf(t\ell)\rvert,
$
where the constant $c>0$ is independent of $t$.
Since $ \mathfrak n/\mathfrak z(\mathfrak n)$ has dimension $2d$,
the Pfaffians corresponding to $t\ell, \ell \in \mathfrak{n}^*$ satisfy
\[
    \lvert\Pf(t\ell)\rvert
    =t^d \lvert\Pf(\ell)\rvert.
\]
Consequently, we have
$
    d_{\rho_t}
    =ct^d\lvert\Pf(\ell)\rvert
    =t^d d_\rho.
$
\end{proof}

For the family of representations $(\rho_t)_{t > 0}$, one can choose a fixed polarization and a common basis realization on  $L^2 (\R^{d})$.  By \zcref{thm:smooth_schwartz}, their smooth vectors are then given by
\[ \mathcal{H}_{\rho_t}^{\infty} = \mathcal{S}(\R^{d}), \quad t > 0. \]
 In addition, it follows from \zcref{thm:schwartz_matrix} that each matrix coefficient
\[
C^{\rho_t}_{f_1,f_2} \in \mathcal{S}(N/Z(N)), \quad t > 0, \;\; f_1, f_2 \in \mathcal{S}(\R^d).
\]

The following convergence theorem is the main result of this section. Our proof is modeled on the proof of \zcref{thm:schwartz_matrix} given in \cite{corwin1996lp}.

\begin{theorem}\label{thm:continuity-in-t}
 Let $(\rho,\Hrho)$ be an irreducible unitary representation of $N$
that is square-integrable modulo $Z(N)$, and let $\ell\in\mathfrak n^*$
be such that $\rho \simeq \rho_\ell$. 

There exists a common basis realization of the representations
$\rho_t:=\rho_{t\ell}$, $t>0$, on $L^2(\mathbb R^d)$
such that the coefficient maps
\[
    C^{\rho_t}\colon
    \mathcal S(\mathbb R^d)\times\mathcal S(\mathbb R^d)
    \longrightarrow \mathcal S(N/Z(N))
\]
depend continuously on $t\in(0,\infty)$ in the topology of
uniform convergence on products of bounded subsets of
$\mathcal S(\mathbb R^d)$.
\end{theorem}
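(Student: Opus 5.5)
We would fix a polarization $\mathfrak m$ for $\ell$, which by the discussion preceding \zcref{lem:dim-scaled-rep} is a polarization for every $t\ell$. We then fix a weak Malcev basis $X_1,\dots,X_n$ of $\mathfrak n$ passing through $\mathfrak z(\mathfrak n)\subseteq\mathfrak m\subseteq\mathfrak n$ and realize all $\rho_t$ on $L^2(\mathbb R^d)$ via the same cross-section. The first step is to make the dependence on $t$ explicit. In the basis realization, one has
\[
\rho_t(x)f(u)=e^{2\pi i t\,\ell(P(x,u))}f(Q(x,u)),
\]
where $Q\colon N\times\mathbb R^d\to\mathbb R^d$ is the polynomial map describing the action of $N$ on $N/M\cong\mathbb R^d$, and $P(x,u)\in\mathfrak m$ is the logarithm of the $M$-component, also polynomial. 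Both are independent of $t$. Hence
\[
C^{\rho_t}_{f_1,f_2}(x)=\int_{\mathbb R^d} f_1(u)\,\overline{f_2(Q(s(x),u))}\,e^{-2\pi i t\,\ell(P(s(x),u))}\,du ,
\]
and the only $t$-dependence is through the polynomial phase $t\,\ell(P)$.

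The second step follows the argument of \cite{corwin1996lp} for \zcref{thm:schwartz_matrix}, keeping track of $t$ uniformly on compact subsets of $(0,\infty)$. We would induct on $\dim N$ via induction in stages as in \zcref{sec:smooth}: choose a codimension-one ideal $\mathfrak n_0\supseteq\mathfrak m$, so that $\rho_t\cong\operatorname{ind}_{N_0}^N(\rho_{t,0})$ with $\rho_{t,0}=\rho_{t\ell_0}$. Alternatively, we would use the Kirillov-type reduction in which one coordinate of $x$ acts by translation in one variable $u_j$ and another acts by multiplication by $e^{2\pi i t(a u_j+\text{lower order})}$. With the second reduction, the coefficient becomes a partial Fourier transform in the scaled variable $t u_j$ of a function built from a lower-dimensional coefficient. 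The key estimates of \cite{corwin1996lp} convert polynomial weights $x^\beta$ into derivatives in $u$ and derivatives $\partial_x^\gamma$ into multiplication by polynomials times powers of $t$. Tracking these substitutions produces bounds of the form
\[
\sup_x |x^\beta\partial^\gamma_x C^{\rho_t}_{f_1,f_2}(x)|\le C_K\, p(f_1)\,q(f_2), \qquad t\in K,
\]
with continuous Schwartz seminorms $p,q$ independent of $t\in K$. This gives local uniform boundedness in $t$ of $C^{\rho_t}$ as a family of continuous bilinear maps.

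The third step is continuity itself. We would differentiate in $t$ under the integral. Formally,
\[
\partial_t C^{\rho_t}_{f_1,f_2}(x)=-2\pi i\int f_1(u)\,\overline{f_2(Q)}\,\ell(P(s(x),u))\,e^{-2\pi i t\ell(P)}\,du .
\]
The factor $\ell(P(s(x),u))$ is a polynomial in $(x,u)$, and we would rewrite it in terms of the action of the derived representation. Concretely, $\ell(P(s(x),u))$ is polynomial in $x$ of controlled degree with coefficients polynomial in $u$. Multiplication by $u$-polynomials maps $\mathcal S(\mathbb R^d)$ continuously into itself, and multiplication by $x$-polynomials is controlled by the weighted seminorms of step two. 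This yields the estimate
\[
\|C^{\rho_t}_{f_1,f_2}-C^{\rho_{t'}}_{f_1,f_2}\|_{\alpha}\le |t-t'|\,C_K\,p'(f_1)q'(f_2).
\]
Taking the supremum over bounded sets $B_1\times B_2$ gives uniform convergence as $t'\to t$, which is the statement.

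The main obstacle is step two, namely uniformity in $t$. The argument of \cite{corwin1996lp} uses an inverse Fourier transform and changes of variables whose Jacobians involve the coefficients of $t\ell$ on brackets. These scale like powers of $t$ and stay bounded and bounded away from zero on compact $K\subset(0,\infty)$, since $\Pf(t\ell)=t^d\Pf(\ell)\neq0$. We would first verify this for the Heisenberg-type reduction step and then propagate it through the induction.
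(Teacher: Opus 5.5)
Your proposal is correct in outline, but it splits the work differently from the paper. The paper runs the Corwin--Greenleaf induction (reduction to $\dim Z(N)=1$, Kirillov's lemma with $\mathfrak n_0=\mathfrak c_{\mathfrak n}(X_1)$ and $\ell(X_1)=0$, and $C^{\rho_t}_{f_1,f_2}=D_t\mathcal F_uA^t_{f_1,f_2}$). It propagates \emph{convergence} through that induction: the induction hypothesis gives convergence of the lower-dimensional coefficients, the polynomial change of variables transports it, the phase $e^{-2\pi it\alpha}$ is handled by \zcref{lem:howe}, and $D_t\to D_{t_0}$ uniformly on bounded sets.

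You use the same induction only to get locally uniform bounds, i.e.\ equicontinuity of $\{C^{\rho_t}\}_{t\in K}$. For that you need only the $t$-uniform multiplier estimate \eqref{eq:uniform_seminorm}, the boundedness of the family \eqref{eq:bounded_functions}, and equicontinuity of $(D_t)_{t\in K}$, all of which appear in the paper's proof. You then get continuity in one global step, using that $t$ enters the coefficient only through the phase $e^{-2\pi it\ell(P)}$. Writing $\ell(P(s(x),u))=\sum_\beta x^\beta q_\beta(u)$ gives $\partial_tC^{\rho_t}_{f_1,f_2}=-2\pi i\sum_\beta x^\beta C^{\rho_t}_{q_\beta f_1,f_2}$. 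The uniform bounds plus the fundamental theorem of calculus then give a locally Lipschitz estimate in $t$. This step is sound: differentiation under the integral is justified by local domination, jointly in $(t,x)$. Your route buys a quantitative statement and isolates uniformity as the only $t$-sensitive ingredient of the induction. The paper's route avoids the extra differentiation and works purely with qualitative convergence inside the induction.

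Three points matter when you write out Step 2.
\begin{enumerate}
\item Only your second (Kirillov-type) reduction closes the induction. For an arbitrary codimension-one ideal $\mathfrak n_0\supseteq\mathfrak m$ one can have $\mathfrak z(\mathfrak n_0)\subsetneq\mathfrak n_0(\ell_0)$. Then $\rho_{t\ell_0}$ is not square-integrable modulo $Z(N_0)$, and the induction hypothesis does not apply.
\item The polarization and Malcev basis are then dictated by the induction, so Step 1 must be applied to that realization. This is harmless, since every basis realization has your form $e^{2\pi it\ell(P)}f(Q)$.
\item The Pfaffian is not what controls uniformity. The only $t$-dependent change of variables is $y\mapsto ty$. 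What matters is that $t$ stays in a compact subset of $(0,\infty)$, so the factors $t^{-k}$ produced when weights $y^k$ are traded for $\partial_u^k$ remain bounded.
\end{enumerate}
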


\begin{proof}
    The proof is modeled on that of \cite[Theorem 1.2]{corwin1996lp} for showing \zcref{thm:schwartz_matrix}.

    Recall that $\dim(N/Z(N)) = 2d$ for some $d \in \mathbb{N}_0$. We prove the theorem by induction on $d$. For $d=0$ the theorem is immediate. Hence, we assume that $d\geq 1$ and that the theorem holds in any setting where the respective dimension is given by $2(d-1)$. We let $t_0 \in (0, \infty)$ and let $I \subseteq (0, \infty)$ be a compact interval whose interior contains $t_0$ and assume throughout that $t \in I$. Fix bounded sets $\mathcal{B}_1, \mathcal{B}_2 \subseteq \mathcal{S}(\R^d)$. We will prove convergence $C^{\rho_t}_{f_1, f_2} \to C^{\rho_{t_0}}_{f_1, f_2}$ uniformly for $f_1 \in \mathcal{B}_1$ and $ f_2 \in \mathcal{B}_2$.

  Since $\rho_\ell$ is
square-integrable modulo the center, we have that
$
\mathfrak n(\ell)=\mathfrak z(\mathfrak n)
$.  Define the normal subgroup
$
K :=\exp_N\bigl(\ker(\ell|_{\mathfrak z(\mathfrak n)})\bigr).
$
Then every $\rho_t$ factors through $N/K$, the center of $N/K$ is
$Z(N)/K$, and
$
(N/K)/(Z(N)/K)\cong N/Z(N).
$
Lifting a polarization and an adapted Malcev basis from $N/K$
to $N$ gives basis realizations with identical coefficient maps
under the above identification of quotient groups and Schwartz
spaces. As such, it suffices to prove the assertion for $N/K$,
whose center is one-dimensional. In what follows, we will therefore assume that $\dim(Z(N)) = 1$.

    Let $X_0 \in \mathfrak{z} (\mathfrak{n}) = \mathfrak{n}(\ell)$ be a nonzero element satisfying $\ell(X_0) = 1$, which is possible as $\mathfrak{n}$ is nonabelian.  
    By Kirillov's lemma \cite[Lemma~1.1.12]{corwin1990representations}, there exist
$X_1, X_{2d} \in \mathfrak{n}$ such that $[X_{2d}, X_1] = X_0$ 
and such that the centralizer $\mathfrak{c}_{\mathfrak{n}}(X_1) := \{X \in \mathfrak{n} : [X,X_1]=0\}$ is an ideal of
codimension one.
Replacing $X_1$ by $X_1-\ell(X_1)X_0$, we may assume that
$\ell(X_1)=0$.
Using a strong Malcev basis  for the chain of ideals
\[
    \mathbb{R}X_0
    \subseteq \mathbb{R}X_0 \oplus \mathbb{R}X_1
    \subseteq \mathfrak{c}_{\mathfrak{n}}(X_1)
    \subseteq \mathfrak{n},
\]
we obtain a strong Malcev basis
$X_0, X_1, \ldots, X_{2d}$ for $\mathfrak{n}$ satisfying
\begin{align} \label{eq:choice_basis}
    [X_1, X_j] = 0, \quad 0 \leq j < 2d, \quad
    \ell(X_1) = 0, \quad [X_{2d}, X_1] = X_0.
\end{align}
Lastly, we set
$\mathfrak{n}_0 := \mathfrak{c}_{\mathfrak{n}}(X_1)  = \mathbb{R}$-$\Span\{X_0, \ldots, X_{2d-1}\}$.

    The remainder of the proof is split into three steps.
    \\~\\
    \textbf{Step 1.} In this step, we provide a realization of the representations $\rho_t$ and associated matrix coefficients that is convenient for applying the induction hypothesis. 
    
    Set $N_0 := \exp_N (\mathfrak{n}_0)$ and let $\ell_0 := \ell |_{\mathfrak{n}_0}$. Given $t> 0$, let $\rho_{0,t}$ be the representation of $N_0$ associated with $t \ell_0$.
Let \(\mathfrak m_0\subseteq\mathfrak n_0\) be a polarization for
\(\ell_0\), and put
\(M_0=\exp_N(\mathfrak m_0)\).  Note that $\mathfrak{m}_0$ is also a polarization for $t \ell_0$ for every $t>0$. Moreover, the normalization $\ell(X_1) = 0$
together with
\(\mathfrak n(\ell)=\mathbb RX_0\), implies that
$
    \mathfrak z(\mathfrak n_0)
    =\mathfrak n_0(\ell_0)
    =\mathbb RX_0\oplus\mathbb RX_1
$, and thus
\[
    \dim(\mathfrak m_0)
    =\frac{\dim (\mathfrak n_0) +\dim (\mathfrak n_0(\ell_0))}{2}
    =\frac{2d+2}{2}
    =d+1.
\]
Since \(\mathfrak m_0\) is subordinate to \(t\ell\) and
\[
    \frac{\dim (\mathfrak n) +\dim\mathfrak n(t\ell)}{2}
    =\frac{(2d+1)+1}{2}
    =d+1,
\]
it is also a polarization for \(t\ell\).  Consequently, the
representations may be realized as
\[
    \rho_{0,t}
    =\ind_{M_0}^{N_0}\chi_{t\ell_0}
    \qquad \text{and} \qquad
    \rho_t
    =\ind_{M_0}^{N}\chi_{t\ell}.
\]
Since
$
    \mathfrak z(\mathfrak n_0)
    =\mathfrak n_0(\ell_0)
$, the representation $\rho_{0,t}$ is square-integrable representation modulo $Z(N_0)$. Moreover, since $\dim(N_0/Z(N_0))=2d-2$, the induction hypothesis
allows us to choose the polarization $\mathfrak m_0$ and
a common basis realization of the representations $\rho_{0,t}$
on $L^2(\mathbb R^{d-1})$ for which the conclusion of the theorem holds.
Using that \(\chi_{t\ell} =\chi_{t\ell_0}\), induction in stages gives the equivalence
\[
    \rho_t
    \simeq
    \ind_{N_0}^{N}\rho_{0,t}.
\]
Using this common realization of $\rho_{0,t}$, we realize
$\rho_t=\ind_{N_0}^{N}\rho_{0,t}$ on
$L^2(\mathbb R;L^2(\mathbb R^{d-1}))$.
In particular, its smooth vectors are given by
$
    \mathcal H_{\rho_t}^{\infty}
    =\mathcal S(\mathbb R^d)
    \cong\mathcal S\bigl(\mathbb R;
                        \mathcal S(\mathbb R^{d-1})\bigr),
$
see \zcref{sec:smooth}. 

We will next obtain the action of  $\rho_t = \ind_{N_0}^{N}\rho_{0,t}$
 and describe the associated matrix coefficients.  Since \(\mathfrak n_0\) is an ideal of codimension one, we can write
\[
    N=N_0\rtimes\exp_N(\mathbb RX_{2d}).
\]
We use the coordinates 
\[
    (z,y,w,x)
    =
    \exp_N(zX_0)\exp_N(yX_1)
    \exp_N\left(\sum_{j=2}^{2d-1}w_{j-1}X_j\right)
    \exp_N(xX_{2d}),
    \qquad w\in\R^{2d-2},
\]
and represent elements of \(N/Z(N)\) by setting \(z=0\).  Using the
cross-section \(u\mapsto\exp_N(-uX_{2d})\) of \(N/N_0\), the action of $\rho_t = \ind_{N_0}^N \rho_{0,t}$ is simply given by
\begin{align}
    (\rho_t(0,y,w,x)f)(u)
    =
    \rho_{0,t}\bigl(
       (0,0,0,u)(0,y,w,0)
       (0,0,0,-u)
    \bigr)f(x+u)
    \label{eq:induced-action}
\end{align}
for \(f \in L^2(\R;L^2(\R^{d-1}))\), see, e.g., \cite[Section 2.4, p. 79]{kaniuth2013induced} for a semidirect product realization.
In order to simplify the action \eqref{eq:induced-action} further,  first note that
\begin{equation}
    (0,0,0,u)(0,0,w,0)(0,0,0,-u)
    =
    \bigl(\alpha(w,u),\beta(w,u),\gamma(w,u),0\bigr).
    \label{eq:conjugation}
\end{equation}
The maps 
\(\alpha,\beta\), and \(\gamma\) are polynomial functions and, for fixed \(u\in\R\), the map
$
    w\longmapsto\gamma(w,u)
$
has a polynomial inverse, namely $\gamma(\gamma(w,-u),u)=w$.
Second, since $
    [X_{2d},X_1]=X_0
$ and $X_0$ is central, the conjugation formula for the exponential map yields
\begin{align} \label{eq:conjugate2}
    (0,0,0,u)(0,y,0,0)(0,0,0,-u)=(uy,y,0,0).
\end{align}
A combination of \eqref{eq:conjugation} and \eqref{eq:conjugate2} gives
\begin{align*}
    (0, 0, 0, u)(0,y,w,0) (0, 0, 0, -u)
    &=
    (uy,y,0,0)
    \bigl(\alpha(w,u),\beta(w,u),\gamma(w,u),0\bigr)\\
    &=
    \bigl(
       uy+\alpha(w,u),\,
       y+\beta(w,u),\,
       \gamma(w,u),\,
       0
    \bigr).
\end{align*}
Hence, since the central character of \(\rho_{0,t}\) on
\(\exp_N(\mathbb RX_0)\) is \(z\mapsto e^{2\pi itz}\), while
\(\rho_{0,t}\) is trivial on \(\exp_N(\mathbb RX_1)\), the action
\eqref{eq:induced-action} becomes
\begin{equation}
    (\rho_t(0,y,w,x)f)(u)
    =
    e^{2\pi it(uy+\alpha(w,u))}
    \rho_{0,t}(0,0,\gamma(w,u),0)f(x+u).
    \label{eq:explicit-induced-action}
\end{equation}
It follows that the associated matrix coefficient is given by
\begin{equation}
    C^{\rho_t}_{f_1,f_2}(y,w,x)
    =
    \int_{\R}e^{-2\pi ityu}
    A^t_{f_1,f_2}(w,x,u)\,du,
    \label{eq:coefficient-formula}
\end{equation}
where
\begin{equation}
    A^t_{f_1,f_2}(w,x,u)
    :=
    e^{-2\pi it\alpha(w,u)}
    C^{\rho_{0,t}}_{f_1(u),f_2(x+u)}
       \bigl(\gamma(w,u)\bigr).
    \label{eq:amplitude}
\end{equation}
We will refer to $A^t_{f_1,f_2}$ as the \emph{amplitude function} of the matrix coefficient $C^{\rho_t}_{f_1,f_2}$. 
\\~\\
\textbf{Step 2.} In this step, we will show that the amplitude functions 
\begin{align} \label{eq:amplitude_convergence}
    A^t_{f_1,f_2} \to A^{t_0}_{f_1, f_2} \quad \text{as} \quad t \to t_0 
\end{align} 
in $\mathcal{S}(\R^{2d})$, uniformly for $f_1 \in \mathcal{B}_1$ and $f_2 \in \mathcal{B}_2$.

In order to prove the claim \eqref{eq:amplitude_convergence}, we first show that the induction hypothesis implies that the functions $(v, r, s) \mapsto C^{\rho_{0,t}}_{f_1(r), f_2(s)} (v)$ converge to $(v, r, s) \mapsto C^{\rho_{0,t_0}}_{f_1(r), f_2(s)} (v)$ in $\mathcal{S}(\R^{2d})$, uniformly for $f_1 \in \mathcal{B}_1$ and $f_2 \in \mathcal{B}_2$. Under the identification
$
    \mathcal S(\mathbb R^{2d})
    =
    \mathcal S\bigl(
        \mathbb R^2;\mathcal S(\mathbb R^{2d-2})
    \bigr),
$
this means that, for every Schwartz seminorm $q$ on $\mathcal S(\mathbb R^{2d-2})$ and every $a,b,L\in\mathbb N_0$,
\begin{align}
\sup_{r,s\in\mathbb R}
 &(1+|r|+|s|)^L
 q\left(
 \partial_r^a\partial_s^b
 \left(
 C^{\rho_{0,t}}_{f_1(r),f_2(s)}
 -
 C^{\rho_{0,t_0}}_{f_1(r),f_2(s)}
 \right)
 \right)
 \to 0, \quad \text{as} \quad t \to t_0,
\label{eq:coefficient_vector_valued}
\end{align}
uniformly in $f_1\in\mathcal B_1$ and $f_2\in\mathcal B_2$.
For this, consider  $C^{\rho_{0,t}} : \mathcal{S}(\R^{d-1}) \times \mathcal{S}(\R^{d-1}) \to \mathcal{S}(\R^{2d -2})$ given by $(h_1, h_2) \mapsto C^{\rho_{0,t}}_{h_1, h_2}$. Note that $C^{\rho_{0,t}}$ is well-defined and continuous by \zcref{thm:schwartz_matrix}. Moreover, the induction hypothesis gives that
\[
C^{\rho_{0,t}} \to C^{\rho_{0,t_0}} \quad \text{as} \quad t \to t_0
\]
uniformly on products of bounded subsets of $\mathcal{S}(\R^{d-1})$. Note that under the canonical identification 
$\mathcal{S}(\R^d) = \mathcal{S}(\R, \mathcal{S}(\R^{d-1}))$ of a function $f \in \mathcal{S}(\R^d)$ with $f(r, \cdot) \in \mathcal{S}(\R^{d-1})$, 
the assumption that $\mathcal{B} \subseteq \mathcal{S}(\R^d)$ is bounded is equivalent to, for every $L, a \in \mathbb{N}_0$, the set
\begin{align} \label{eq:boundedset}
\big\{ (1+|r| )^L \partial_r^a f (r, \cdot) : f \in \mathcal{B}, r \in \mathbb{R} \big\}
\end{align}
being bounded in $\mathcal{S}(\R^{d-1})$. To ease notation, we write $f^{(a)} (r) := \partial_r^a f (r, \cdot)$. Then we have that
\[
\partial_r^{a} \partial_s^{b} C^{\rho_{0,t}}_{f_1 (r), f_2 (s)} (v) = C^{\rho_{0,t}}_{f_1^{(a)} (r), f_2^{(b)} (s)} (v), \quad v \in \R^{2d - 2}, \;\; r,s \in \R.
\]
Since
$
    (1+|r|+|s|)^L
    \leq
    (1+|r|)^L(1+|s|)^L,
$
the left-hand side of \eqref{eq:coefficient_vector_valued} is bounded by
\begin{align*}
\sup_{r,s\in\mathbb R}
 q\Big(
 &C^{\rho_{0,t}}_{
      (1+|r|)^Lf_1^{(a)}(r),
      (1+|s|)^Lf_2^{(b)}(s)}
 -
 C^{\rho_{0,t_0}}_{
      (1+|r|)^Lf_1^{(a)}(r),
      (1+|s|)^Lf_2^{(b)}(s)}
 \Big),
\end{align*}
which tends to zero as $t \to t_0$ by the induction hypothesis, as claimed.

We next use the convergence \eqref{eq:coefficient_vector_valued} to prove the claim \eqref{eq:amplitude_convergence} for the amplitude functions. Define the polynomial map $P : \R^{2d} \to \R^{2d}$ by
$
P(w,x,u) = \bigl(\gamma(w,u),u,x+u\bigr).
$
The identity $\gamma(\gamma(w,u),-u)=w$ gives that its inverse is given by
$
P^{-1}(v,r,s)=\bigl(\gamma(v,-r),s-r,r\bigr).
$
Thus, $P$ is a polynomial function with polynomial inverse. It follows therefore from \cite[p.~346]{howe1977on} that the map $F\mapsto F\circ P$ is a continuous automorphism of $\mathcal{S}(\R^{2d})$. Applying this map to the convergence \eqref{eq:coefficient_vector_valued} gives that
\begin{align} \label{eq:unmodulated_convergence}
C^{\rho_{0,t}}_{f_1(u),f_2(x+u)}\bigl(\gamma(w,u)\bigr)
\to
C^{\rho_{0,t_0}}_{f_1(u),f_2(x+u)}\bigl(\gamma(w,u)\bigr)
\end{align}
in $\mathcal{S}(\R^{2d})$, uniformly for $f_1 \in \mathcal{B}_1$ and $f_2 \in \mathcal{B}_2$.

We will next show that the set of functions
\begin{align} \label{eq:bounded_functions}
(w,x,u)\mapsto
C^{\rho_{0,t}}_{f_1(u),f_2(x+u)}
\bigl(\gamma(w,u)\bigr),
\qquad
t\in I,\quad f_1\in\mathcal B_1,\quad f_2\in\mathcal B_2,
\end{align}
forms a bounded subset of $\mathcal S(\R^{2d})$. 
Indeed, the induction
hypothesis states that $t\mapsto C^{\rho_{0,t}}$ is continuous in the
topology of uniform convergence on products of bounded subsets of
$\mathcal S(\R^{d-1})$. 
In particular, applying this to bounded sets of the form \eqref{eq:boundedset}, it follows that for
every Schwartz seminorm $q$ on $\mathcal S(\R^{2d-2})$ and every
$a,b,L\in\mathbb N_0$,
\[
\sup_{\substack{t\in I,\ f_1\in\mathcal B_1,\ f_2\in\mathcal B_2\\
                r,s\in\R}}
q\left(
C^{\rho_{0,t}}_{
    (1+|r|)^Lf_1^{(a)}(r),
    (1+|s|)^Lf_2^{(b)}(s)}
\right)
<\infty,
\]
where we used that $I$ is compact.
This shows that the
functions
\[
(v,r,s)\mapsto C^{\rho_{0,t}}_{f_1(r),f_2(s)}(v),
\qquad
t\in I,\quad f_1\in\mathcal B_1,\quad f_2\in\mathcal B_2,
\]
form a bounded subset of $\mathcal S(\R^{2d})$. Using again that $F \mapsto F \circ P$ is continuous on $\mathcal{S}(\R^{2d})$, this also shows the boundedness of the functions \eqref{eq:bounded_functions}.

Lastly, we show the claim \eqref{eq:amplitude_convergence}. For this, write
\begin{align*}
\big(A^t_{f_1,f_2}-A^{t_0}_{f_1,f_2} \big)(w,x,u)
&=e^{-2\pi it_0\alpha}
\left(
C^{\rho_{0,t}}_{f_1(u),f_2(x+u)}\bigl(\gamma(w,u)\bigr)
-
C^{\rho_{0,t_0}}_{f_1(u),f_2(x+u)}\bigl(\gamma(w,u)\bigr)
\right)\\
&\quad \quad \quad +
\left(e^{-2\pi it\alpha}-e^{-2\pi it_0\alpha}\right)
C^{\rho_{0,t}}_{f_1(u),f_2(x+u)}\bigl(\gamma(w,u)\bigr).
\end{align*}
The first term tends to zero in $\mathcal S(\R^{2d})$, uniformly for
$f_1\in\mathcal B_1$ and $f_2\in\mathcal B_2$, by
\eqref{eq:unmodulated_convergence} and the continuity of multiplication
by the fixed function $e^{-2\pi it_0\alpha}$, cf. \cite[Lemma 3.1]{howe1977on}. The second term tends to zero in $\mathcal{S}(\R^{2d})$ using \zcref{lem:howe} and the boundedness of the functions \eqref{eq:bounded_functions}. In combination, this shows the claim \eqref{eq:amplitude_convergence}.
\\~\\
\textbf{Step 3.} In this final step, we deduce the desired convergence of the matrix coefficients from the convergence of the amplitude functions established in Step 2. 

We start by rewriting the matrix coefficients. 
Let $\mathcal F_u$ denote the partial Fourier transform
in the $u$-variable, given by
\[
(\mathcal F_uF)(w,x,\xi)
=
\int_{\R}e^{-2\pi i\xi u}F(w,x,u)\,du,
\]
and, for $t>0$, define
\[
D_tF(y,w,x):=F(w,x,ty).
\]
Then the matrix coefficient formula \eqref{eq:coefficient-formula} can simply be written as
$
C^{\rho_t}_{f_1,f_2}
=
D_t\mathcal F_u A^t_{f_1,f_2}.
$
The partial Fourier transform is a continuous automorphism of
$\mathcal S(\R^{2d})$. Also, for each $t>0$, the map $D_t$ is a continuous automorphism of
$\mathcal S(\R^{2d})$, see, e.g., \cite[p. 346]{howe1977on}. Moreover, the map $t\mapsto D_t$ is continuous in the topology
of uniform convergence on bounded subsets. Since
$I \subseteq (0,\infty)$ is compact, the family $(D_t)_{t\in I}$ is
equicontinuous on $\mathcal{S}(\R^{2d})$.

For finishing the proof, we  write
\begin{align*}
C^{\rho_t}_{f_1,f_2}-C^{\rho_{t_0}}_{f_1,f_2}
&=
D_t\mathcal F_u
\left(A^t_{f_1,f_2}-A^{t_0}_{f_1,f_2}\right) +
(D_t-D_{t_0})\mathcal F_uA^{t_0}_{f_1,f_2}.
\end{align*}
The first term tends to zero in $\mathcal S(\R^{2d})$, uniformly for
$f_1\in\mathcal B_1$ and $f_2\in\mathcal B_2$, by
\eqref{eq:amplitude_convergence}, the continuity of $\mathcal F_u$,
and the equicontinuity of $(D_t)_{t\in I}$. Moreover, the functions
$\mathcal F_u A^{t_0}_{f_1,f_2}$, with $f_1\in\mathcal B_1$ and
$f_2\in\mathcal B_2$, form a bounded subset of
$\mathcal S(\R^{2d})$. Hence the second term tends to zero uniformly
because $D_t\to D_{t_0}$ uniformly on bounded subsets. In combination, this completes the
proof.
\end{proof}

\section{Coherent systems} \label{sec:coherent}
In this section, we consider coherent systems that are indexed by discrete subsets of the quotient group $G := N/Z(N)$ of a connected, simply connected, nonabelian, nilpotent Lie group $N$. Throughout, we let $\mu_G$ denote a fixed left Haar measure on $G$ and let $d$ denote a left-invariant Carnot-Carath\'eodory metric on $G$.

\subsection{Projective representations}
For $t> 0$, we let $(\rho_t, \Hrho)$ be the irreducible unitary representations of  $N$ that are square-integrable modulo $Z(N)$ as constructed in \zcref{sec:deformation_representation}.  In particular, we recall that we identify $\rho = \rho_1$. 

For a smooth cross-section $s \colon G \to N$ of the quotient map $p : N \to G$ as in \zcref{sec:malcev}, we define the maps
\[
\pi_t := \rho_t \circ s : G \to \mathcal{U}(\Hrho).
\]
Then each map $\pi_t$ is a projective unitary representation of $G$ on $\Hpi$, i.e., it satisfies $\pi_t (e) = I_{\Hpi}$ and
\[
    \pi_t(x)\pi_t(y) = \sigma_t(x,y)\pi_t(xy) , \quad x, y \in G,
\]
for the smooth $2$-cocycle $\sigma_t : G \times G \to \mathbb{T}$ given by
\[
    \sigma_t(x,y) = \chi_t(s(x)s(y)s(xy)^{-1}), \quad x,y \in G,
\]
where $\chi_t : Z(N) \to \mathbb{T}$ denotes the central character characterized by $ \rho_t|_{Z(N)} = \chi_t \cdot I_{\Hrho}$, see, e.g., \cite[Proposition 5]{aniello2006square}. Moreover, if $s' : G \to N$ is any other cross-section for $p : N \to G$ and $\pi'_t := \rho_t \circ s'$, then $\pi_t$ and $\pi'_t$ are related by $\pi_t(x) = v_t(x) \cdot \pi'_t (x)$ for a Borel function $v_t : G \to \mathbb{T}$, see, e.g., \cite[Proposition 3]{aniello2006square}. 
We use the common realization provided by \zcref{thm:continuity-in-t}, and identify $\pi = \pi_1$ using a fixed unitary intertwiner.

Recall that the central character of $\rho_t$ is given by $\chi_t(\exp_N(Z)) = e^{2\pi i t \ell(Z)}$ for $Z \in \mathfrak{z}(\mathfrak{n})$. Since $s(x)s(y)s(xy)^{-1} \in Z(N)$ for all $x,y \in G$, we may form
\[ \omega(x,y) = \ell \big( \exp_N^{-1}(s(x)s(y)s(xy)^{-1}) \big) , \quad x, y \in G, \]
so that the 2-cocycle can be written explicitly as
\[ \sigma_t(x,y) = e^{2\pi i t \omega(x,y)} , \quad x,y \in G. \]
We will use the following lemma on the convergence of 2-cocycles. Here, we recall that the weight functions $w_s : G \to [0, \infty)$, $s > 0$, are given as in \eqref{eq:weight}.

\begin{lemma}\label{lem:cocycle-convergence}
For any $s > 0$, we have that
\[ \lim_{t \to 1} \sup_{x,y \in G} \frac{ |\sigma_t(x,y) - \sigma(x,y) |}{ w_s(x) w_s(y) }  = 0 . \]
\end{lemma}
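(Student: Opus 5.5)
The plan is to reduce the statement to a polynomial growth bound for the real-valued function $\omega$. Since $\sigma_t(x,y) = e^{2\pi i t\omega(x,y)}$ and $\sigma = \sigma_1$, the elementary inequality $|e^{i a} - e^{i b}| \leq \min\{2, |a-b|\}$ gives
\[
|\sigma_t(x,y) - \sigma(x,y)| \leq \min\bigl\{2, \, 2\pi |t-1| \, |\omega(x,y)|\bigr\}, \quad x,y \in G.
\]
It therefore suffices to find some $k > 0$ and $C > 0$ with
\[
|\omega(x,y)| \leq C\, w_k(x) w_k(y) \quad \text{for all } x,y \in G.
\]
Given such a bound, we fix $s > 0$ and split into two cases. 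Where $w_{k}(x)w_k(y) \leq R$, the quotient is at most $2\pi |t-1| C R$. Where $w_k(x)w_k(y) > R$, it is at most $2/(w_s(x)w_s(y)) \leq 2 R^{-s/k}$. Choosing $R$ large first and then $t$ close to $1$ makes the supremum arbitrarily small. Note that this argument also works when $s < k$, because the cap $2$ handles the region where $\omega$ is large.

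To obtain the polynomial bound, we work in the Malcev coordinates $\phi : \mathbb{R}^{2d} \to G$ from \zcref{sec:malcev}. The cross-section $s$ is given by products of exponentials, and group multiplication in exponential or Malcev coordinates of a nilpotent group is polynomial. Hence $(x,y) \mapsto \exp_N^{-1}(s(x)s(y)s(xy)^{-1})$ is a polynomial map $\mathbb{R}^{2d}\times\mathbb{R}^{2d} \to \mathfrak{z}(\mathfrak{n})$, and $\omega$, being $\ell$ applied to it, is a real polynomial in the coordinates of $(x,y)$. Consequently $|\omega(x,y)| \leq C (1+|\phi^{-1}(x)|)^m (1+|\phi^{-1}(y)|)^m$ for some degree $m$.

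The remaining step, which I expect to be the main (though standard) obstacle, is comparing the Euclidean norm of Malcev coordinates with the Carnot--Carath\'eodory distance. Here I would invoke the well-known polynomial comparison on nilpotent groups: there are constants $C', q > 0$ with $1 + |\phi^{-1}(x)| \leq C'(1 + d(x,e))^q$. One way to see this is via the ball-box theorem, which gives $|\phi^{-1}(x)| \lesssim (1 + d(x,e))^{r}$, where $r$ is the nilpotency step. Alternatively, it follows from the fact that both $1 + |\phi^{-1}(\cdot)|$ and $w_1$ are polynomial weights that define the same Schwartz space in \eqref{eq:schwartz-intersection}. Combining the two estimates yields the bound with $k = mq$, and this completes the plan.
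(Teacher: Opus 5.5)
Your argument is correct, but you control $\omega$ differently from the paper, and your version is heavier than necessary. Both proofs use the same two elementary bounds, $|\sigma_t-\sigma|\leq 2$ and $|\sigma_t-\sigma|\leq 2\pi|t-1|\,|\omega|$, and split $G\times G$ into a bounded region and its complement. The paper splits according to whether $x$ and $y$ both lie in $B_R(e)$. Off $B_R(e)\times B_R(e)$, the cap $2$ together with $w_s\geq 1$ makes the quotient smaller than $\varepsilon$ once $R$ is large. On $\overline{B_R(e)}\times\overline{B_R(e)}$ it uses only that $\omega$ is continuous and the closed ball is compact, so $\sup|\omega|<\infty$ there.

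You instead prove a global bound $|\omega(x,y)|\leq C\,w_k(x)w_k(y)$. This costs you polynomiality of multiplication in Malcev coordinates and the ball-box comparison between coordinates and the Carnot--Carath\'eodory distance. For the lemma as stated, this is not needed. Since $w_k\geq 1$, your region $\{w_k(x)w_k(y)\leq R\}$ forces $d(x,e),d(y,e)\leq R^{1/k}$, so it lies in a product of compact closed balls. Continuity of $\omega$ alone therefore gives the bound you use there, and the step you flagged as the main obstacle disappears.

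What your route buys is a rate. Optimizing $R$ in $2\pi CR|t-1|+2R^{-s/k}$ gives $O\bigl(|t-1|^{s/(s+k)}\bigr)$, which the compactness argument does not give, although nothing in the paper needs it. Your alternative justification of the coordinate comparison, via equality of Schwartz spaces, would need an extra argument to yield a pointwise comparison of weights. Rely on the ball-box route instead.
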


\begin{proof}
Fix $s > 0$. In addition to the crude estimate
\[
    |\sigma_t(x,y)-\sigma(x,y)| \leq 2,
\]
we have that
\[
    |\sigma_t(x,y)-\sigma(x,y)|
    = |e^{2\pi it\omega(x,y)}-e^{2\pi i\omega(x,y)}|
    \leq 2\pi |\omega(x,y)| |t-1|.
\]

Let $\varepsilon > 0$. We may find $R > 0$ such that
$w_s(x)>2/\varepsilon$ whenever $x\notin B_R(e) := \{ x \in G : d(x,e) < R \}$.
Since $w_s\geq1$, the two estimates above give
\begin{align*}
    \sup_{x,y\in G}
    \frac{|\sigma_t(x,y)-\sigma(x,y)|}{w_s(x)w_s(y)}
    &\leq
    \sup_{x,y\in B_R(e)}
    \frac{|\sigma_t(x,y)-\sigma(x,y)|}{w_s(x)w_s(y)}
    +
    \sup_{\substack{x,y\in G\\
        x\notin B_R(e)\ \text{or}\ y\notin B_R(e)}}
    \frac{|\sigma_t(x,y)-\sigma(x,y)|}{w_s(x)w_s(y)}
    \\
    &\leq
    2\pi |t-1|
    \sup_{x,y\in\overline{B_R(e)}}|\omega(x,y)|
    +\varepsilon.
\end{align*}
By compactness of $\overline{B_R(e)}$ and continuity of $\omega$,
the last supremum is finite. It follows that
\[
    \limsup_{t\to1}\sup_{x,y\in G}
    \frac{|\sigma_t(x,y)-\sigma(x,y)|}{w_s(x)w_s(y)}
    \leq\varepsilon.
\]
Since this holds for every $\varepsilon > 0$, the desired
conclusion follows.
\end{proof}

Lastly, we note that the projective representations $(\pi_t, \Hpi)$, $t>0$, of $G$ are irreducible and square-integrable with formal degree $d_{\pi_t} = d_{\rho_t} = t^{d} d_{\rho_1} = t^d d_{\pi}> 0$, see \zcref{lem:dim-scaled-rep}. The associated matrix coefficients on $G$ are denoted by $C^t_{f_1, f_2} : G \to \mathbb{C}$ and satisfy
 the covariance relation
\begin{align} \label{eq:covariance}
   \langle \pi_t (y) f_1, \pi_t (x) f_2 \rangle = \sigma_t (y, y^{-1} x) \langle f_1, \pi_t(y^{-1} x) f_2 \rangle, \quad x,y \in G.
\end{align}
Moreover, for a fixed vector $\phi \in \Hrho$ with $\| \phi \| = 1$, the map $V^t \colon \Hrho \to L^2(G)$ given by
\begin{equation}
    (V^t f)(x) = d_{\pi_t}^{1/2} \langle f, \pi_t(x) \phi \rangle, \quad f \in \Hrho, \; x \in G, \label{eq:coefficient-transform-isometry}
\end{equation}
is an isometry. Its adjoint is given weakly by
\begin{equation}
    (V^t)^*F = d_{\pi_t}^{1/2} \int_G F(x) \pi_t(x) \phi \; d\mu_G (x),   \label{eq:isometry-adjoint}
\end{equation}
for $F \in L^2(G)$.

\subsection{Relatively separated sets}
We denote the open ball by $B_r (x) := \{y \in G : d(x,y) < r \}$.
We recall that $G$ has polynomial growth, in the sense that there exists $c > 0$ and $d(G) \in \mathbb{N}$ such that
\begin{align} \label{eq:measure-ball-pol-growth}
\mu_G (B_r (x)) \leq c \cdot r^{d(G)}, \quad r \geq 1, \; x \in G,
\end{align}
 see, e.g., \cite{guivarch-croissance}.

A subset $\Lambda \subseteq G$ is called \emph{relatively separated} if for some (equivalently, all) $r > 0$ we have that
\[ \sup_{x \in G} \#( \Lambda \cap B_r(x) ) < \infty. \] 
From \eqref{eq:measure-ball-pol-growth}, it follows that there exists $c_{\Lambda} > 0$ such that
\begin{equation}
    \sup_{x \in G} \# ( \Lambda \cap B_r(x) ) \leq c_{\Lambda} \cdot r^{d(G)}, \quad r \geq 1, \; x \in G. \label{eq:rel-sep-pol-growth}
\end{equation}
 
Given $1 \leq p \leq \infty$, the Wiener amalgam space $W(L^p)$ consists of all measurable functions $F \colon G \to \C$ such that
\[ \| F \|_{W(L^p)}^p = \int_G \esssup_{y \in B_{1}(x)} |F(y)|^p \dif{\mu_G(x)} < \infty, \]
with the usual modification for $p = \infty$.
A continuous function $F : G \to \mathbb{C}$ is in $W(L^p)$ if and only if its restriction $F|_{\Lambda} \in \ell^p(\Lambda)$ for any relatively separated set $\Lambda \subseteq G$, cf. \cite[Lemma 3.8]{feichtinger-grochenig-banach}. In particular, we have the following estimate:
\begin{equation}
 \sup_{y \in G}   \sum_{\lambda \in \Lambda} |F(y^{-1} \lambda)| \leq C \| F \|_{W(L^1)}  \label{eq:sampling-inequality}
\end{equation}
for all continuous $F \in W(L^1)$, 
where $C > 0$ depends on $\Lambda$, cf. \cite[Lemma 2.4]{romero2021dual}.
Note that it follows from \eqref{eq:schwartz-intersection} that $\mathcal{S}(G) \subseteq W(L^p)$ for all $1 \leq p \leq \infty$.

We will repeatedly use the following observation:

\begin{lemma}\label{lem:counting_estimate}
Let $\Lambda \subseteq G$ be relatively separated. For all $s > d(G)+1$, we have that
\[ \int_G w_{- s} (x) \; d\mu_G (x) < \infty \quad \text{and} \quad \sup_{y \in G} \sum_{\lambda \in \Lambda} w_{-s}(y^{-1} \lambda) < \infty . \]
\end{lemma}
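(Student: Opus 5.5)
The integrability statement follows from a dyadic shell decomposition together with the polynomial growth bound \eqref{eq:measure-ball-pol-growth}. The summation statement is handled in the same way, using left-invariance of $d$ and the counting bound \eqref{eq:rel-sep-pol-growth} in place of the measure bound.

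For the integral, we split $G$ into the unit ball $B_1(e)$ and the shells $A_k := B_{2^{k+1}}(e)\setminus B_{2^k}(e)$ for $k \geq 0$. On $B_1(e)$ we use $w_{-s} \leq 1$, and the ball has finite measure. On $A_k$ we have $1+d(x,e) \geq 2^k$, so $w_{-s}(x) \leq 2^{-ks}$. Moreover, $\mu_G(A_k) \leq \mu_G(B_{2^{k+1}}(e)) \leq c\, 2^{(k+1)d(G)}$ by \eqref{eq:measure-ball-pol-growth}. Hence
\[
\int_G w_{-s}\, d\mu_G \leq \mu_G(B_1(e)) + c\, 2^{d(G)} \sum_{k\geq 0} 2^{-k(s-d(G))} < \infty ,
\]
since $s > d(G)$.

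For the sum, we fix $y \in G$. Left-invariance of the metric gives $d(y^{-1}\lambda, e) = d(\lambda, y)$, so
\[
\sum_{\lambda\in\Lambda} w_{-s}(y^{-1}\lambda) = \sum_{\lambda\in\Lambda} (1+d(\lambda,y))^{-s}.
\]
We decompose $\Lambda$ into $\Lambda \cap B_1(y)$ and the shells $\Lambda \cap \big(B_{2^{k+1}}(y)\setminus B_{2^k}(y)\big)$ for $k\ge0$.

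The first part has at most $c_\Lambda$ points, each contributing at most $1$. The $k$-th shell has at most $c_\Lambda 2^{(k+1)d(G)}$ points by \eqref{eq:rel-sep-pol-growth}, and each point contributes at most $2^{-ks}$. Summing the geometric series bounds the total by $c_\Lambda\big(1 + 2^{d(G)}\sum_k 2^{-k(s-d(G))}\big)$. This bound is independent of $y$, so taking the supremum over $y$ proves the claim.

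There is no real obstacle in this argument. The only points needing care are that the growth bounds are stated only for radii $r\ge1$, which is handled by treating the unit ball separately, and that the estimate must be uniform in $y$, which follows from left-invariance of $d$ and the uniformity in $x$ in \eqref{eq:rel-sep-pol-growth}. The argument in fact only needs $s>d(G)$; the hypothesis $s>d(G)+1$ is more than enough.
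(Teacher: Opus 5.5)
Your proof is correct, but it takes a different route from the paper. The paper does no shell counting of its own. For the integral it cites \cite[Proposition 1.5.1]{schweitzer1993dense}. For the sum, it uses submultiplicativity of $w_s$ to get $\sup_{z\in B_1(x)} w_{-s}(z)\leq 2^s w_{-s}(x)$, which together with integrability gives $w_{-s}\in W(L^1)$. It then applies the sampling inequality \eqref{eq:sampling-inequality} for continuous functions in $W(L^1)$.

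You instead prove both statements directly by a dyadic decomposition. You use only the growth bounds \eqref{eq:measure-ball-pol-growth} and \eqref{eq:rel-sep-pol-growth} already recorded in the paper, and left-invariance $d(y^{-1}\lambda,e)=d(\lambda,y)$ gives you uniformity in $y$. Your treatment of the unit ball is also fine: the bounds at $r=1$ give at most $c$, respectively $c_\Lambda$.

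The paper's version is shorter and reuses the Wiener amalgam machinery that also yields the Bessel property of the smooth coherent systems. Yours is self-contained, needs neither external reference, and shows that with \eqref{eq:measure-ball-pol-growth} as stated the weaker condition $s>d(G)$ already suffices for this lemma.
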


\begin{proof}
For the integrability claim, see, e.g., \cite[Proposition 1.5.1]{schweitzer1993dense}. 
The submultiplicativity of $w_{s}$ and integrability shows that $w_{-s} \in W(L^1)$ whenever $s > d(G) + 1$. Hence the result follows from the sampling inequality \eqref{eq:sampling-inequality}.
\end{proof}


\subsection{Frames and Riesz sequences}
For a smooth vector $\eta \in \Hpi^{\infty}$ and a relatively separated set $\Lambda \subseteq G$,
we consider the associated \emph{coherent system}
given by
\[ \pi_t (\Lambda) \eta = ( \pi_t (\lambda)\eta)_{\lambda \in \Lambda}. \]
Since $C^{\pi_t}_{\eta, \eta} \in \mathcal{S}(G)$ by \zcref{thm:schwartz_matrix} and $\mathcal{S}(G) \subseteq W(L^1)$, 
it follows from the sampling inequality \eqref{eq:sampling-inequality} that the smooth coherent system $\pi_t (\Lambda) \eta$ is a \emph{Bessel sequence}, i.e., the associated analysis operator $\anal^t : \Hpi \to \ell^2 (\Lambda)$ and synthesis operator $\synt^t : \ell^2 (\Lambda) \to \Hpi$ given by
\[
\anal^t f = \big( \langle f, \pi_t (\lambda) \eta \rangle \big)_{\lambda \in \Lambda} \quad \text{and} \quad \synt^t c = \sum_{\lambda \in \Lambda} c_\lambda \pi_t(\lambda) \eta,
\]
respectively, are well-defined and bounded. Note that $\synt^t = (\anal^t)^{\ast}$. The positive operators $\fram^t := \synt^t \anal^t$ and $\gram^t := \anal^t \synt^t$ are called the \emph{frame operator} and \emph{Gramian} associated to the coherent system $\pi_t(\Lambda) \eta$, respectively. We simply write $\anal, \synt, \fram, \gram$ when $t=1$.

The coherent system $\pi_t(\Lambda)\eta$ is said to be a \emph{frame} whenever $\synt^t$ has a bounded right inverse; equivalently, if its frame operator $\fram^t$ is invertible. 
We call $\pi_t(\Lambda)\eta$ a \emph{frame sequence} if $\fram^t$ is invertible on the span $M := \overline{\mathrm{span}} \{ \pi_t(\lambda) \eta : \lambda \in \Lambda \}$. Lastly, $\pi_t(\Lambda)\eta$ is called a \emph{Riesz sequence} if  $\synt^t$ has a bounded left inverse; equivalently, if $\gram^t$ is invertible on $\ell^2 (\Lambda)$. A Riesz sequence is in particular a frame sequence.

When $\pi(\Lambda)\eta$ is a frame sequence, we will frequently consider its \emph{canonical dual frame sequence} $(\tilde{\eta}_\lambda)_{\lambda \in \Lambda}$ given by $\tilde{\eta}_\lambda = (\fram|_M)^{-1} \pi(\lambda) \eta$ for $\lambda \in \Lambda$. Let $\gram^\dagger$ denote the Moore--Penrose inverse of $\gram$. Then the identity $(\fram|_M)^{-1} \synt = \synt \gram^\dagger$ gives the following description of the canonical dual frame sequence:
\begin{equation}
    \tilde{\eta}_\lambda = \sum_{\lambda' \in \Lambda} \gram_{\lambda',\lambda}^\dagger \pi(\lambda')\eta, \quad \lambda \in \Lambda. \label{eq:dual-frame-pseudo}
\end{equation}
When $\pi(\Lambda)\eta$ is a frame for $\mathcal{H}$, then $(\tilde{\eta}_\lambda)_{\lambda \in \Lambda}$ is called the \emph{canonical dual frame} of $\pi(\Lambda)\eta$, and it satisfies
\begin{equation}
    f = \sum_{\lambda \in \Lambda} \langle f, \pi(\lambda) \eta \rangle \tilde{\eta}_{\lambda} = \sum_{\lambda \in \Lambda} \langle f, \tilde{\eta}_\lambda \rangle \pi(\lambda) \eta, \quad f \in \mathcal{H}. \label{eq:dual-frame}
\end{equation}
 If $\pi(\Lambda)\eta$ is a Riesz sequence, its canonical dual frame sequence $(\tilde{\eta}_\lambda)_{\lambda \in \Lambda}$ is \emph{biorthogonal} to $\pi(\Lambda)\eta$ in the sense that
\begin{equation}
    \langle \pi(\lambda)\eta, \tilde{\eta}_{\lambda'} \rangle = \delta_{\lambda,\lambda'} , \quad \lambda, \lambda' \in \Lambda. \label{eq:biorthognal}
\end{equation}
We refer to the book \cite{christensen2016introduction} for standard background on frames and Riesz sequences. 

We will use the following lemma on bounded sequences in $\Hpi^{\infty}$ generating Bessel sequences.

\begin{lemma}\label{lem:bessel-sequence-t}
Let $(\phi_\lambda)_{\lambda \in \Lambda}$ be a bounded sequence in $\mathcal{H}^\infty$. Then for any $t > 0$ the sequence $( \pi_t(\lambda) \phi_\lambda )_{\lambda \in \Lambda}$ is a Bessel sequence.
\end{lemma}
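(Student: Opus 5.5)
The plan is to apply Schur's test to the Gramian of the family $(\pi_t(\lambda)\phi_\lambda)_{\lambda\in\Lambda}$. By the covariance relation \eqref{eq:covariance}, the Gramian entries satisfy
\[
|\langle \pi_t(\lambda')\phi_{\lambda'}, \pi_t(\lambda)\phi_\lambda\rangle| = |\langle \phi_{\lambda'}, \pi_t(\lambda'^{-1}\lambda)\phi_\lambda\rangle| = |C^t_{\phi_{\lambda'},\phi_\lambda}(\lambda'^{-1}\lambda)|.
\]
Here $C^t$ is computed with the cross-section $s$. Unimodular factors coming from the cocycle, or from a change of cross-section, do not affect absolute values, so the exact choice of representatives is irrelevant.

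To bound these entries uniformly, I use that the set $\mathcal{B} := \{\phi_\lambda : \lambda \in \Lambda\}$ is bounded in $\Hpi^\infty$. By \zcref{thm:schwartz_matrix}, the map $C^{\rho_t}\colon \Hpi^\infty\times\Hpi^\infty \to \mathcal{S}(G)$ is continuous and bilinear (sesquilinear). On Fréchet spaces such a separately continuous sesquilinear map is jointly continuous, and hence bounded on bounded sets. Therefore $\{C^t_{f_1,f_2} : f_1,f_2\in\mathcal{B}\}$ is bounded in $\mathcal{S}(G)$. By the seminorm description \eqref{eq:schwartz-intersection}, taking $D$ to be the identity and $s > d(G)+1$, there is a constant $K$ with
\[
|C^t_{f_1,f_2}(x)| \le K\, w_{-s}(x), \qquad x \in G,\ f_1,f_2\in\mathcal{B}.
\]
Only continuity for each fixed $t$ is needed here. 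Alternatively, \zcref{thm:continuity-in-t} gives the bound uniformly in $t$ on compact sets, but that is not required.

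Consequently, $|\gram^t_{\lambda,\lambda'}| \le K\, w_{-s}(\lambda'^{-1}\lambda)$. By \zcref{lem:counting_estimate},
\[
\sup_{\lambda'}\sum_{\lambda} w_{-s}(\lambda'^{-1}\lambda)<\infty.
\]
The sum over $\lambda'$ for fixed $\lambda$ is handled by symmetry, using $d(x^{-1},e)=d(x,e)$, so that $w_{-s}(\lambda'^{-1}\lambda)=w_{-s}(\lambda^{-1}\lambda')$. Schur's test then shows that the Gramian is bounded on $\ell^2(\Lambda)$. Boundedness of the synthesis operator on finitely supported sequences follows, which is exactly the Bessel property.

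The only slightly delicate step is the uniform Schwartz bound over the bounded family $\mathcal{B}$. It follows from joint continuity of $C^{\rho_t}$ as stated in \zcref{thm:schwartz_matrix}, which maps products of bounded sets to bounded sets.
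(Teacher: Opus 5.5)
Your proof is correct and follows the same route as the paper. Both arguments obtain a uniform bound $|C^t_{f_1,f_2}(x)| \le K\, w_{-s}(x)$ over the bounded family from \zcref{thm:schwartz_matrix}, use the covariance relation to bound the Gramian entries by $K\, w_{-s}(\lambda^{-1}\lambda')$, and conclude with \zcref{lem:counting_estimate} and Schur's test. Your remarks on why the coefficient map sends bounded sets to bounded sets, and on the symmetry $w_{-s}(x^{-1}) = w_{-s}(x)$ needed for the second Schur sum, make explicit two details the paper leaves implicit.
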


\begin{proof}
Fix $t>0$. Since $(\phi_\lambda)_{\lambda \in \Lambda}$ is assumed bounded in $\mathcal{H}^\infty$, \zcref{thm:schwartz_matrix} implies that for every $s>0$,
there exists $K_{s}>0$ such that
\[
    |C^t_{\phi_{\lambda},\phi_{\lambda'}}(x)|
    \leq K_{s}w_{-s}(x),
    \qquad x\in G,\quad \lambda,\lambda' \in\Lambda.
\]
Consider the matrix
$
    A^t_{\lambda',\lambda}
    :=\langle \pi_t(\lambda)\phi_{\lambda},
              \pi_t(\lambda')\phi_{\lambda'}\rangle
$ for $\lambda, \lambda' \in \Lambda$.
By the covariance relation \eqref{eq:covariance}, we have
\[
    |A^t_{\lambda',\lambda}|
    =|C^t_{\phi_{\lambda},\phi_{\lambda'}}(\lambda^{-1}\lambda')|
    \leq K_{s}w_{-s}(\lambda^{-1}\lambda').
\]
Upon choosing $s>d(G)+1$, it follows from \zcref{lem:counting_estimate} that
\[
    \sup_{\lambda\in\Lambda}
    \sum_{\lambda'\in\Lambda}|A^t_{\lambda',\lambda}|<\infty, \quad \sup_{\lambda' \in\Lambda}
    \sum_{\lambda\in\Lambda}|A^t_{\lambda',\lambda}|<\infty.
\]
Therefore, 
Schur's test shows that $A^t$ defines a bounded
operator on $\ell^2(\Lambda)$, which implies that
that $(\pi_t(\lambda)\phi_{\lambda})_{\lambda \in\Lambda}$
is a Bessel sequence.
\end{proof}

In particular, we will apply \zcref{lem:bessel-sequence-t} to the following bounded sequences.

\begin{lemma}\label{lem:boundedness-frame-sequence}
Let $\eta \in \Hrho^\infty$ and suppose that $\pi(\Lambda) \eta$ is a frame sequence with canonical dual frame sequence $(\tilde{\eta}_\lambda)_{\lambda \in \Lambda}$. Then the system
$ \big( \pi(\lambda)^* \tilde{\eta}_{\lambda} \big)_{\lambda \in \Lambda}$
is bounded in $\Hpi^{\infty}$.
\end{lemma}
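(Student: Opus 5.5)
The plan is to write $\pi(\lambda)^*\tilde{\eta}_\lambda$ as a $\pi$-superposition of translates of $\eta$ with a uniformly controlled coefficient sequence. Then boundedness in $\Hpi^\infty$ follows from the continuity of the $N$-action on $\Hpi^\infty$ together with polynomial growth of the seminorms.

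By \eqref{eq:dual-frame-pseudo} and the cocycle relation, $\pi(\lambda)^*\pi(\lambda') = \pi(\lambda^{-1})\pi(\lambda')$ up to a unimodular scalar. Hence
\[
\pi(\lambda)^*\tilde{\eta}_\lambda = \sum_{\lambda'\in\Lambda} c_{\lambda,\lambda'}\, \gram^\dagger_{\lambda',\lambda}\, \pi(\lambda^{-1}\lambda')\eta
\]
with $|c_{\lambda,\lambda'}|=1$. The key input is off-diagonal decay of the Moore--Penrose inverse. By \zcref{lem:bessel-sequence-t}'s argument and \zcref{thm:schwartz_matrix}, $|\gram_{\lambda',\lambda}| \le K_s w_{-s}(\lambda^{-1}\lambda')$ for every $s$. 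So $\gram$ lies in the algebra of matrices with polynomial off-diagonal decay of every order, with respect to the metric on $G$ restricted to $\Lambda$. This algebra is known to be inverse-closed (spectrally invariant) in $B(\ell^2(\Lambda))$ for relatively separated $\Lambda$ in spaces of polynomial growth (Jaffard/Gröchenig--Leinert type results). Since $\gram$ has closed range, $0$ is isolated in its spectrum. The Moore--Penrose inverse $\gram^\dagger = \frac{1}{2\pi i}\oint_\Gamma z^{-1}(z-\gram)^{-1}\,dz$, with $\Gamma$ encircling the positive part of the spectrum, therefore lies in the same algebra. Thus $|\gram^\dagger_{\lambda',\lambda}| \le K'_s w_{-s}(\lambda^{-1}\lambda')$ for all $s$. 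I expect this step to be the main obstacle. One has to invoke, or reprove via a Banach algebra / Brandenburg trick argument, spectral invariance of the decay algebra on a relatively separated subset of $G$ with the Carnot--Carathéodory metric, whose balls grow polynomially by \eqref{eq:measure-ball-pol-growth}. Then one must pass to the Moore--Penrose inverse via holomorphic functional calculus.

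Next I need polynomial control of seminorms of translates. Choose a basis $X_1,\dots,X_n$ of $\mathfrak{n}$. Then $d\rho(X^\alpha)\rho(x)\eta = \rho(x)\, d\rho(\mathrm{Ad}(x^{-1})X^\alpha)\eta$. Since $N$ is nilpotent, $\mathrm{Ad}(s(y)^{-1})$ has matrix entries polynomial in the Malcev coordinates of $y$, and these coordinates are bounded polynomially by $1+d(y,e)$. Hence for each $\alpha$ there are $C_\alpha, m_\alpha$ with
\[
p_\alpha(\pi(y)\eta) \le C_\alpha\, w_{m_\alpha}(y) \max_{|\beta|\le|\alpha|} p_\beta(\eta).
\]
The unimodular scalars do not affect the seminorms.

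Combining the two estimates gives
\[
p_\alpha\bigl(\pi(\lambda)^*\tilde{\eta}_\lambda\bigr) \le C \sum_{\lambda'\in\Lambda} w_{-s}(\lambda^{-1}\lambda')\, w_{m_\alpha}(\lambda^{-1}\lambda').
\]
Choosing $s > m_\alpha + d(G)+1$, this is bounded uniformly in $\lambda$ by \zcref{lem:counting_estimate}, applied with $y=\lambda$. It remains to justify convergence of the series in $\Hpi^\infty$, which the same absolute estimate gives. Since $d\rho(X^\alpha)$ is closed, the seminorm bound passes to the sum, yielding boundedness of $(\pi(\lambda)^*\tilde{\eta}_\lambda)_{\lambda\in\Lambda}$ in $\Hpi^\infty$.
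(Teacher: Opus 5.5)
Your proposal is correct and follows the paper's proof essentially step by step. You show $\gram$ lies in the Jaffard algebra, deduce off-diagonal decay of $\gram^\dagger$, bound the seminorms of $\pi(x)\eta$ polynomially through the adjoint action, and conclude with \zcref{lem:counting_estimate}. The only variation is how you get decay of the Moore--Penrose inverse: you derive it from inverse-closedness by holomorphic functional calculus around the isolated point $0$, which works in each unital Banach algebra $\mathcal{A}_{\infty,v_s}$ for $s>d(G)+1$, whereas the paper cites Sun's Theorem 5.1, which gives the Moore--Penrose statement directly (\zcref{thm:spectral-invariance}).
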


\begin{proof}
The proof will use the following matrix algebra:
\[ \mathcal{J}(\Lambda) = \Big\{ A \in \mathcal{B}(\ell^2(\Lambda)) \; : \; \forall s > 0 \; \sup_{\lambda,\lambda' \in \Lambda}|A_{\lambda,\lambda'}| w_s(\lambda^{-1}\lambda') < \infty \Big\} . \]
This is a special case of the so-called \emph{Jaffard matrix algebra} \cite{jaffard-1990} studied in \cite{sun2007}; see \zcref{sec:spectral-invariance} for details.

Let $\gram \in \mathcal{B}(\ell^2(\Lambda))$ denote the Gramian of $\pi(\Lambda) \eta$. Denote by $\gram^{\dagger}$ its Moore-Penrose inverse. Then by \eqref{eq:dual-frame-pseudo} the canonical dual frame sequence of $\pi(\Lambda)\eta$ can be expressed as
\[ \tilde{\eta}_\lambda = \sum_{\lambda' \in \Lambda} \gram_{\lambda',\lambda}^\dagger \pi(\lambda')\eta , \quad \lambda \in \Lambda . \]
We will use this expression and an estimate for the action of $\pi$ on the seminorms defining $\Hpi^{\infty}$ to prove the claim.

We start by deriving pointwise estimates on the entries defining $\gram^{\dagger}$. For this, note that, using the covariance relation \eqref{eq:covariance}, the entries of $\gram$ are given by
\[ \gram_{\lambda',\lambda} = \langle \pi(\lambda) \eta ,\pi(\lambda')\eta \rangle = \sigma(\lambda, \lambda^{-1} \lambda') C_{\eta,\eta}(\lambda^{-1}\lambda'), \quad \lambda,\lambda' \in \Lambda. \]
By \zcref{thm:schwartz_matrix}, the matrix coefficient $C_{\eta,\eta}$ is a Schwartz function on $G$, so by the description of $\mathcal{S}(G)$ in \eqref{eq:schwartz-intersection}, it follows easily that $\gram \in \mathcal{J}(\Lambda)$. By \zcref{thm:spectral-invariance}, it follows that its Moore--Penrose inverse $\gram^\dagger$ also belongs to $\mathcal{J}(\Lambda)$. Hence, letting $s > 0$ be arbitrary, there exists $C' > 0$ such that
\begin{equation}
    |\gram_{\lambda',\lambda}^\dagger| \leq C'  w_{-s}(\lambda^{-1}\lambda'), \quad \lambda,\lambda' \in \Lambda. \label{eq:gramian_bound}
\end{equation}

We will next show an estimate for the action of $\pi$ on $\Hpi^{\infty}$. For this, recall that $\pi(x) = \rho(s(x))$ for a unitary representation $(\rho, \Hpi)$ of $N$. Fix $\alpha \in \N_0^n$. A direct calculation gives, for $n \in N$,
\[
    d\rho(X^\alpha)\rho(n)\eta
    =
    \rho(n)\sum_{|\beta|\leq|\alpha|}
    c_{\alpha,\beta}(n)d\rho(X^\beta)\eta
\]
for functions $c_{\alpha, \beta} : N \to \C$ arising from the adjoint action, see, e,g., the proof of \cite[Lemma A.1.1]{corwin1990representations}.
Since $N$ is nilpotent, the functions $c_{\alpha, \beta}$ are polynomials on $N$ that may be chosen to be constant on cosets $nZ(N)$, so that
$c_{\alpha,\beta}\circ s$ are polynomial functions on $G=N/Z(N)$.
By \cite[Section 1.5]{ludwig1988minimal} or \cite[Section 3.5]{ludwig1995schwartz}, these polynomials can be estimated by weights, in the sense that there exist
$C_\alpha,r_\alpha>0$ such that
$
    |c_{\alpha,\beta}(s(x))|
    \leq C_\alpha w_{r_\alpha}(x)
$
for $ x\in G$ and $|\beta|\leq|\alpha|$.
Therefore, we obtain that 
\begin{align} \label{eq:seminorm-continuity}
    p_\alpha(\pi(x)\eta)
    \leq C_\alpha w_{r_\alpha}(x)
    \sum_{|\beta|\leq|\alpha|}p_\beta(\eta)
    .
\end{align}
For simplicity, we set $p'(\eta) := \sum_{|\beta| \leq |\alpha|} p_{\beta} (\eta)$.

Lastly, using that
\[ \pi(\lambda)^* \tilde{\eta}_\lambda = \sum_{\lambda' \in \Lambda} \gram_{\lambda',\lambda}^\dagger \overline{\sigma(\lambda, \lambda^{-1} \lambda')} \pi(\lambda^{-1}\lambda') \eta , \]
in combination with \eqref{eq:seminorm-continuity}, we obtain
\[ p_{\alpha} (\pi(\lambda)^* \tilde{\eta}_\lambda) \leq \sum_{\lambda' \in \Lambda} |\gram_{\lambda,\lambda'}^\dagger| C_{\alpha} w_r(\lambda^{-1}\lambda') p'(\eta) \leq C_{\alpha} C' p'(\eta) \sum_{\lambda' \in \Lambda} w_{r-s}(\lambda^{-1}\lambda')  .  \]
Choosing $s > 0$ such that $s-r > d(G)+1$, we thus get using \zcref{lem:counting_estimate} that
\[
\sup_{\lambda \in \Lambda} p_{\alpha} (\pi(\lambda)^* \tilde{\eta}_{\lambda} ) < \infty, \quad \alpha \in \N_0^n.
\]
This proves that $(\pi(\lambda)^*\tilde\eta_\lambda)_{\lambda \in \Lambda}$ is bounded in $\Hpi^{\infty}$.
\end{proof}

\subsection{Deformation of frames and Riesz sequences}

The following theorem is the main technical deformation result on coherent systems that we need in order to establish \zcref{thm:main_intro}. It builds fundamentally on \zcref{thm:continuity-in-t} as well as \zcref{lem:boundedness-frame-sequence}.

\begin{theorem}\label{thm:common-deformation}
Suppose that $\pi(\Lambda) \eta$ is a frame sequence for $\Hpi$ with $\eta \in \Hpi^{\infty}$ and denote by $( \tilde{\eta}_{\lambda} )_{\lambda \in \Lambda}  := \fram^{-1} \pi(\Lambda) \eta $ its canonical dual frame sequence. 
Let $\synt^t$ denote the synthesis operator of  $\pi_t(\Lambda)\eta$ and let $\tilde{\anal}^t$ denote the analysis operator of 
\begin{equation}
    ( \pi_t(\lambda) \pi(\lambda)^* \tilde{\eta}_\lambda )_{\lambda \in \Lambda}. \label{eq:coherent-system-t}
\end{equation}
The following assertions hold:
\begin{enumerate}
    \item[(i)] If $\pi(\Lambda)\eta$ is a frame, then $\tilde{\fram}^t \coloneqq \synt^t \tilde{\anal}^t \to I$ in operator norm as $t \to 1$.
    \item[(ii)] If $\pi(\Lambda)\eta$ is a Riesz sequence, then $\tilde{\gram}^t \coloneqq \tilde{\anal}^t \synt^t \to I$ in operator norm as $t \to 1$.
\end{enumerate}
\end{theorem}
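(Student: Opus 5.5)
At $t=1$ both operators reduce to the identity. For (i) we have $\tilde{\fram}^1 = \synt \tilde{\anal} = \sum_\lambda \langle \cdot, \tilde\eta_\lambda\rangle \pi(\lambda)\eta = I$ by \eqref{eq:dual-frame}. For (ii) we have $\tilde{\gram}^1_{\lambda',\lambda} = \langle \pi(\lambda)\eta, \tilde\eta_{\lambda'}\rangle = \delta_{\lambda,\lambda'}$ by \eqref{eq:biorthognal}. So it suffices to show $\tilde{\fram}^t - \tilde{\fram}^1 \to 0$ and $\tilde{\gram}^t - \tilde{\gram}^1 \to 0$ in operator norm. Write $\phi_\lambda := \pi(\lambda)^*\tilde\eta_\lambda$. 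By \zcref{lem:boundedness-frame-sequence}, $(\phi_\lambda)_{\lambda\in\Lambda}$ lies in a bounded set $\mathcal{B} \subseteq \Hpi^\infty$, and by \zcref{lem:bessel-sequence-t} all the operators involved are bounded.

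\textbf{Case (ii).} The Gramian entries are
\[
\tilde{\gram}^t_{\lambda',\lambda} = \langle \pi_t(\lambda)\eta, \pi_t(\lambda')\phi_{\lambda'}\rangle = \sigma_t(\lambda,\lambda^{-1}\lambda')\, C^t_{\eta,\phi_{\lambda'}}(\lambda^{-1}\lambda'),
\]
using \eqref{eq:covariance}. Split the difference as
\[
\tilde{\gram}^t_{\lambda',\lambda}-\tilde{\gram}^1_{\lambda',\lambda} = (\sigma_t-\sigma)(\lambda,\lambda^{-1}\lambda')\,C^t_{\eta,\phi_{\lambda'}}(\lambda^{-1}\lambda') + \sigma(\lambda,\lambda^{-1}\lambda')\bigl(C^t_{\eta,\phi_{\lambda'}}-C^1_{\eta,\phi_{\lambda'}}\bigr)(\lambda^{-1}\lambda').
\]

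For the second term, \zcref{thm:continuity-in-t} applied to the bounded sets $\{\eta\}$ and $\mathcal{B}$ gives
\[
\sup_{\phi\in\mathcal B}\|(C^t_{\eta,\phi}-C^1_{\eta,\phi})w_s\|_\infty\to 0
\]
for every $s$, using \eqref{eq:schwartz-intersection}. Choosing $s>d(G)+1$, Schur's test with \zcref{lem:counting_estimate} makes this term small in operator norm. Here $C^t$ on $G$ is $C^{\rho_t}$ composed with the cross-section, and the fixed intertwiner identifying $\pi$ with $\pi_1$ is harmless.

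The first term is the delicate one, because \zcref{lem:cocycle-convergence} only controls $|\sigma_t-\sigma|(x,y)$ relative to $w_s(x)w_s(y)$, and here $x=\lambda$ is unrestricted. The plan is to first reduce to cocycle differences evaluated near a small argument. Take representatives, or conjugate by a fixed unitary diagonal matrix (which does not change operator norms), so that each entry depends on $\lambda^{-1}\lambda'$ only through a normalized cocycle. Concretely, $\overline{\sigma_t(\lambda,\lambda^{-1})}\sigma_t(\lambda,\lambda^{-1}\lambda')$ can be rewritten via the cocycle identity in terms of $\sigma_t(\lambda^{-1},\lambda')$-type factors. If that rewriting does not give a small-argument form, the fallback is to use the explicit form $\sigma_t=e^{2\pi i t\omega}$ together with a polynomial bound $|\omega(x,y)-\omega(x,e)|\lesssim w_r(x)\cdots$. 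I would try the alternative of rephrasing (ii) via $\tilde\gram^t$ acting as $\anal^t$-type operators with $\pi_t(\lambda)$ and absorbing phases $\sigma_t(\lambda,\lambda^{-1})$ into a unitary diagonal, so the residual factor is $\sigma_t(\lambda^{-1},\lambda')\overline{\sigma(\lambda^{-1},\lambda')}$. Then bound $|\sigma_t-\sigma|\le 2\pi|t-1||\omega|$, with $\omega$ polynomial, so $|\omega(x,y)|\lesssim w_r(x)w_r(y)$. The decay of $C^t$ in $\lambda^{-1}\lambda'$ then has to absorb the remaining growth. I expect this cocycle bookkeeping to be the main obstacle.

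\textbf{Case (i).} Use the identity $\tilde{\fram}^t=\synt^t\tilde{\anal}^t$ and pass to the isometry $V^t$ of \eqref{eq:coefficient-transform-isometry}. Write
\[
\|\tilde{\fram}^t-I\| = \|V\tilde{\fram}^tV^*-VV^*\|,
\]
which is an integral operator on $G$ with kernel
\[
\sum_\lambda \langle \pi(y)\phi,\pi_t(\lambda)\phi_\lambda\rangle\langle\pi_t(\lambda)\eta,\pi(x)\phi\rangle.
\]
Compare this to the same expression at $t=1$, moving the $t$-dependence into $\pi_t(\lambda)^*\pi(x)\phi$, i.e. via $V^t$ and $(V^t)^*$ instead. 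Write $\tilde\fram^t-\tilde\fram^1$ as $(V^t)^*K^tV^t$ plus differences $(V^t)^*V^t - V^*V$-type terms. The kernel $K^t(x,y)=\sum_\lambda C^t_{\phi_\lambda,\phi}(\lambda^{-1}y)\,\overline{C^t_{\eta,\phi}(\lambda^{-1}x)}$, times cocycles, converges by \zcref{thm:continuity-in-t} and the same cocycle handling, with Schur's test on $G$ (integral version of \zcref{lem:counting_estimate}) closing the estimate.
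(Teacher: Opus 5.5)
Your overall framework is sound: both operators equal $I$ at $t=1$, \zcref{lem:boundedness-frame-sequence} and \zcref{lem:bessel-sequence-t} apply, and the $C^t-C^1$ term is handled by \zcref{thm:continuity-in-t} and Schur's test. There is, however, a genuine gap exactly where you flagged it, and the fallback you sketch would not close it.

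\textbf{Case (ii).} Consider the term $(\sigma_t-\sigma)(\lambda,\lambda^{-1}\lambda')\,C^t_{\eta,\phi_{\lambda'}}(\lambda^{-1}\lambda')$. A polynomial bound on $\omega$ cannot control it. The growth of $\omega(\lambda,\lambda^{-1}\lambda')$ is in $\lambda$ itself, not in $\lambda^{-1}\lambda'$, so the decay of $C^t$ cannot absorb it. The same holds for your residual factor $\sigma_t(\lambda^{-1},\lambda')\overline{\sigma(\lambda^{-1},\lambda')}$: cocycle rewriting and diagonal conjugation only move the unbounded argument around. Already for the Heisenberg group with a lattice, $\omega(\lambda,\mu)=c\,\lambda_1\mu_2$, and for fixed $\mu$ the quantity $\sup_{\lambda}|\sigma_t(\lambda,\mu)-\sigma(\lambda,\mu)|$ stays close to $2$ for every $t\neq 1$.

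The missing observation is that the $t=1$ Gramian is \emph{exactly} $I$. Biorthogonality forces $C^1_{\eta,\phi_{\lambda'}}(\lambda^{-1}\lambda')=0$ for $\lambda\neq\lambda'$. The diagonal entries satisfy $\langle\pi_t(\lambda)\eta,\pi_t(\lambda)\phi_\lambda\rangle=\langle\pi(\lambda)\eta,\tilde\eta_\lambda\rangle=1$ for all $t$. Hence $|(\tilde{\gram}^t-I)_{\lambda',\lambda}|=|C^t_{\eta,\phi_{\lambda'}}-C^1_{\eta,\phi_{\lambda'}}|(\lambda^{-1}\lambda')$. The unimodular factor $\sigma_t$ drops out in absolute value, and no cocycle comparison is needed. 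Equivalently, your ``delicate'' term equals $(\sigma_t-\sigma)(C^t-C^1)$ entrywise, since $\sigma_t(\lambda,e)=\sigma(\lambda,e)=1$, so it is at most $2\varepsilon_s(t)w_{-s}(\lambda^{-1}\lambda')$.

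\textbf{Case (i).} The same obstruction appears, and your sketch has an additional structural problem. Conjugating by the fixed $V$ produces mixed coefficients $\langle\pi(y)\phi,\pi_t(\lambda)\phi_\lambda\rangle$. Neither \eqref{eq:covariance} nor \zcref{thm:continuity-in-t} applies to these, and their size is not governed by $y^{-1}\lambda$. The ``$(V^t)^*V^t-V^*V$-type'' corrections vanish identically, so they do not repair this.

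The workable route is to conjugate $\tilde{\fram}^t-I$ by $V^t$ with window $\eta$, so that every coefficient in the kernel is a $\pi_t$-coefficient. Then apply the cocycle identity $\sigma_t(y,y^{-1}\lambda)\sigma_t(\lambda,\lambda^{-1}x)=\sigma_t(y,y^{-1}x)\sigma_t(y^{-1}\lambda,\lambda^{-1}x)$. This factors the bad phase $\sigma_t(y,y^{-1}x)$ out of the whole kernel, including the contribution of $-I$.

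Because $\tilde{\fram}^1=I$, the normalized kernel $\overline{\sigma_t(y,y^{-1}x)}K_t(x,y)/d_{\pi_t}$ vanishes identically at $t=1$, so you may subtract its $t=1$ value for free. After telescoping, the only cocycle differences left are $\sigma_t(y^{-1}\lambda,\lambda^{-1}x)-\sigma(y^{-1}\lambda,\lambda^{-1}x)$, with both arguments relative. By \zcref{lem:cocycle-convergence} these are bounded by $\varepsilon(t)w_{s'}(y^{-1}\lambda)w_{s'}(\lambda^{-1}x)$. The Schwartz decay of $C^t_{\eta,\phi_\lambda}(y^{-1}\lambda)C^t_{\eta,\eta}(\lambda^{-1}x)$ absorbs this bound, and Schur's test together with \zcref{lem:counting_estimate} then finishes the argument.
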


\begin{proof}
Abbreviate $\phi_\lambda := \pi(\lambda)^* \tilde{\eta}_\lambda$ for $\lambda \in \Lambda$. We may without loss of generality assume that $\| \eta \| = 1$. An application of \zcref{lem:boundedness-frame-sequence} yields that the set $\mathcal{B} := \{ \eta \} \cup \{ \phi_{\lambda} : \lambda \in \Lambda \}$ is bounded in $\Hpi^{\infty}$. Then \zcref{lem:bessel-sequence-t} shows that the system in \eqref{eq:coherent-system-t} is Bessel, so all operators considered are bounded.

We may also apply \zcref{thm:continuity-in-t} and conclude that $C_{f_1,f_2}^t \to C_{f_1,f_2}^1$ in $\mathcal{S}(G)$ as $t \to 1$ uniformly for all $f_1,f_2 \in \mathcal{B}$. Let $I \subseteq (0, \infty)$ be a compact interval containing $1$ in its interior. For every $s > 0$, this gives a function $\varepsilon_s(t) \to 0$ and a constant $C_s > 0$ such that, for all $f_1,f_2 \in \mathcal{B}$ and all $t \in I$,
\begin{align}
    | C_{f_1,f_2}^t(x) - C_{f_1,f_2}^1(x) | &\leq \varepsilon_s(t) w_{-s}(x) , \label{eq:schwartz-inequality} \\
    | C_{f_1,f_2}^t (x) | &\leq C_s \cdot w_{-s}(x) . \label{eq:schwartz-bounded}
\end{align}
By \zcref{lem:counting_estimate}, we have that
\begin{align}
    I_s &:= \int_G w_{-s}(x) \; d\mu_G (x) < \infty, \label{eq:integral-finite} \\
    J_s &:= \sup_{y \in G} \sum_{\lambda \in \Lambda} w_{-s}(y^{-1}\lambda) < \infty\label{eq:sum-finite}
\end{align}
whenever $s > d(G) + 1$. We  prove assertions (i) and (ii) separately.
\\~\\
(i) Suppose $\pi(\Lambda)\eta$ is a frame for $\Hrho$. Consider the isometry $V^t \colon \Hrho \to L^2(G)$ from \eqref{eq:coefficient-transform-isometry} given by $(V^t f)(x) = d_{\pi_t}^{1/2} \langle f, \pi_t(x) \eta \rangle$ for $f \in \Hrho$ and $x \in G$. Conjugating $\tilde{\fram}^t-I$ by $V^t$, we obtain
\begin{align*}
    \big(  V^t (\tilde{\fram}^t-I) (V^t)^* \big) F \big)(x) &= d_{\pi_t}^{1/2} \big\langle (\tilde{\fram}^t - I) (V^t)^* F, \pi_t(x) \eta \big\rangle \\
    &= d_{\pi_t} \int_G F(y) \big\langle  (\tilde{\fram}^t - I) \pi_t(y) \eta, \pi_t(x) \eta \big\rangle \; d\mu_G (y) .
\end{align*}
This is an integral operator on $L^2(G)$ with kernel equal to
\begin{align*}
    K_t(x,y) &= d_{\pi_t} \langle (\tilde{\fram}^t - I) \pi_t(y) \eta, \pi_t(x) \eta \rangle \\
    &= d_{\pi_t} \sum_{\lambda \in \Lambda} \langle \pi_t(y) \eta, \pi_t(\lambda) \phi_{\lambda} \rangle \langle \pi_t(\lambda)\eta, \pi_t(x) \eta \rangle - d_{\pi_t} \langle \pi_t(y)\eta, \pi_t(x)\eta \rangle \\
    &= d_{\pi_t} \sigma_t(y,y^{-1}x) \Big( \sum_{\lambda \in \Lambda} \sigma_t(y^{-1}\lambda,\lambda^{-1}x) C_{\eta,\phi_{\lambda}}^t(y^{-1}\lambda) C_{\eta, \eta}^t(\lambda^{-1}x) - C_{\eta,\eta}^t(y^{-1}x) \Big) .
\end{align*}
To ease notation, we set
\[ \tilde{K}_t(x,y) := d_{\pi_t}^{-1} \overline{ \sigma_t(y,y^{-1}x)} K_t(x,y), \quad x,y \in G. \]
Then $| K_t(x,y) | = d_{\pi_t} | \tilde{K}_t(x,y) |$ for all $x,y \in G$.

Note that for $t = 1$, the operator $\tilde{\fram}^1$ is the mixed frame operator for $\pi(\Lambda)\eta$ and its canonical dual frame, so that $\tilde{\fram}^1 = I_{\Hpi}$ by \eqref{eq:dual-frame}. This gives that $K_1(x,y) = 0$ for all $x,y \in G$, and thus $\tilde{K}_1(x,y) = 0$ for all $x, y \in G$. This leads us to consider
\begin{align*}
    &\tilde{K}_t(x,y) \\
    &= \tilde{K}_t(x,y) - \tilde{K}_1(x,y) \\
    &= \sum_{\lambda \in \Lambda} \big( \sigma_t(y^{-1}\lambda,\lambda^{-1}x) C_{\eta,\phi_{\lambda}}^t(y^{-1}\lambda) C_{\eta, \eta}^t(\lambda^{-1}x) - \sigma_1(y^{-1}\lambda,\lambda^{-1}x) C_{\eta,\phi_{\lambda}}^1(y^{-1}\lambda) C_{\eta, \eta}^1(\lambda^{-1}x) \big) \\
& \quad \quad + C_{\eta,\eta}^1(y^{-1}x) - C_{\eta,\eta}^t(y^{-1}x) . \numberthis \label{eq:final-term}
\end{align*}
Using telescoping, we may write the summand above as a sum of three terms:
\begin{align}
    & \bigg( \sigma_t(y^{-1}\lambda,\lambda^{-1}x) - \sigma_1(y^{-1}\lambda,\lambda^{-1}x) \bigg) C_{\eta,\phi_\lambda}^t(y^{-1}\lambda) C_{\eta, \eta}^t(\lambda^{-1}x), \label{eq:frame-proof-summand-1} \\
    & \sigma_1(y^{-1}\lambda,\lambda^{-1}x) \bigg( C_{\eta,\phi_\lambda}^t(y^{-1}\lambda) - C_{\eta,\phi_\lambda}^1(y^{-1}\lambda) \bigg) C_{\eta, \eta}^t(\lambda^{-1}x), \label{eq:frame-proof-summand-2} \\
    & \sigma_1(y^{-1}\lambda,\lambda^{-1}x) C_{\eta,\phi_\lambda}^1(y^{-1}\lambda) \bigg( C_{\eta, \eta}^t(\lambda^{-1}x) - C_{\eta, \eta}^1(\lambda^{-1}x) \bigg) . \label{eq:frame-proof-summand-3}
\end{align}
We will next estimate each of these three terms.
Consider first the term in \eqref{eq:frame-proof-summand-1}. Fix $s' > 0$. Using \zcref{lem:cocycle-convergence}, there exists $\varepsilon'_{s'}(t) \to 0$ as $t \to 1$ such that
\[ \big| \sigma_t(y^{-1}\lambda,\lambda^{-1}x) - \sigma_1(y^{-1}\lambda,\lambda^{-1}x) \big| \leq \varepsilon'_{s'}(t) w_{s'}(y^{-1}\lambda) w_{s'}(\lambda^{-1}x) . \]
Fixing another $s > 0$ and applying \eqref{eq:schwartz-bounded}, we deduce that
\begin{align}
    &\Bigg| \bigg( \sigma_t(y^{-1}\lambda,\lambda^{-1}x) - \sigma_1(y^{-1}\lambda,\lambda^{-1}x) \bigg) C_{\eta,\phi_\lambda}^t(y^{-1}\lambda) C_{\eta, \eta}^t(\lambda^{-1}x) \Bigg| \nonumber \\
    &\quad \quad \quad \quad \leq \varepsilon'_{s'}(t) w_{s'}(y^{-1}\lambda) w_{s'}(\lambda^{-1}x) C_{s} w_{-s}(y^{-1}\lambda) C_s w_{-s}(\lambda^{-1}x) \nonumber \\
    & \quad \quad \quad \quad \leq C_s^2 \varepsilon'_{s'}(t) w_{s'-s}(y^{-1}\lambda) w_{s'-s}(\lambda^{-1}x) . \label{eq:estimate-1}
\end{align}
For \eqref{eq:frame-proof-summand-2}, we apply \eqref{eq:schwartz-inequality} and \eqref{eq:schwartz-bounded} to obtain
\begin{align}
    &\Big|  \sigma_1(y^{-1}\lambda,\lambda^{-1}x) \big( C_{\eta,\phi_\lambda}^t(y^{-1}\lambda) - C_{\eta,\phi_\lambda}^1(y^{-1}\lambda) \big) C_{\eta, \eta}^t(\lambda^{-1}x) \Big| \nonumber \\
    &\quad \quad \quad \quad  \leq \varepsilon_s(t) C_s w_{-s}(y^{-1}\lambda) w_{-s}(\lambda^{-1}x). \label{eq:estimate-2}
\end{align}
For \eqref{eq:frame-proof-summand-3}, we also apply \eqref{eq:schwartz-inequality} and \eqref{eq:schwartz-bounded}, which yields
\begin{align}
    &\Big| \sigma_1(y^{-1}\lambda,\lambda^{-1}x) C_{\eta,\phi_\lambda}^1(y^{-1}\lambda) \big( C_{\eta, \eta}^t(\lambda^{-1}x) - C_{\eta, \eta}^1(\lambda^{-1}x) \big) \Big| \nonumber \\
    &\leq \varepsilon_s(t) C_s w_{-s}(y^{-1}\lambda) w_{-s}(\lambda^{-1}x) . \label{eq:estimate-3}
\end{align}
In addition, we estimate the final term \eqref{eq:final-term} also using \eqref{eq:schwartz-inequality} as follows:
\begin{equation}
    |C_{\eta,\eta}^1(y^{-1}x) - C_{\eta,\eta}^t(y^{-1}x)| \leq \varepsilon_s(t) w_{-s}(y^{-1}x). \label{eq:estimate-4}
\end{equation}
Combining \eqref{eq:estimate-1}, \eqref{eq:estimate-2}, \eqref{eq:estimate-3} and \eqref{eq:estimate-4}, we arrive at
\begin{align*}
    |\tilde{K}_t(x,y)| &\leq C_s^2 \varepsilon'_{s'}(t) \sum_{\lambda \in \Lambda} w_{s'-s}(y^{-1}\lambda) w_{s'-s}(\lambda^{-1}x) + \varepsilon_s(t) C_s \sum_{\lambda \in \Lambda} w_{-s}(y^{-1}\lambda)  w_{-s}(\lambda^{-1}x) \\
    &\quad \quad + \varepsilon_s(t) C_s \sum_{\lambda \in \Lambda} w_{-s}(y^{-1}\lambda) w_{-s}(\lambda^{-1}x) + \varepsilon_s(t)   w_{-s}(y^{-1}x) \\
    &\leq (C_s^2 \varepsilon'_{s'}(t) + 2 C_s \varepsilon_s(t)) \sum_{\lambda \in \Lambda} w_{s'-s}(y^{-1}\lambda) w_{s'-s}(\lambda^{-1}x) + \varepsilon_s(t) w_{-s}(y^{-1}x) .
\end{align*}
Choose $s,s' > 0$ such that $s-s' > d(G) + 1$. Recall that $d_{\pi_t} = t^d d_{\pi}$ for some $d \in \mathbb{N}$.
Integrating over $ G$ and applying \eqref{eq:integral-finite} and \eqref{eq:sum-finite}, we obtain, for any $y \in G$,
\begin{align*}
    \int_G |K_t(x,y)| \; d\mu_G (x)
    & \leq d_{\pi_t} \int_G |\tilde{K}_t(x,y)| \; d\mu_G (x) \\
    & \leq  d_{\pi_t} (C_s^2 \varepsilon'_{s'} (t) + 2 C_s \varepsilon_s(t)) \sum_{\lambda \in \Lambda} w_{s'-s}(y^{-1}\lambda) \int_G w_{s'-s}(\lambda^{-1}x) \; d\mu_G (x) \\
    &\quad \quad +  d_{\pi_t} \varepsilon_s(t) \int_G w_{-s}(y^{-1}x) \; d\mu_G (x) \\
    &  \leq t^{d} d_\pi (C_s^2 \varepsilon'_{s'}(t) + 2 C_s \varepsilon_s(t)) J_{s-s'} I_{s-s'} + t^{d} d_\pi \varepsilon_s(t) I_s  ,
\end{align*}
which implies that
\[
\esssup_{y \in G} \int_G |K_t (x,y) | \; d\mu_G (x) \to 0 \quad \text{as} \quad t \to 1.
\]
By symmetry of the bound for $|K_t(x,y)|$, the same argument shows that $\int_G | K_t(x,y) | d\mu_G (y) \to 0$ uniformly in $x \in G$ as $t \to 1$. By Schur's test, we therefore conclude that
\[ \big \| V^t (\tilde{\fram}^t - I) (V^t)^* \big \|_{\mathcal{B}(L^2(G))} \to 0 \quad \text{as } t \to 1 . \]
Since $V^t$ is an isometry, this implies that $\tilde{\fram}^t \to I$ in operator norm on $\mathcal{B}(\mathcal{H})$ as $t \to 1$, which finishes the proof of (i).
\\~\\
(ii)  Suppose $\pi(\Lambda)\eta$ is a Riesz sequence in $\Hrho$. The matrix representation of $\tilde{\gram}^t = \tilde{\anal}^t \synt^t$ is given by
\begin{align}
    (\tilde{\gram}^t)_{\lambda',\lambda} &= \langle \pi_t(\lambda) \eta, \pi_t(\lambda') \phi_{\lambda'} \rangle \nonumber \\
    &= \sigma_t(\lambda,\lambda^{-1}\lambda') \langle \eta, \pi_t(\lambda^{-1}\lambda') \phi_{\lambda'} \rangle \nonumber \\
    &= \sigma_t(\lambda,\lambda^{-1}\lambda') C_{\eta,\phi_{\lambda'}}^t(\lambda^{-1}\lambda') , \label{eq:gramian-expression}
\end{align}
where we used the covariance relation \eqref{eq:covariance}.
Since $(\tilde{\eta}_\lambda)_{\lambda \in \Lambda}$ is biorthogonal to $\pi(\Lambda)\eta$, see \eqref{eq:biorthognal}, we obtain that $\tilde{\gram}^1 = I$, so \eqref{eq:gramian-expression} gives
\begin{equation}
    \big| C_{\eta,\phi_{\lambda'}}^1(\lambda^{-1}\lambda') \big| = 0, \quad \lambda,\lambda' \in \Lambda,\; \lambda \neq \lambda'. \label{eq:orthogonality-in-terms-of-c}
\end{equation}

We  consider two cases.
If $\lambda = \lambda'$, then the biorthogonality of $\pi(\Lambda)\eta$ and $(\tilde{\eta}_\lambda)_{\lambda \in \Lambda}$ gives
\[
    \big| (\tilde{\gram}^t - I)_{\lambda,\lambda} \big|
    = \big|(\tilde{\gram}^t)_{\lambda,\lambda} - 1\big|
    = \big|\langle \pi_t(\lambda)\eta, \pi_t(\lambda)\pi(\lambda)^*\tilde{\eta}_\lambda \rangle - 1\big|
    = \big|\langle \pi(\lambda)\eta, \tilde{\eta}_\lambda \rangle - 1\big|
    = 0.
\]
On the other hand, if $\lambda \neq \lambda'$, then \eqref{eq:gramian-expression} and \eqref{eq:orthogonality-in-terms-of-c} together with \eqref{eq:schwartz-inequality} give
\begin{align*}
    \big| (\tilde{\gram}^t - I)_{\lambda',\lambda} \big|
    &= \big| \tilde{\gram}^t_{\lambda',\lambda} \big|
    = \big|C_{\eta,\phi_{\lambda'}}^t(\lambda^{-1}\lambda')\big| \\
    &= \big|C_{\eta,\phi_{\lambda'}}^t(\lambda^{-1}\lambda')
          - C_{\eta,\phi_{\lambda'}}^1(\lambda^{-1}\lambda')\big| \\
    &\leq \varepsilon_s(t) w_{-s}(\lambda^{-1}\lambda').
\end{align*}

Thus, for all $\lambda,\lambda' \in \Lambda$, we have the estimate
\begin{equation*}
    \big| (\tilde{\gram}^t - I)_{\lambda',\lambda} \big|
    \leq \varepsilon_s(t) w_{-s}(\lambda^{-1}\lambda'),
    \quad t \in I.
\end{equation*}
Letting $s > d(G)+1$, \eqref{eq:sum-finite} yields
\[
    \sup_{\lambda \in \Lambda}
    \sum_{\lambda' \in \Lambda}
    |(\tilde{\gram}^t - I)_{\lambda',\lambda}|
    \leq \varepsilon_s(t) \cdot J_s
    \to 0 \quad \text{as } t \to 1.
\]
 By Schur's test, we conclude that
\[
    \big\| \tilde{\gram}^t - I \big\|_{\mathcal{B}(\ell^2(\Lambda))}
    \to 0 \quad \text{as } t \to 1.
\]
This finishes the proof.
\end{proof}

We obtain the following important consequence of \zcref{thm:common-deformation}. 

\begin{corollary}\label{cor:frame-riesz-stability}
Let $\eta \in \Hrho^\infty$. Then the following assertions hold:
\begin{enumerate}
    \item[(i)] If $\pi(\Lambda)\eta$ is a frame for $\Hrho$, then $\pi_t(\Lambda)\eta$ is a frame for $\Hrho$ for all $t > 0$ sufficiently close to $1$.
    \item[(ii)] If $\pi(\Lambda)\eta$ is a Riesz sequence in $\Hrho$, then $\pi_t(\Lambda)\eta$ is a Riesz sequence in $\Hrho$ for all $t > 0$ sufficiently close to $1$.
\end{enumerate}
\end{corollary}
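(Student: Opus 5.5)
The corollary follows from \zcref{thm:common-deformation} together with Neumann series arguments. Both parts rest on the same observation: a small perturbation of the identity is invertible, and such an invertibility statement can be converted into a one-sided invertibility of the synthesis operator $\synt^t$.

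\textbf{Part (i).} Suppose $\pi(\Lambda)\eta$ is a frame. By \zcref{thm:common-deformation}(i), $\tilde{\fram}^t = \synt^t \tilde{\anal}^t \to I$ in operator norm as $t \to 1$. Choose $\delta>0$ such that $\|\tilde{\fram}^t - I\| < 1$ whenever $|t-1|<\delta$ (and $t \in I$). For such $t$, the operator $\tilde{\fram}^t$ is invertible on $\Hrho$ by the Neumann series. Then
\[
R^t := \tilde{\anal}^t (\tilde{\fram}^t)^{-1}
\]
is a bounded operator $\Hrho \to \ell^2(\Lambda)$, since $\tilde{\anal}^t$ is bounded by the Bessel property from \zcref{lem:bessel-sequence-t}. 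It satisfies $\synt^t R^t = I$, so $\synt^t$ has a bounded right inverse. By definition, $\pi_t(\Lambda)\eta$ is then a frame for $\Hrho$.

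\textbf{Part (ii).} Suppose $\pi(\Lambda)\eta$ is a Riesz sequence. By \zcref{thm:common-deformation}(ii), $\tilde{\gram}^t = \tilde{\anal}^t \synt^t \to I$ in the operator norm of $\mathcal{B}(\ell^2(\Lambda))$. Hence $\tilde{\gram}^t$ is invertible for $t$ close to $1$. Then
\[
L^t := (\tilde{\gram}^t)^{-1} \tilde{\anal}^t
\]
is bounded and satisfies $L^t \synt^t = I_{\ell^2(\Lambda)}$. So $\synt^t$ has a bounded left inverse, and $\pi_t(\Lambda)\eta$ is a Riesz sequence.

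\textbf{Main point to check.} The only subtlety is that the definitions of frame and Riesz sequence in the paper are phrased via one-sided invertibility of $\synt^t$, or equivalently via invertibility of $\fram^t$ and $\gram^t$. So it suffices to exhibit a one-sided inverse, which the mixed operators provide. One must also confirm that all operators involved are bounded. This follows from \zcref{lem:boundedness-frame-sequence} and \zcref{lem:bessel-sequence-t}: the family $(\pi(\lambda)^*\tilde{\eta}_\lambda)_{\lambda\in\Lambda}$ is bounded in $\Hpi^\infty$, so the system \eqref{eq:coherent-system-t} is a Bessel sequence. In both parts the hypothesis "frame" or "Riesz sequence" implies "frame sequence", so \zcref{thm:common-deformation} applies.
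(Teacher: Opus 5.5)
Your proposal is correct and follows the paper's proof essentially verbatim: \zcref{thm:common-deformation} gives $\tilde{\fram}^t \to I$ (resp.\ $\tilde{\gram}^t \to I$) in norm, and then $\tilde{\anal}^t(\tilde{\fram}^t)^{-1}$ is a bounded right inverse (resp.\ $(\tilde{\gram}^t)^{-1}\tilde{\anal}^t$ is a bounded left inverse) of $\synt^t$. Your extra remarks on boundedness via \zcref{lem:boundedness-frame-sequence} and \zcref{lem:bessel-sequence-t} are exactly what the paper establishes inside the proof of \zcref{thm:common-deformation}.
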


\begin{proof}
Let $\synt^t$ denote the synthesis operator of $\pi_t(\Lambda) \eta$ and let $\tilde{\anal}^t$ denote the analysis operator of $(\pi_t(\lambda) \pi(\lambda)^* \tilde{\eta}_\lambda )_{\lambda \in \Lambda}$, where $(\tilde{\eta}_{\lambda})_{\lambda \in \Lambda}$ is the canonical dual frame sequence of $\pi(\Lambda) \eta$.
\\~\\
(i) Suppose $\pi(\Lambda)\eta$ is a frame for $\Hrho$. Then, by \zcref{thm:common-deformation}, we have that $\tilde{\fram}^t := \synt^t \tilde{\anal}^t \to I_{\Hpi}$ in operator norm as $t \to 1$. In particular, $\tilde{\fram}^t$ is invertible for $t$ sufficiently close to $1$, and
\[ \synt^t (\tilde{\anal}^t (\tilde{\fram}^t)^{-1}) = I_{\Hpi} . \]
This shows that $\synt^t$ has a bounded right inverse for $t$ sufficiently close to $1$, showing that $\pi_t(\Lambda)\eta$ is a frame for such $t$.
\\~\\
(ii) Supposing $\pi(\Lambda)\eta$ is a Riesz sequence in $\Hrho$. Then \zcref{thm:common-deformation}  gives that $\tilde{\gram}^t = \tilde{\anal}^t \synt^t \to I$ in operator norm as $t \to 1$. Hence, $\tilde{\gram}^t$ is invertible for $t$ sufficiently close to $1$, and
\[ ((\tilde{\gram}^t)^{-1} \tilde{\anal}^t) \synt^t = I_{\ell^2 (\Lambda)} . \]
This shows that $\synt^t$ has a bounded left inverse for all $t$ sufficiently close to $1$, which implies that $\pi_t(\Lambda)\eta$ is a Riesz sequence in $\Hrho$ for such $t$.
\end{proof}

\subsection{Density conditions for coherent systems} \label{sec:density}
Let $(K_n)_{n \in \mathbb{N}}$ be any strong F\o lner sequence in $G$, i.e., a sequence of nonnull compact sets $K_n \subseteq G$ satisfying
\[
\lim_{n \to \infty} \frac{\mu_G (K_n K \cap K_n^c K)}{\mu_G (K_n)} = 0
\]
for all compact $K \subseteq G$. For example, if $(r_n)_{n \in \mathbb{N}}$ is a sequence in $(0, \infty)$ such that $r_n \to \infty$, then the closed balls $K_n := \overline{B_{r_n} (e)}$ associated to the Carnot-Carath\'edory metric form a strong F\o lner sequence, see \cite[Corollary 1.10]{breuillard2014geometry}.

For a relatively separated set $\Lambda \subseteq G$, we define its lower and upper Beurling density by
\[
D^- (\Lambda) := \lim_{n \to \infty} \inf_{x \in G} \frac{\# (\Lambda \cap x K_n)}{\mu_G (K_n)} 
\]
and
\[
D^+ (\Lambda) := \lim_{n \to \infty} \sup_{x \in G} \frac{\# (\Lambda \cap x K_n)}{\mu_G (K_n)},  
\]
respectively. The Beurling densities are independent of the choice of strong F\o lner sequence, cf. \cite[Proposition 5.14]{pogorzelski2022leptin}.

Using the results from the previous subsection, we  prove \zcref{thm:main_intro}.

\begin{proof}[Proof of \zcref{thm:main_intro}]
Recall that, by \zcref{lem:dim-scaled-rep}, we have $d_{\pi_t} = t^d d_{\pi}$ for some $d \in \mathbb{N}$. 
\\~\\
(i) Suppose that $\pi(\Lambda) \eta$ is a frame for $\Hpi$ with $\eta \in \Hrho^\infty$. By \zcref{cor:frame-riesz-stability}, there exists $t > 1$ such that $\pi_t(\Lambda) \eta$ is a frame for $\Hrho$. The necessary density conditions for frames (see \zcref{prop:necessary_density_conditions}) then give that $D^-(\Lambda) \geq d_{\pi_t}$, which implies that
\[ D^-(\Lambda) \geq d_{\pi_t} = t^d d_\pi > d_\pi , \]
which proves the claim.
\\~\\
(ii) Suppose that $\pi(\Lambda) \eta$ is a Riesz sequence in $\Hrho$ with $\eta \in \Hrho^\infty$. By \zcref{cor:frame-riesz-stability}, there exists $t < 1$  such that $\pi_t(\Lambda) \eta$ is a Riesz sequence in $\Hrho$. The necessary density conditions for Riesz sequences (see \zcref{prop:necessary_density_conditions}) then give that $D^+(\Lambda) \leq d_{\pi_t}$, and thus
    \[ D^+(\Lambda) \leq d_{\pi_t} = t^d d_\pi < d_\pi , \]
    completing the proof.
\end{proof}

\section{Starting from projective representations} \label{sec:projective}

In this section we prove a version of the main theorem starting with a projective representation that is square-integrable in the strict sense rather than a genuine representation that is square-integrable modulo the center. This follows from the correspondence between such square-integrable representations, see, e.g., \cite[Appendix A]{papageorgiou2025counting}.

Throughout this section, we let $G$ be a nontrivial connected, simply connected, nilpotent Lie group equipped with a left Haar measure $\mu_G$. A projective unitary representation $(\pi, \Hpi)$ of $G$ is a strongly continuous map $\pi : G \to \mathcal{U}(\Hpi)$ satisfying $\pi(e_G) = I_{\Hpi}$ and
\[ 
\pi(x) \pi(y) = \sigma(x,y) \pi(x y), \quad x,y \in G 
\]
for a $2$-cocycle $\sigma : G \times G \to \mathbb{T}$. For simplicity, we assume that $\sigma$ is smooth. In this case, we denote the space of smooth vectors by $\Hpi^{\infty}$, that is, the space of all $f \in \Hpi$ such that $x \mapsto \pi(x) f$ is smooth on $G$. We assume that $(\pi, \Hpi)$ is irreducible, i.e., the only closed $\pi$-invariant subspaces are $\{0\}$ and $\Hpi$. An irreducible projective representation $(\pi, \Hpi)$ is called \emph{square-integrable} if there exists $f \in \Hpi \setminus \{0\}$ such that $\langle f, \pi(\cdot) f \rangle \in L^2 (G)$. In this case, there exists $d_{\pi} > 0$, called the \emph{formal degree}, such that
\[
\int_G |\langle f_1, \pi(x) f_2 \rangle |^2 \; d\mu_G (x) = d_{\pi}^{-1} \| f_1 \|^2 \| f_2 \|^2
\]
for all $f_1, f_2 \in \Hpi$.

We will deduce the following theorem from \zcref{thm:main_intro}.

\begin{theorem}
Let $(\pi, \Hpi)$ be an irreducible, square-integrable projective representation of $G$ of formal degree $d_{\pi} > 0$.
Let $\eta \in \Hpi^\infty$ and let $\Lambda \subseteq G$ be countable. The following hold:
\begin{enumerate}
    \item[(i)] If $\pi(\Lambda)\eta$ is a frame for $\Hrho$, then $D^-(\Lambda) > d_\pi$.
    \item[(ii)] If $\pi(\Lambda)\eta$ is a Riesz sequence in $\Hrho$, then $D^+(\Lambda) < d_\pi$.
\end{enumerate}
\end{theorem}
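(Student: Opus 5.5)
The plan is to reduce to \zcref{thm:main_intro} by realizing $\pi$, up to a phase factor, as $\rho \circ s$ for a genuine representation $\rho$ of a central extension $N$ of $G$ by $\R$. Since $G$ is simply connected nilpotent, the smooth cocycle $\sigma$ is cohomologous to one of the form $e^{2\pi i\omega}$ with $\omega\colon G\times G\to\R$ a smooth real $2$-cocycle. This follows from the vanishing of $H^2$ with coefficients in $\Z$ and the identification of smooth $\mathbb{T}$-valued cohomology with $H^2_c(G,\R)$. Replacing $\pi$ by $v\cdot\pi$ for a smooth $v\colon G\to\mathbb{T}$ changes neither the frame or Riesz sequence property of $\pi(\Lambda)\eta$, nor smoothness of vectors, nor the formal degree. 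So we may assume $\sigma=e^{2\pi i\omega}$.

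Next, form the group $N = G\times_\omega \R$ with product
\[
(x,a)(y,b)=(xy,\,a+b+\omega(x,y)).
\]
$N$ is a connected, simply connected nilpotent Lie group, being a central extension of a nilpotent group by $\R$. Define $\rho(x,a)=e^{2\pi i a}\pi(x)$, up to a sign convention chosen so that it is multiplicative. This gives a genuine unitary representation of $N$, irreducible since $\pi$ is, and its smooth vectors coincide with $\Hpi^\infty$. Square-integrability of $\pi$ on $G$ gives square-integrability of $\rho$ modulo $\R\subseteq Z(N)$.

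The main obstacle is that the center of $N$ may be larger than the central $\R$, in which case \zcref{thm:main_intro} is not directly about $G$. To handle this, I would use the correspondence in \cite[Appendix A]{papageorgiou2025counting}: irreducibility together with square-integrability of $\pi$ forces $\sigma$ to be totally nondegenerate. Indeed, if $z\in Z(N)$ projected to a nontrivial $x\in G$, then $\pi(x)$ would commute with all $\pi(y)$ up to the scalar $\sigma(x,y)\overline{\sigma(y,x)}=1$. Schur's lemma would then make $\pi(x)$ scalar, so $|\langle f,\pi(\cdot)f\rangle|$ would be invariant under the noncompact subgroup generated by such $x$, contradicting square-integrability. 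Hence $Z(N)=\R$ and $N/Z(N)\cong G$. Since $G$ is nontrivial, $N$ is nonabelian.

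With this identification, $\pi=\rho\circ s$ for the cross-section $s(x)=(x,0)$, and the formal degrees agree once the Haar measures are matched. $\Lambda\subseteq G=N/Z(N)$ with representatives $s(\lambda)$, and the densities are the same quantities. If $\Lambda$ is not relatively separated, then in the frame case $D^-(\Lambda)=\infty$ trivially, and a Riesz sequence with bounded Bessel constant forces relative separation. This leaves the relatively separated case. Finally, \zcref{thm:main_intro}(i) and (ii) yield $D^-(\Lambda)>d_\pi$ and $D^+(\Lambda)<d_\pi$, respectively.
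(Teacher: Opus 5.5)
Your proposal is correct and follows essentially the same route as the paper. The paper's universal cover of $G\times_\sigma\mathbb{T}$ is exactly your $G\times_\omega\R$, where $\omega$ is a continuous real lift of $\sigma$; this lift exists directly because $G\times G$ is simply connected, so your cohomological detour and phase change are unnecessary. Your Schur-lemma argument, in which a central element over $x\neq e_G$ would make $|\langle f,\pi(\cdot)f\rangle|$ invariant under the infinite cyclic group generated by $x$, is a self-contained version of the paper's appeal to compactness of the projective kernel.

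One small slip: a non-relatively-separated $\Lambda$ need not satisfy $D^-(\Lambda)=\infty$. This case cannot occur, however, since the upper frame bound is a Bessel bound and forces relative separation, just as in your Riesz-sequence case.
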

\begin{proof}
Let $\sigma$ be the smooth $2$-cocycle of $(\pi, \Hpi)$ and consider the group $G_\sigma:=G\times\mathbb T$ with multiplication
$
    (x,u)(y,v)=(xy,uv\sigma(x,y)).
$
Denote by $N$ its universal covering group, which is connected, simply connected,
and nilpotent. In additon, we have that $N=G\times\mathbb R$ as a manifold, with covering
map $p(x,r)=(x,e^{2\pi ir})$. The unitary operators
\[
    \rho(x,r) =e^{2\pi ir}\pi(x), \quad (x,r) \in N,
\]
define an irreducible unitary representation of $N$ on the Hilbert space $\Hpi$. 

We claim that the center $Z (N) = \{e_G\}\times \R$.
For this, note first that, for fixed $z\in \{e_G \} \times \R$, the map
$n\mapsto nzn^{-1}z^{-1}$ is continuous on $N$ and its range is contained in $\ker(p)$. Hence, as $N$ is connected, it follows that $n\mapsto nzn^{-1}z^{-1}$ is contant, which implies that
$\{e_G \} \times \R \subseteq Z(N)$.
Conversely, if $(x,r)\in Z(N)$, then Schur's lemma implies that
$\rho(x,r) \in \mathbb{T} I_{\Hpi}$, and hence $\pi(x) \in \mathbb{T} I_{\Hpi}$. Since the projective kernel $P_{\pi} := \{ x \in G : \pi(x) \in \mathbb{T} I_{\Hpi} \}$ is compact by \cite[Lemma 2.3]{enstad2022sufficient}, it must be trivial, and thus $x = e_G$. 

Since $N / Z(N) \cong G$ by the above, using the smooth section $s : G \to N, x \mapsto (x,0)$, we obtain
$\rho(s(x))=\pi(x)$. In particular, it follows immediately that $\rho$ is square-integrable
modulo $Z(N)$ with $d_\rho=d_\pi$ and that the spaces of smooth vectors of $\pi$ and $\rho$ agree. The claims follow therefore directly from 
\zcref{thm:main_intro}.
\end{proof}

\appendix

\section{Spectral invariance}\label{sec:spectral-invariance}

Let $\Lambda \subseteq G$ be relatively separated in a connected simply connected nilpotent Lie group $G$. Given a bounded linear operator $A$ on $\ell^2(\Lambda)$, we write $(A_{\lambda,\lambda'})_{\lambda,\lambda' \in \Lambda}$ for its representation with respect to the standard basis of $\ell^2(\Lambda)$, that is,
\[ A_{\lambda,\lambda'} = \langle A e_{\lambda'},  e_{\lambda} \rangle, \quad \lambda, \lambda' \in \Lambda. \]

As in \zcref{sec:coherent}, let $w_s(x) = (1+d(e,x))^s$, $s > 0$, denote the weights coming from a left-invariant Carnot-Carath\'eodory metric $d$ on $G$. We consider the following algebra of infinite matrices over the index set $\Lambda$:
\[ \mathcal{J}(\Lambda) = \Big\{ A \in \mathcal{B}(\ell^2(\Lambda)) \; : \; \forall s > 0 \; \sup_{\lambda,\lambda' \in \Lambda}|A_{\lambda,\lambda'}| w_s(\lambda^{-1}\lambda') < \infty \Big\} . \]
The matrix algebra $\mathcal{J}(\Lambda)$ is often referred to as the \emph{Jaffard matrix algebra} \cite{jaffard-1990}.

\begin{theorem}\label{thm:spectral-invariance}
If a self-adjoint operator with closed range belongs to $\mathcal{J}(\Lambda)$, then its Moore--Penrose inverse also belongs to $\mathcal{J}(\Lambda)$.
\end{theorem}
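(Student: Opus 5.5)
The plan is to reduce the statement to two ingredients: inverse-closedness of $\mathcal{J}(\Lambda)$ in $\mathcal{B}(\ell^2(\Lambda))$, and a holomorphic functional calculus description of the Moore--Penrose inverse. The first ingredient is that $\mathcal{J}(\Lambda)$ is a Banach-type (in fact Fr\'echet, being an intersection over $s$) $*$-subalgebra of $\mathcal{B}(\ell^2(\Lambda))$ that is inverse-closed. For each fixed $s$, let $\mathcal{J}_s(\Lambda)$ be the set of matrices with $\sup_{\lambda,\lambda'}|A_{\lambda,\lambda'}|w_s(\lambda^{-1}\lambda')<\infty$. I will verify that for $s > d(G)+1$ this is a Banach algebra, closed under adjoints, and continuously embedded in $\mathcal{B}(\ell^2(\Lambda))$.
\begin{itemize}
\item The embedding comes from Schur's test combined with \zcref{lem:counting_estimate}.
\item Closure under adjoints uses the symmetry $d(e,x^{-1})=d(e,x)$ of a left-invariant metric.
\item The algebra property uses the weight inequality $w_s(\lambda^{-1}\lambda'')\le 2^s\big(w_s(\lambda^{-1}\lambda')+w_s(\lambda'^{-1}\lambda'')\big)$, which follows from the triangle inequality. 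Splitting the product sum accordingly gives $\|AB\|_{\mathcal{J}_s}\lesssim \|A\|_{\mathcal{J}_s}\|B\|_{\mathcal{J}_0'}+\|A\|_{\mathcal{J}_0'}\|B\|_{\mathcal{J}_s}$, where $\|\cdot\|_{\mathcal{J}_0'}$ is a Schur-type norm dominated by $\|\cdot\|_{\mathcal{J}_{s_0}}$ for $s_0>d(G)+1$.
\end{itemize}
Inverse-closedness in $\mathcal{B}(\ell^2(\Lambda))$ is then the content of Jaffard's theorem \cite{jaffard-1990}, as extended by Sun \cite{sun2007} to index sets in metric measure spaces with polynomial growth. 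Here the growth estimate \eqref{eq:rel-sep-pol-growth} supplies exactly the required hypotheses, namely relative separation and polynomial growth of counting functions. Applied for every $s$, it shows that $\mathcal{J}(\Lambda)=\bigcap_s\mathcal{J}_s(\Lambda)$ is inverse-closed. Checking Sun's hypotheses, in particular the form of the quasi-metric and the doubling or growth condition on $(\Lambda,d)$, is the step I would do most carefully. If his setting does not cover it verbatim, I would instead reprove inverse-closedness via Brandenburg's trick or a Hulanicki-type subexponential argument on $\mathcal{J}_s$; this is the main obstacle.

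Given inverse-closedness, the Moore--Penrose inverse follows by a functional calculus argument. Let $A=A^*\in\mathcal{J}(\Lambda)$ have closed range. Then $0$ is at most an isolated point of $\sigma_{\mathcal{B}}(A)$, so $\sigma_{\mathcal{B}}(A)\subseteq\{0\}\cup\{|z|\ge\delta\}$ for some $\delta>0$. Inverse-closedness gives $\sigma_{\mathcal{J}_s}(A)=\sigma_{\mathcal{B}}(A)$. Define $f$ holomorphic on a neighborhood of the spectrum by $f=0$ near $0$ and $f(z)=1/z$ near the rest. The Riesz--Dunford integral
\[ f(A)=\frac{1}{2\pi i}\oint_\Gamma f(z)(z-A)^{-1}\,dz \]
converges in each Banach algebra $\mathcal{J}_s(\Lambda)$, because the resolvents lie there and are continuous in $z$. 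Its value coincides with the $\mathcal{B}(\ell^2)$-computation, which for self-adjoint $A$ is exactly $A^\dagger$. Hence $A^\dagger\in\mathcal{J}_s(\Lambda)$ for all $s$, that is, $A^\dagger\in\mathcal{J}(\Lambda)$.
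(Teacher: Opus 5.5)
Your proof is correct, but the route differs from the paper's. The paper never isolates inverse-closedness. It identifies $\mathcal{J}(\Lambda)$ with $\bigcap_{s>0}\mathcal{A}_{\infty,v_s}(\Lambda,d,\mu_c)$ in Sun's notation. It then checks Sun's growth hypothesis via \eqref{eq:rel-sep-pol-growth} and the admissibility of $v_s(\lambda,\lambda')=w_s(\lambda^{-1}\lambda')$ for $s>d(G)+1$. Finally it invokes \cite[Theorem 5.1]{sun2007}, which is already a pseudo-inverse version of Wiener's lemma. Its input is the lower bound $\|Ac\|\ge D\|c\|$ on $(\ker A)^\perp$ coming from the closed range, and its output is directly $A^\dagger\in\mathcal{A}_{\infty,v_s}$.

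You instead use only the plain Wiener lemma for the Banach algebras $\mathcal{J}_s(\Lambda)$, $s>d(G)+1$. You then pass to $A^\dagger$ by a general Banach-algebra argument:
\begin{enumerate}
\item[(a)] spectral invariance;
\item[(b)] isolation of $0$ in the spectrum of a self-adjoint closed-range operator;
\item[(c)] a Riesz--Dunford integral computed in $\mathcal{J}_s(\Lambda)$ and identified with $A^\dagger$ through the spectral theorem in $\mathcal{B}(\ell^2(\Lambda))$.
\end{enumerate}

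The paper's route is a single citation and needs no structural facts about $\mathcal{J}_s(\Lambda)$. Yours requires verifying that each $\mathcal{J}_s(\Lambda)$ is a Banach $*$-algebra continuously embedded in $\mathcal{B}(\ell^2(\Lambda))$. You carry this out correctly via Schur's test, \zcref{lem:counting_estimate}, the symmetry of $w_s$ and the splitting of the weight. In return, your route depends only on the most standard form of Wiener's lemma. It would therefore go through unchanged if that ingredient came from another source, such as your Brandenburg or Hulanicki fallback. Self-adjointness enters your argument essentially in the last step, to identify $f(A)$ with $A^\dagger$, which matches the hypothesis of the statement.
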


\begin{proof}
We apply \cite[Theorem 5.1]{sun2007} and follow the notation there. Our setting is the discrete set $ \Lambda$, the metric $d|_{\Lambda \times \Lambda}$ and the counting measure $\mu_c$ on $\Lambda$. The equation \eqref{eq:rel-sep-pol-growth} gives the necessary polynomial growth property of $d|_{\Lambda \times \Lambda}$ and the counting measure; see  \cite[Equation (2.1)]{sun2007}. For $s > 0$ we consider the weight in two variables $v_s(\lambda,\lambda') = w_s(\lambda^{-1}\lambda')$ for $\lambda,\lambda' \in \Lambda$. We are concerned with the corresponding Schur class defined in  \cite[Equation (2.13)]{sun2007} with $p= \infty$:
\[ \mathcal{A}_{\infty,v_s}(\Lambda,d,\mu_c) = \Big\{ A \in \C^{\Lambda \times \Lambda}: \sup_{\lambda \in \Lambda} \| A_{\lambda, \cdot}v_s(\lambda, \cdot) \|_\infty + \sup_{\lambda' \in \Lambda} \| A_{\cdot, \lambda'} v_s(\cdot, \lambda') \|_\infty < \infty \Big\} . \] 
These sets are nested: If $s > t > 0$, then $\mathcal{A}_{\infty,v_s}(\Lambda,d,\mu_c) \subseteq \mathcal{A}_{\infty,v_t}(\Lambda,d,\mu_c)$. Notice that in our case of $p=\infty$, the condition collapses to $2 \sup_{\lambda,\lambda' \in \Lambda} |A_{\lambda,\lambda'} | v_s(\lambda,\lambda') < \infty$, from which it follows that
\[ \mathcal{J}(\Lambda) = \bigcap_{s > 0} \mathcal{A}_{\infty,v_s}(\Lambda,d,\mu_c) .\]
By \cite[Example A.2]{sun2007}, the weight $v_s$ is $(\infty,2)$-admissible provided $s > d(G)+1$. Thus, all hypotheses are satisfied to apply \cite[Theorem 5.1]{sun2007}. 

Let $A$ be self-adjoint with closed range, and suppose that $A \in \mathcal{J}(\Lambda)$. Letting $H = (\mathrm{Ker} A)^{\perp}$ and letting $P$ be the orthogonal projection of $\ell^2(\Lambda)$ onto $H$, the closed range of $A$ implies that there is a constant $D > 0$ such that
\[ \| A c \| \geq D \| c \|, \quad c \in H. \]
Thus \cite[Theorem 5.1]{sun2007} gives that the Moore--Penrose inverse $A^\dagger$ is in $\mathcal{A}_{\infty,v_s}(\Lambda,d,\mu_c)$ for all $s > d(G)+1$. It follows that $A^\dagger \in \mathcal{J}(\Lambda)$, which finishes the proof.
\end{proof}

\section{Polynomial phase multiplier on Schwartz space}

In the proof of \zcref{thm:continuity-in-t}, we use the following lemma on polynomial phase multipliers. The fact that the Schwartz space is invariant under such operators was already shown in \cite[Lemma 3.1]{howe1977on}. 

\begin{lemma} \label{lem:howe}
Let $\alpha : \R^d \to \R$ be a polynomial. If $\mathcal{B} \subseteq \mathcal{S}(\R^d)$ is bounded and $t_0 > 0$, then
\[
e^{2\pi i t \alpha } f \to e^{2\pi i t_0 \alpha } f \quad \text{as} \quad t \to t_0
\]
in $\mathcal{S}(\R^d)$, uniformly for $f \in \mathcal{B}$.
\end{lemma}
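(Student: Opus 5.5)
We fix a Schwartz seminorm $q_{\beta,N}(g) := \sup_{x \in \R^d} (1+|x|)^N |\partial^\beta g(x)|$ and show that $q_{\beta,N}\big((e^{2\pi i t\alpha} - e^{2\pi i t_0 \alpha}) f\big) \to 0$ uniformly for $f \in \mathcal{B}$. Since the seminorms $q_{\beta,N}$ generate the topology of $\mathcal{S}(\R^d)$, this suffices.

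The first step is to expand $\partial^\beta\big((e^{2\pi i t\alpha} - e^{2\pi i t_0\alpha}) f\big)$ with the Leibniz rule. The result is a finite sum of terms $\partial^{\gamma}\big(e^{2\pi i t\alpha} - e^{2\pi i t_0\alpha}\big)\,\partial^{\beta-\gamma} f$ with $\gamma \le \beta$. By induction on $|\gamma|$, we have $\partial^\gamma e^{2\pi i t \alpha} = P_{\gamma,t}\, e^{2\pi i t\alpha}$. Here $P_{\gamma,t}(x) = \sum_{k} t^k Q_{\gamma,k}(x)$ is a polynomial in $x$ whose coefficients are polynomials in $t$, with the polynomials $Q_{\gamma,k}$ independent of $t$.

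The second step handles each term via the splitting
\[
P_{\gamma,t} e^{2\pi i t\alpha} - P_{\gamma,t_0} e^{2\pi i t_0\alpha} = (P_{\gamma,t} - P_{\gamma,t_0}) e^{2\pi i t \alpha} + P_{\gamma,t_0}\big(e^{2\pi i t\alpha} - e^{2\pi i t_0 \alpha}\big).
\]
For the first piece, restrict to $t$ in a compact interval $I$ around $t_0$. There we have $|P_{\gamma,t}(x) - P_{\gamma,t_0}(x)| \le c_\gamma |t-t_0| (1+|x|)^{m_\gamma}$. The resulting contribution is therefore bounded by $c_\gamma|t-t_0| \sup_{f\in\mathcal{B}} q_{\beta-\gamma, N+m_\gamma}(f)$, which tends to zero by boundedness of $\mathcal{B}$.

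The second piece is the main obstacle, because $|e^{2\pi i t\alpha(x)} - e^{2\pi i t_0\alpha(x)}|$ does not tend to zero uniformly in $x$. We use the same two-estimate trick as in the proof of \zcref{lem:cocycle-convergence}. This estimate is
\[
|e^{2\pi i t\alpha(x)} - e^{2\pi i t_0\alpha(x)}| \le \min\{2,\ 2\pi |t-t_0|\,|\alpha(x)|\} \le 2\pi |t-t_0| \, C_\alpha (1+|x|)^{\deg \alpha}.
\]
With this bound, the contribution is at most $2\pi C_\alpha |t-t_0| \sup_{f \in \mathcal{B}} q_{\beta-\gamma, N + m'_\gamma + \deg\alpha}(f)$, where $m'_\gamma$ is the degree of $P_{\gamma,t_0}$. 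This again tends to zero by boundedness of $\mathcal{B}$, so no cutoff into a compact ball is needed.

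Summing the finitely many Leibniz terms gives $q_{\beta,N}\big((e^{2\pi i t\alpha}-e^{2\pi i t_0\alpha})f\big) \le C |t-t_0|$ for all $t\in I$ and $f\in\mathcal{B}$. The constant $C$ depends only on $\beta$, $N$, $\alpha$, $I$ and finitely many seminorm bounds of $\mathcal{B}$. This yields the claimed uniform convergence.
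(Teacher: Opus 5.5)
Your proof is correct and is essentially the paper's argument: both prove a Lipschitz bound $C|t-t_0|$ for each Schwartz seminorm by trading the $t$-difference for a polynomial weight in $x$, which the boundedness of $\mathcal{B}$ absorbs. The only difference is the order of operations: the paper first writes $e^{2\pi i t\alpha}f - e^{2\pi i t_0\alpha}f = \int_{t_0}^{t} a_u f\,du$ with $a_u = 2\pi i\alpha e^{2\pi i u\alpha}$ and then bounds $p_{L,\beta}(a_u f)$ uniformly for $u$ in a compact interval via Leibniz, whereas you apply Leibniz first and estimate the two pieces of each term pointwise, which avoids differentiating under the $u$-integral.
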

\begin{proof}
Fix a compact interval $I \subseteq (0, \infty)$ with $t_0$ in its interior and let $t \in I$.
For fixed $t \in I$, we write $M_t f := e^{2\pi i t \alpha } f$. The continuity of each $M_t$ on $\mathcal{S}(\R^d)$ follows from \cite[Lemma 3.1]{howe1977on}.

For \(L\in\mathbb N_0\) and a multi-index \(\beta \in \N_0^d\), write
$
    p_{L,\beta}(f)
    =
    \sup_{x\in\R^d}
    (1+|x|)^L\lvert\partial^\beta f(x)\rvert
$
for the usual seminorms on $\mathcal{S}(\R^d)$.
For \(u\in I\), define
$
    a_u(x)
    :=
    2\pi i\alpha(x)e^{2\pi iu\alpha(x)}.
$
Then, given a multi-index \(\gamma \in \N_0^d\), it follows by induction that there exists a polynomial
\(P_{\gamma,u}\) in \(x\) such that
\[
    \partial^\gamma a_u(x)
    =
    P_{\gamma,u}(x)e^{2\pi iu\alpha(x)}.
\]
The coefficients of \(P_{\gamma,u}\) depend polynomially on \(u\).
Since \(I\) is compact, for every \(\gamma  \in \N_0^d\), there exists
\(C_\gamma>0\) and \(m_\gamma\in\mathbb N_0\) such that
\[
    \lvert \partial^\gamma a_u(x) \rvert
    \leq
    C_\gamma(1+|x|)^{m_\gamma},
    \qquad x\in\R^d, \; u\in I.
\]
Set $m := \max_{\gamma \leq \beta} m_{\gamma} $.
This estimate, together with Leibniz' rule, thus yields a constant $C>0$ such that
\begin{align} \label{eq:uniform_seminorm}
    p_{L,\beta}(a_u f)
    \leq
    C\sum_{\gamma\leq\beta}
    p_{L+m,\beta-\gamma}(f),
    \qquad u\in I.
\end{align}

Lastly, note that the fundamental theorem of calculus gives
\[
    M_t f -M_{t_0} f
    =
    \int_{t_0}^t
    2\pi i\alpha e^{2\pi iu\alpha} f\,du
    =
    \int_{t_0}^t a_u f \,du.
\]
The uniform estimate \eqref{eq:uniform_seminorm}, together with differentiation under the integral, therefore yields 
\[
\begin{aligned}
    p_{L,\beta}\bigl((M_t-M_{t_0})f \bigr)
    \leq
    \int_{\min\{t_0,t\}}^{\max\{t_0,t\}}
    p_{L,\beta}(a_u f)\,du               \leq
    C|t-t_0|
    \sum_{\gamma\leq\beta}
    p_{L+m,\beta-\gamma}(f).
\end{aligned}
\] 
Using that $\mathcal{B} \subseteq \mathcal{S}(\R^d)$ is bounded, it follows therefore that
\[
    \sup_{f \in B}
    p_{L,\beta}\bigl((M_t-M_{t_0}) f\bigr)
    \to 0
    \qquad \text{as} \quad t\to t_0.
\]
Since $L \in \mathbb{N}_0$ and $\beta \in \mathbb{N}_0^d$ were chosen arbitrarily, this completes the proof.
\end{proof}

\section*{Tool and computational resource disclosure}
Large language models (LLMs), specifically ChatGPT 5.6 Sol and ChatGPT 6 Astra, played an important role in the development of this work. Over the preceding years, the authors had made several attempts to prove \zcref{thm:main_intro}, including attempts to establish the stability of coherent frames and Riesz sequences under deformations of their point sets or the associated representations. The LLMs were first used to try to prove \zcref{thm:main_intro} using deformations of point sets, but these attempts did not yield suitable deformations beyond the setting of homogeneous groups. The LLMs were then used to develop a proof based on deforming the representations. This led to a nonexistence result for smooth Riesz bases, which we then realized could be strengthened to strict density inequalities for general coherent Riesz sequences. LLMs were subsequently also used to obtain a dual statement for coherent frames.

The initial proof of \zcref{thm:continuity-in-t} suggested by the LLMs was not based on induction and relied on a more brute-force approach than the current proof. To obtain a more conceptual proof, we examined the various known proofs of \zcref{thm:schwartz_matrix} with an eye to adapting them to the continuity claim of \zcref{thm:continuity-in-t}. We eventually used the LLMs to provide a proof of \zcref{thm:continuity-in-t} based on the ideas underlying the proof of \zcref{thm:schwartz_matrix} in \cite{corwin1996lp}. We then significantly rewrote the resulting argument to obtain the current proof. The proofs of the deformation results for frames and Riesz sequences in \zcref{sec:coherent} are based on initial proof strategies suggested by the LLMs, which we partly simplified using more elementary arguments or suitable references.

The authors have independently verified, validated, and rewritten all parts of the paper influenced by LLM-generated material and take full responsibility for the mathematical content
of the paper.

\section*{Acknowledgements}
For J.v.V., this research was funded in whole or in part by the Austrian Science Fund
(FWF): 10.55776/PAT2545623. For open access purposes, the author has applied a CC
BY public copyright license to any author-accepted manuscript version arising from this
submission.

\bibliographystyle{abbrv}
\bibliography{bib}

\end{document}